\newtheorem{theorem}{Theorem}[section]
\newtheorem{lm}[theorem]{Lemma}
\newtheorem{tr}[theorem]{Theorem}
\newtheorem{cor}[theorem]{Corollary}
\newtheorem{rem}[theorem]{Remark}
\newtheorem{pr}[theorem]{Proposition}
\newtheorem{question}[theorem]{Question}
\newcommand{\cal}{\mathcal}
\begin{document}
\title{$GL(m|n)$-supermodules with good and Weyl filtrations}
\author{Alexandr N. Zubkov}
\address{Omsk State Pedagogical University, Chair of Mathematical Analysis, Algebra and Geometry, 644099 Omsk-99, Tuhachevskogo Embankment 14, Russia}
\email{a.zubkov@yahoo.com}
\begin{abstract}
The purpose of this paper is to prove necessary and sufficient criteria for a $GL(m|n)$-supermodule to have a good or Weryl filtration.
We also introduce the notion of a Steinberg supermodule analogous to the classical notion of Steinberg module. We prove that the Steinberg supermodule inherits some properties of the Steinberg module. Some new series of finite-dimensional tilting supermodules are found. 
\end{abstract}
\maketitle

\section*{Introduction}

Modules with costandard (good) or standard (Weyl) filtrations and tilting modules as well, play a crucial role in the representation theory of finite-dimensional algebras
and reductive algebraic groups over an algebraically closed field of positive characteristic. The concept of a tilting module can be adapted 
to any highest weight category $\cal{C}$ in the sense of \cite{cps}. It has been proven in \cite{cps} that if the partially ordered set of weights ({\it weight poset}) 
$\Lambda$ of such category $\cal{C}$ is finite, then 
the full subcategory consisting of all objects of finite length is equivalent to a category of finite-dimensional modules over
a quasi-hereditary algebra (a nice introduction to the theory of quasi-hereditary algebras and tilting modules over them
can be found in \cite{don0}; see also \cite{don2, ring}). In many cases, like Schur algebras or rational representations  of reductive algebraic groups, 
$\Lambda$ is not finite but any finitely generated ideal $\Gamma$ of the poset $\Lambda$ is. In that case, the full subcategory of finite objects belonging to $\Gamma$ is 
equivalent to a category of finite-dimensional modules over a quasi-hereditary algebra.

It was proven in \cite{zub1} that the category $GL(m|n) - \mathsf{Smod}$ of left rational supermodules
over the general linear supergroup $GL(m|n)$ is a highest weight category. However, the poset of weights $\Lambda$ of $GL(m|n)$ does not satisfy the condition that every finitely-generated ideal $\Gamma$ of $\Lambda$ is finite. In fact, for every weight $\lambda\in\Lambda$, the interval $\{\mu |\mu\leq\lambda \}$ is infinite. 

In \cite{zubmar} it was suggested how to overcome this obstacle. Since a highest weight category is an abelian category of finite type,  it can be regarded as a right 
comodule category over a coalgebra. Equivalently, it can
be viewed as a left discrete module category over a {\it pseudocompact} algebra. If $\Gamma$ 
is finitely generated (or finitely cogenerated, respectively) {\it good} (or {\it cogood}, respectively) ideal (see Definitions 3.9 and 3.17 of \cite{zubmar}), then 
the corresponding highest weight category is equivalent to a category
of discrete modules over an {\it ascending} (or {\it descending}, respectively) quasi-hereditary pseudocompact
algebra. The theory of objects with good ({\it decreasing costandard}) or Weyl ({\it increasing standard}) filtration 
has been developed in \S4 of \cite{zubmar}. The notion of a tilting object was introduced there and it was proven that
the category of discrete modules over an ascending quasi-hereditary peseudocompact algebra has enough indecomposable tilting objects. 
However, if $\Gamma$ is infinite, it is not clear if the tilting objects are finite or not. It is natural to look for a sufficient condition on the highest
weight category $\cal{C}$ that would guarantee that all of its indecomposable tilting objects are finite.

It makes sense to investigate first the category $GL(m|n)-\mathsf{Smod}$.
The costandard and standard objects in $GL(m|n)-\mathsf{Smod}$ are (up to a parity shift) the induced supermodules $H^0(\lambda)$ and the Weyl supermodules $V(\lambda)$, respectively 
(cf. \cite{zub1}). If $H^0(\lambda)$ is irreducible, then 
$H^0(\lambda)=V(\lambda)$ is tilting. 
Irreducible induced supermodules $H^0(\lambda)$ were characterized in \cite{marko}. Until recently, no other examples of tilting supermodules in $GL(m|n)-\mathsf{Smod}$ 
has been known, except in the case of the supergroup $GL(1|1)$ or in the case when $char K=0$. In both cases every indecomposable tilting supermodule is injective,  
projective and finite-dimensional; see \cite{bkn, zubmar}.  

In this article we present a series of finite-dimensional (indecomposable) tilting supermodules, which are not irreducible, using the concept of a Steinberg supermodule. 
We prove that the Steinberg supermodule inherits some properties of 
its counterpart, a Steinberg module $St_r$. For example, a Steinberg supermodule remains both projective and injective when regarded as a supermodule over the corresponding Frobenius kernel in $GL(m|n)$. 

It was proven in \cite{don3} (see also II.10.5.2 (1) of \cite{jan}) that, for every (dominant) weight $\lambda$, the tensor product
$St_r\otimes I(\lambda)^{[r]}$ is isomorphic to $I((p^r -1)\rho +p^r\lambda)$, where $I(\mu)$ is the injective envelope of
an irreducible module $L(\mu)$ of the highest weight $\mu$. We prove an analogous statement for Steinberg supermodules. More specifically, the tensor product of the $r$-th Steinberg supermodule with the $r$-th {\it even} Frobenius twist of an indecomposable injective module over 
$GL(m|n)_{res}$ is both injective and indecomposable. The most difficult part of the proof is to show that this tensor product 
is injective. To prove it, we use the aforementioned fact that the Steinberg supermodule is injective over the corresponding Frobenius kernel and some spectral sequence arguments 
(see Lemma \ref{strangelemma} and Theorem \ref{tensorproductisinjective}).
Our series of tilting supermodules is obtained in a similar way; they are tensor products of Steinberg supermodules with even Frobenius twists of 
tilting modules over $GL(m|n)_{res}$. This generalizes Proposition (2.1) from \cite{don2} for general linear supergroups.
 
The paper is organized as follows. In the first seven sections and in the ninth section we give all necessary definitions, notations and derive auxiliary  results. In the eighth section we prove two criteria for a $GL(m|n)$-supermodule to have a decreasing good or increasing Weyl filtration (both possibly infinite). The results of this section are interesting on their own and they are used in the last section to describe completely all cases when a symmetric or an exterior (super)power of the standard $GL(m|n)$-supermodule $W$ have a good or Weyl filtration. In the tenth section we introduce the notion of a Steinberg supermodule and prove some of its important properties mentioned earlier. 

\section{Hopf superalgebras}

We follow definitions and notations from \cite{maszub}. For the convenience of the reader we will recall some of them here.
Let $A$ be an (associative) superalgebra. The category of left (or right, respectively) $A$-supermodules (with ungraded morphisms) is denoted by $_{A}\mathsf{SMod}$
(or by $\mathsf{SMod}_A$, respectively). If $A$ is a supercoalgebra, then the category of left (or right, respectively) $A$-supercomodules (with ungraded morphisms) is denoted by $^{A}\mathsf{SMod}$ (or by $\mathsf{SMod}^A$, respectively). These categories are not abelian, but their {\it underlying even categories} $_{A}\mathsf{Smod}, \mathsf{Smod}_A, 
^{A}\mathsf{Smod},$ $\mathsf{Smod}^A$, consisting of the same objects but considered only with even morphisms, are. 

Let $H$ be a Hopf superalgebra. A Hopf subsuperalgebra $R$ of $H$ is called {\it right normal} if $\sum (-1)^{|r||h_1|}s_H(h_1)rh_2\in R$ for all $r\in R$ and $h\in H$
(cf. \cite{amas}). Symmetrically, $R$ is called {\it left normal} if $\sum (-1)^{|r||h_2|}h_1 rs_H(h_2)\in R$ for all $r\in R$ and $h\in H$. 
If $R$ is both left and right normal, then $R$ is called just {\it normal}. 
If $H$ is supercocommutative and the antipode $s_H$ is bijective, then the right normality is equivalent to the left normality.
 
Let $M$ be a left $H$-supermodule. Denote by $M_R$ a subsuperspace of $M$ that is generated by the elements
$rm$ for $m\in M$ and $r\in R^+ =\ker\epsilon_R$.

\begin{lm}\label{normalhopf}
If $R$ is left normal in $H$, then $M_R$ is a $H$-subsupermodule.
\end{lm}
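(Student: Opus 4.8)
The plan is to reduce the statement to the inclusion $HR^{+}\subseteq R^{+}H$ of subsuperspaces of $H$. Granting this, since $M_{R}$ — being spanned by the homogeneous products $rm$ with $r\in R^{+}$, $m\in M$ — coincides with $R^{+}M$ and is automatically a subsuperspace, we get
\[
H\cdot M_{R}=H\,(R^{+}M)=(HR^{+})\,M\subseteq (R^{+}H)\,M=R^{+}(HM)\subseteq R^{+}M=M_{R},
\]
so that $M_{R}$ is an $H$-subsupermodule, as desired.

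To prove $HR^{+}\subseteq R^{+}H$, fix homogeneous $h\in H$ and $r\in R^{+}$. From $\sum h_{1}\epsilon(h_{2})=h$ together with the antipode identity $\sum s_{H}(c_{1})c_{2}=\epsilon(c)1$ one obtains the sign-free formula
\[
hr=\sum h_{1}\,r\,s_{H}(h_{2})\,h_{3},
\]
where $\sum h_{1}\otimes h_{2}\otimes h_{3}$ denotes the iterated (double) coproduct of $h$; no signs occur because $\epsilon$ is scalar-valued and $s_{H}$ is even. Writing $h_{1}\otimes h_{2}=\Delta(h')$ and $h_{3}=h''$ (so that $\Delta(h)=\sum h'\otimes h''$), the inner factor is $\sum_{(h')}h'_{1}\,r\,s_{H}(h'_{2})$. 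If this coincided with the left adjoint action $\operatorname{ad}_{\ell}(h')(r)=\sum_{(h')}(-1)^{|r||h'_{2}|}h'_{1}\,r\,s_{H}(h'_{2})$ we would be finished: that element lies in $R$ by left normality, hence in $R^{+}$ since $\epsilon_{R}(r)=0$, and therefore $\sum_{(h)}(-1)^{|r||h_{2}|}h_{1}\,r\,s_{H}(h_{2})\,h_{3}=\sum_{(h)}\operatorname{ad}_{\ell}(h')(r)\,h''\in R^{+}H$. When $r$ is even the factor $(-1)^{|r||h_{2}|}$ is trivial, the two expressions are literally equal, and we conclude $hr\in R^{+}H$; the same argument disposes of the part of the sum in which $h_{2}$ is even.

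The genuine content is therefore the remaining part of the sum: the terms in which $r$ and $h_{2}$ are both odd, i.e.\ showing $\sum_{h_{2}\ \mathrm{odd}}h_{1}\,r\,s_{H}(h_{2})\,h_{3}\in R^{+}H$. This sign bookkeeping is the main obstacle. I would attack it by refining the computation according to the parity of the middle leg of $\Delta^{2}(h)$, using that the parity automorphism $\sigma$ of $H$ (with $\sigma(x)=(-1)^{|x|}x$) is a Hopf automorphism stabilising $R$, and reducing the odd contribution — modulo summands in which $r$ stands to the \emph{left} of an element of $H$, which belong to $R^{+}H$ for trivial reasons — back to the inclusion $\operatorname{ad}_{\ell}(H)(R)\subseteq R$. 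The mechanism is already transparent in two model cases: when $H$ is supercommutative (then $\operatorname{ad}_{\ell}(h')(r)=\epsilon(h')r$, left normality is automatic, and $\sum_{(h')}h'_{1}\,r\,s_{H}(h'_{2})$ is, up to a sign, $r$ times an element of $H$, hence in $R^{+}H$), and when $H$ is generated by group-likes and primitives (where one induces on word length using $\eta r=\operatorname{ad}_{\ell}(\eta)(r)+(-1)^{|\eta||r|}r\eta$ for a primitive $\eta$, the first summand lying in $R^{+}$ and the second in $R^{+}H$); the general case follows by the same parity argument.
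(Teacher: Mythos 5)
Your reduction to $HR^{+}\subseteq R^{+}H$ and the sign-free identity $hr=\sum h_{1}\,r\,s_{H}(h_{2})\,h_{3}$ are both correct, and you have put your finger on a real subtlety: the unsigned inner factor is not the signed adjoint action occurring in the definition of left normality. But your argument is not complete, and the route you propose for the remaining case $|r|=1$ would not close it. Left normality only controls the \emph{full} sums $\sum(-1)^{|r||a_{2}|}a_{1}\,r\,s_{H}(a_{2})$ for $a\in H$, never their pieces in which the second leg has a fixed parity; and twisting $a$ by the parity automorphism $\sigma$ changes this element only by the overall factor $(-1)^{|a|}$, because $|a_{1}|+|a_{2}|\equiv|a|$ in every homogeneous summand of $\Delta(a)$, so $\sigma$ cannot be used to separate the middle-leg parities. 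For the same reason even your claim that ``the same argument disposes of the part of the sum in which $h_{2}$ is even'' is unjustified: a partial sum over a parity-restricted middle leg is not of the form $\mathrm{ad}_{\ell}(a)(r)$ for any $a$. Finally, your two model cases do not imply the general statement (a general Hopf superalgebra is neither supercommutative nor generated by group-likes and primitives), so the decisive step is missing.

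The gap is closed not by splitting parities but by placing the compensating sign on the right-hand factor. For homogeneous $h\in H$ and $r\in R^{+}$ one has
\[
hr\;=\;\sum (-1)^{|r||h_{2}|}\Bigl(\sum (-1)^{|r||h_{1,2}|}\,h_{1,1}\,r\,s_{H}(h_{1,2})\Bigr)h_{2},
\]
where $\Delta(h)=\sum h_{1}\otimes h_{2}$ and $\Delta(h_{1})=\sum h_{1,1}\otimes h_{1,2}$. This is verified exactly as you verified the sign-free identity: after regrouping the last two legs, the sign attached to a homogeneous $h_{2}$ is constant, equal to $(-1)^{|r||h_{2}|}$, on all of $\Delta(h_{2})$, so one may apply $\sum s_{H}(a_{1})a_{2}=\epsilon(a)1$; the surviving terms have $\epsilon(h_{2})\neq 0$, hence $h_{2}$ even and sign $+1$, giving $\sum h_{1}\epsilon(h_{2})r=hr$. (Check it on an odd primitive $h$ and odd $r$: the unsigned decomposition through the adjoint action gives $hr+2rh$, the signed one gives $hr$.) Each inner factor lies in $R$ by left normality and in $\ker\epsilon$ because $\epsilon(r)=0$, hence in $R^{+}$; the outer sign is a scalar, so $hr\in R^{+}H$ and $(hr)m\in M_{R}$, as required. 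This signed identity is what the paper's one-line proof intends (its displayed formula is itself off by the factor $(-1)^{|r||h_{3}|}$), so once repaired this way your argument coincides with the paper's.
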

\begin{proof}
For every $r\in R^+$ we have 
$$(hr)m=\sum(\sum (-1)^{|r||h_2|} h_1 r s_H(h_2)) h_3 m\in M_R .$$ 
\end{proof}

Let $R$ be a finite-dimensional Hopf superalgebra. The dual superspace $R^*$ has a natural structure of Hopf superalgebra given by
\[(\phi\psi)(r)=\sum (-1)^{|\psi||r_1|}\phi(r_1)\psi(r_2), \mbox{ where } \phi, \psi\in R^*, r\in R \mbox { and } \Delta_R(r)=\sum r_1\otimes r_2;\]
$\Delta_{R^*}(\phi)=\sum\phi_1\otimes\phi_2$ whenever 
$\phi(rs)=\sum(-1)^{|\phi_2||r|}\phi_1(r)\phi_2(s)$ for every $r, s\in R$;
$s_{R^*}(\phi)(r)=\phi(s_R(r))$, and $\epsilon_{R^*}(\phi)=\phi(1_R)$.
\begin{lm}\label{self-duality}
The functor $\Phi : R\to R^*$ is a self-duality on the category of finite-dimensional Hopf superalgebras.  
\end{lm}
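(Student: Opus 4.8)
The plan is to establish the three ingredients of a self-duality: that $\Phi$ extends to a contravariant functor on the category of finite-dimensional Hopf superalgebras, that $\Phi\circ\Phi$ is naturally isomorphic to the identity functor, and that these two facts together make $\Phi$ an anti-equivalence.

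First I would define $\Phi$ on morphisms. On objects it is already given by the formulas preceding the statement, which equip $R^*$ with a Hopf superalgebra structure, and $\dim R^*=\dim R<\infty$, so $\Phi$ does take values in the same category. For a morphism $f\colon R\to S$ of Hopf superalgebras put $\Phi(f)=f^*\colon S^*\to R^*$, $f^*(\phi)=\phi\circ f$. One checks successively that $f^*$ is even; that it is multiplicative because $f$ is comultiplicative; that it is comultiplicative because $f$ is multiplicative; that it is unital and counital because $f$ is; and that it intertwines $s_{S^*}$ with $s_{R^*}$ because $f$ intertwines $s_R$ with $s_S$. Each of these is a one-line computation using only the explicit formulas for the structure maps of the dual. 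Contravariant functoriality, $\Phi(\mathrm{id}_R)=\mathrm{id}_{R^*}$ and $\Phi(g\circ f)=\Phi(f)\circ\Phi(g)$, is immediate from $\phi\mapsto\phi\circ f$.

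Next I would exhibit the natural isomorphism $\eta\colon\mathrm{Id}\Rightarrow\Phi\circ\Phi$. Since $R$ is finite-dimensional, the evaluation map $\eta_R\colon R\to R^{**}$ given on homogeneous elements by $\eta_R(r)(\phi)=(-1)^{|r||\phi|}\phi(r)$ is a linear isomorphism of superspaces. It remains to check that $\eta_R$ is a homomorphism of Hopf superalgebras — that it respects products, unit, coproducts, counit and antipode — and that $(\eta_R)_R$ is natural, i.e.\ $\eta_S\circ f=f^{**}\circ\eta_R$ for every $f$. The naturality is automatic from the definition of $f^{**}$, and the compatibility with each structure map is obtained by unwinding the definition of the structure maps first on $R^*$ and then on $R^{**}$, writing $\Delta_R(r)=\sum r_1\otimes r_2$ and reducing the resulting Koszul-sign exponents modulo $2$. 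Once $\eta$ is available, $\Phi$ is a contravariant endofunctor with $\Phi\circ\Phi\simeq\mathrm{Id}$, hence fully faithful and essentially surjective; that is precisely what it means for $\Phi$ to be a self-duality.

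The only genuine work is the bookkeeping of signs: one has to verify that the sign $(-1)^{|\psi||r_1|}$ in the product on $R^*$, the sign $(-1)^{|\phi_2||r|}$ in its coproduct, and the sign $(-1)^{|r||\phi|}$ in $\eta_R$ are mutually consistent, so that every diagram in sight commutes on the nose and not merely up to a sign. I expect this verification — routine but unforgiving — to be the main obstacle; no conceptual difficulty arises once the conventions are pinned down.
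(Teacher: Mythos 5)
Your proposal is correct and is essentially the argument the paper intends: its proof consists of the single instruction to follow I.8(1) of \cite{jan}, which is precisely the standard dualization argument (transpose on morphisms, evaluation map $R\to R^{**}$ as the natural isomorphism) that you spell out, with the only new content in the super setting being the Koszul-sign bookkeeping you identify.
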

\begin{proof}
Follow the arguments in I.8(1) of \cite{jan}.
\end{proof}
Additionally, if $M$ is a left $R$-supermodule, then $M$ is a right $R^*$-supercomodule via the linear map
$M\to Hom_K(R, M)\simeq M\otimes R^*$ which maps $m\in M$ to $l_m : R\to M$ given by $l_m(r)=(-1)^{|m||r|}rm$ for $r\in R$.
Symmetrically, if $M$ is a right $R$-supercomodule, then $M$ is a left $R^*$-supermodule by 
$\phi m=\sum(-1)^{|\phi||m_1|}\phi(r_2)m_1$, where $\tau_M(m)=\sum m_1\otimes r_2$, $\phi\in R^*$ and $m\in M$.
Furthemore, all of the above statements remain valid after replacing the right coaction by the left action and vice-versa.

The proof of the following lemma is easy and is left for the reader.
\begin{lm}\label{equivalence}
There is an equivalence of categories $\mathsf{SMod}^R\simeq {}_{R^*}\mathsf{SMod}$ that preserves parities of morphisms.
Symmetrically, ${}^R\mathsf{SMod}\simeq \mathsf{SMod}_{R^*}$.
\end{lm}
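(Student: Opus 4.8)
The plan is to build the claimed equivalence $\mathsf{SMod}^R \simeq {}_{R^*}\mathsf{SMod}$ explicitly, using exactly the two constructions already recorded just before the statement, and then to check that they are mutually quasi-inverse and parity-preserving. On objects, send a right $R$-supercomodule $(M,\tau_M)$ to the left $R^*$-supermodule $M$ with action $\phi m=\sum(-1)^{|\phi||m_1|}\phi(r_2)m_1$, where $\tau_M(m)=\sum m_1\otimes r_2$; conversely, send a left $R^*$-supermodule $M$ to the right $R$-supercomodule whose structure map $M\to M\otimes R$ comes from $M\to \mathrm{Hom}_K(R^*,M)\simeq M\otimes R^{**}\simeq M\otimes R$, using that $R$ is finite-dimensional so $R^{**}\simeq R$ canonically. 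First I would verify that each of these assignments indeed lands in the target category: that the proposed $R^*$-action satisfies $(\phi\psi)m=\phi(\psi m)$ and $\epsilon_{R^*}m=m$ (this uses coassociativity and counitality of $\tau_M$ together with the definitions of $\Delta_{R^*}$ and $\epsilon_{R^*}$, and is where the sign factors must be tracked carefully), and dually that the recovered coaction is coassociative and counital.

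Next I would define the functors on morphisms simply as the identity on the underlying linear maps: an even or odd $K$-linear map $f:M\to N$ that is $R$-colinear is automatically $R^*$-linear for the induced actions, and vice versa, because both structure maps are defined by the same universal formula in terms of $\tau$ (resp.\ the action). This makes it transparent that parities of morphisms are preserved, which is the part of the statement that needs to be watched in the super setting. Then I would check that the two composites are naturally isomorphic to the respective identity functors; this reduces to the finite-dimensional double-dual identity $R\simeq R^{**}$ and the observation — essentially the content of the pairing $R^*\times R\to K$ being perfect — that recovering a coaction from its dualized action (or an action from its co-dualized coaction) returns the original map on the nose, not merely up to isomorphism. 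So the natural transformations here are in fact identities once the canonical identification $R\simeq R^{**}$ is fixed.

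The symmetric statement ${}^R\mathsf{SMod}\simeq\mathsf{SMod}_{R^*}$ is handled by the same argument with left and right interchanged, as the text already flags that all the preceding constructions remain valid after swapping sides; alternatively one can deduce it from the first equivalence by applying the self-duality $\Phi:R\to R^*$ of Lemma \ref{self-duality} together with the standard opposite-superalgebra/super-coalgebra dictionary. I do not expect a genuine obstacle here — the lemma is, as the author says, easy — but the one place demanding care is the bookkeeping of Koszul signs: verifying associativity of the $R^*$-action requires expanding $\phi(\psi m)$ with $\tau_M$ applied twice, and one must confirm that the sign $(-1)^{|\psi||r_1|}$ appearing in the definition of $\Delta_{R^*}$ matches the sign produced by commuting $\psi$ past the first tensor leg of $\tau_M(m)$. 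Once that single compatibility is confirmed, everything else is formal.
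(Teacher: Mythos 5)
Your proposal is correct and is exactly the argument the paper intends: the author leaves the proof to the reader, the equivalence being given by the dualization formulas recorded immediately before the statement, with finite-dimensionality of $R$ (so that $R\simeq R^{**}$ canonically) making the two assignments mutually inverse and the identity-on-underlying-maps treatment of morphisms giving parity preservation. The only point to watch, which you already flag, is the Koszul bookkeeping: an odd colinear map $f$ yields $\phi f(m)=(-1)^{|\phi||f|}f(\phi m)$, which is precisely the sign convention for odd morphisms of supermodules, so the check goes through.
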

In particular, the category $\mathsf{SMod}^R$ has enough projective objects. Thus for any $M, N\in 
\mathsf{SMod}^R$ we have 
$Ext^n_{\mathsf{SMod}^R}(M, N)\simeq H^n(Hom_{\mathsf{SMod}^R}(P^{\bullet}_M, N))$, where $P^{\bullet}_M\to M\to 0$ is a projective resolution of $M$ 
(cf. Proposition 8.2 in XX of \cite{lang}). Moreover, the resolution $P^{\bullet}_M$ can be chosen in
$\mathsf{Smod}^R$.

\section{Algebraic supergroups and their linear representations}

Let $\mathsf{SAlg}_K$ denote the category of supercommutative $K$-superalgebras with even morphisms. Let $B$ be a supercommutative Hopf superalgebra. Then the representable functor $A\to Hom_{\mathsf{SAlg}_K}(B, A)$ from $\mathsf{SAlg}_K$ to the category of sets is a natural group 
functor. It is denoted by $G=SSp \ B$ and called an {\it affine supergroup}. If $B$ is finitely generated, then $G$ is called
an {\it algebraic supergroup}. Any (closed) subsupergroup $H$ of $G$ is uniquely defined by a Hopf superideal $I_H$ of $B$
such that an element $g\in G(A)$ belongs to $H(A)$ if and only if $g(I_H)=0$. For example, the {\it largest even subsupergroup} 
$G_{ev}$ of $G$ corresponds to the ideal $BB_1$. The restriction of $G_{ev}$ to the full subcategory of commutative $K$-algebras
is denoted by $G_{res}$. 

Let $W$ be a finite-dimensional superspace. The group functor $A\to \mathsf{End}_A(W\otimes A)_{0}^*$ is an algebraic supergroup. It is called a {\it general linear supergroup} and denoted by $GL(W)$. If $\dim W_0=m, \dim W_1=n$, then $GL(W)$ is also denoted by $GL(m|n)$. 

Fix a homogeneous basis of $W$, say consisting of $w_i$ for $1\leq i\leq m+n$, where $|w_i|=0$ provided $1\leq i\leq m$, and $|w_i|=1$ otherwise.
It is easy to see that $K[GL(m|n)]$ is a localization of $K[c_{ij}|1\leq i, j\leq m+n]$, where $|c_{ij}|=|w_i|+|w_j|$, by some element $d$.
More precisely, the generic matrix $C=(c_{ij})_{1\leq i, j\leq m+n}$ has a block form 
$$\left(\begin{array}{cc}
C_{00} & C_{01} \\
C_{10} & C_{11}
\end{array}\right),
$$
where the $m\times m$ and $n\times n$ blocks $C_{00}$ and $C_{11}$ are even, and the $m\times n$ and $n\times m$ blocks
$C_{01}$ and $C_{10}$ are odd. Then  
$$\Delta_{GL(m|n)}(c_{ij})=\sum_{1\leq k\leq m+n} c_{ik}\otimes c_{kj}, \ \epsilon_{GL(m|n)}(c_{ij})=\delta_{ij},$$
and $d=\det(C_{00})\det(C_{11})$. 

The element $Ber(C)=\det(C_{00}-C_{01}C_{11}^{-1}C_{10})\det(C_{11})^{-1}$ is called the {\it Berezinian}.
It is a group-like element of the Hopf superalgebra $K[GL(m|n)]$ (cf. \cite{ber}). It is easy to show that
$K[GL(m|n)]=K[c_{ij}|1\leq i, j\leq m+n]_{Ber(C)}$.

By definition, the category of left (or right, respectively) $G$-supermodules coincides with the category of right (or left, respectively) $K[G]$-supercomodules. 
Denote them by $G-\mathsf{SMod}$ and $\mathsf{SMod}-G$ respectively. The corresponding even underlying categories are denoted by
$G-\mathsf{Smod}$ and $\mathsf{Smod}-G$, respectively. For example, the right supercomodule structure of $GL(W)$-supermodule $W$ is defined by
$$\tau_W(w_i)=\sum_{1\leq j\leq m+n} w_j\otimes c_{ji}.$$

There is an endofunctor $M\to \Pi M$, called the {\it parity shift}, in all above categories such that $\Pi M$ coincides with $M$ as a $K[G]$-comodule and 
$(\Pi M)_i=M_{i+1}$ for $i\in\mathbb{Z}_2$.  

Every $G_{res}$-module can be regarded as a {\it purely even} $G_{ev}$-supermodule. Furthemore, $G_{res}-\mathsf{mod}$
is a full subcategory of both $G_{ev}-\mathsf{Smod}$ and $G_{ev}-\mathsf{SMod}$ and
$Ext^n_{G_{res}}(M, N)=Ext^n_{G_{ev}}(M, N)$ for every $M, N\in G_{res}-\mathsf{mod}$ and $n\geq 0$.

There is a one-to-one correspondence between $G$-supermodule structures on a finite-dimensional superspace $M$ and {\it linear representations}
$G\to GL(M)$ (cf. \cite{jan, zub1}). If $\tau_M(m)=\sum m_1\otimes f_2\in M\otimes K[G]$, then $g\in G(A)$ acts on $M\otimes A$ by the even 
$A$-linear automorphism $\tau(g)(m\otimes 1)=\sum m_1\otimes g(f_2)$. 
In other words, $M$ is a $G$-supermodule if and only if the group functor $G$ acts on the functor $M_a=M\otimes ?$ in such a way that for every $A\in\mathsf{SAlg}_K$ 
the group $G(A)$ acts on $M_a(A)=M\otimes A$ by even $A$-linear automorphisms. 

A subsupergroup $H$ is called {\it (faithfully) exact} in $G$ if the induction functor $ind^G_H$ is (faithfully) exact.
The exactness of $H$ is equivalent to the condition that the restriction functor $M\to M|_H$ takes injectives to injectives
(cf. \cite{jan, zub1}). Besides, $H$ is faithfully exact if and only if $G/H$ is an affine superscheme (see Theorem 5.2 of \cite{zub2}). 

In what follows, we use the following criterion for $G/H$ to be an affine superscheme (cf. Corollary 8.15 of \cite{maszub}).
A superscheme $G/H$ is affine if and only if $G_{ev}/H_{ev}$ is affine if and only if $G_{res}/H_{res}$ is affine. 
For example, if $H$ is a normal or finite subsupergroup of $G$, then $G/H$ is affine, hence $H$ is faithfully exact in $G$.

Let $B$ denote a {\it standard Borel subsupergroup} of $G=GL(W)$ that consists of all lower triangular matrices. Let $V$ be its largest unipotent subsupergroup that consists of all lower triangular matrices with units on their diagonals. 

Let $T$ be a maximal torus of $B$ consisting of all diagonal matrices and let $X(T)$ be its {\it character group}.
The group $X(T)$ can be naturally identified with $\mathbb{Z}^{m+n}$ so that an element $\lambda\in X(T)$ has a form
$(\lambda_1, \ldots , \lambda_{m+n})$, where $\lambda_i\in\mathbb{Z}$ for $1\leq i\leq m+n$. Let $\epsilon_i$ denote a character
$$(0, \ldots, \underbrace{1}_{i-\mbox{th place}}, \ldots, 0) \mbox{ for } 1\leq i\leq m+n .$$

Let $B^{opp}$ denote the transpose of $B$,
that is $B^{opp}$ is a Borel subsupergroup consisting of all upper triangulat matrices. Then the transpose $V^{opp}$ of $V$ is the largest unipotent
subsupergroup of $B^{opp}$.

Any simple $B$-supermodule has the form $K_{\lambda}$ or $\Pi K_{\lambda}$ for some $\lambda\in X(T)$, where the $K[B]$-comodule structure
of $K_{\lambda}$ is given by $1\mapsto 1\otimes\prod_{1\leq i\leq m+n} c_{ii}^{\lambda_i}$. 
The {\it induced supermodule} $ind^G_B \Pi ^a K_{\lambda}$ is denoted by $H^0(\lambda^a)$ for $a=0, 1$. Observe that 
$H^0(\lambda^a)=\Pi^a H^0(\lambda)\neq 0$ if and only if $\lambda$ is a {\it dominant} weight, that is $\lambda_1\geq\ldots\geq\lambda_m, \lambda_{m+1}\geq\ldots\geq\lambda_{m+n}$. If it is the case, then $H^0(\lambda^a)$ has a simple socle $L(\lambda^a)=\Pi^a L(\lambda)$ and any irreducible $GL(W)$-supermodule is isomorphic to the socle of some
$H^0(\lambda^a)$. The set of dominant weights is denoted by $X(T)^+$.
 
As it has been shown in
\cite{zub1}, the even category of $GL(W)$-supermodules is a highest weight category with $H^0(\lambda^a)$ as costandard objects, subject to a {\it Bruhat-Tits order} such that $\mu\leq\lambda$ if and only if 
$$\sum_{1\leq i\leq k}\mu_i\leq \sum_{1\leq i\leq k}\lambda_i \mbox{ for } 1\leq i\leq m+n-1, \mbox{ and } \sum_{1\leq i\leq m+n}\mu_i =\sum_{1\leq i\leq m+n}\lambda_i$$
if and  only if
$$\lambda-\mu\in \sum_{1\leq i < j\leq m+n}\mathbb{Z}_{\geq 0}(\epsilon_i -\epsilon_j).$$
The standard object of the highest weight $\lambda$ in $GL(W)-\mathsf{Smod}$ is denoted by $V(\lambda)$. 

Let $\Gamma$ be finitely generated ideal in $X(T)^+$. We say that a $GL(W)$-supermodule $M$ belongs to $\Gamma$ whenever any composition factor of $M$
is isomorphic to some $L(\lambda^a)$ with $\lambda\in\Gamma$. For any $GL(W)$-supermodule $N$ there exists a unique maximal subsupermodule of $N$ which belongs to $\Gamma$. It is denoted by $O_{\Gamma}(N)$. Also, there exists a unique minimal subsupermodule $N'$ of $N$ such that $N/N'$ belongs to $\Gamma$. The subsupermodule $N'$ is denoted by $O^{\Gamma}(N)$.
For notions of (decreasing) good and (increasing) Weyl filtrations
we refer the reader to \S 4 of \cite{zubmar}. 

Let $H_{ev}(\lambda)$ and $V_{ev}(\lambda)$ denote costandard and standard objects in the highest weight category $GL(W)_{res}-\mathsf{mod}$, respectively (subject to the same partial order). Then the category $GL(W)_{ev}-\mathsf{Smod}$ is also a highest weight category with costandard objects
$\Pi^a H_{ev}(\lambda)$ and standard objects $\Pi^a V_{ev}(\lambda)$
for $a=0, 1$. 

\section{Superalgebras of distributions}

Let $G$ be an algebraic supergroup and $Dist(G)$ be the {\it superalgebra of distributions} of $G$. As a superspace $Dist(G)$ coincides with
$\bigcup_{k\geq 0} Dist_k(G)\subseteq K[G]^*$, where $Dist_k(G)=(K[G]/\mathfrak{m}^{k+1})^*$ and $\mathfrak{m}=\ker\epsilon_{K[G]}=K[G]^+$. $Dist(G)$ is a supercocommutative Hopf superalgebra with a bijective antipode (see \cite{jan, zub2} for more details). For example, the comultiplication $\Delta_{Dist(G)}$ maps an element $\phi\in Dist_k(G)$
to $\sum \phi_1\otimes\phi_2\in Dist_k(G)^{\otimes 2}$ if and only if
$$\phi(fh)=\sum (-1)^{|\phi_2||f|}\phi_1(f)\phi_2(h)$$ for every $f, h\in K[G]$.

If $H$ is a subsupergroup of $G$, then $Dist(H)$ is a Hopf subsuperalgebra of $Dist(G)$. More precisely,
$\phi\in Dist(G)$ belongs to $Dist(H)$ if and only if $\phi(I_H)=0$.
Furthemore, if $H$ is a normal subsupergroup of $G$, then  
for every $f\in I_N$ we have $\sum (-1)^{|f_1||f_2|} f_2\otimes s_G(f_1) f_3\in I_N\otimes K[G]$ (cf. p. 731 of \cite{zub2}).
Therefore $Dist(H)$ is normal in $Dist(G)$. If $G$ is {\it connected} ({\it pseudoconnected} in the terminology of \cite{zub2}; see also \S 3 of \cite{zubgrish}), then the converse statement is also valid (follow arguments in the proof of Lemma 5.1 of \cite{zubgrish}). 

\section{Unipotent and finite supergroups}
 
An algebraic supergroup $G$ is called {\it unipotent} if every simple $G$-supermodule $M$ is trivial, that is $M\simeq \Pi^a K$ for $a=0, 1$ (cf. \cite{zub3, amas}).

We refer the reader to \cite{drup} for more detailed introduction to the theory of finite or infinitesimal algebraic supergroups.
A finite supergroup $G$ is called {\it infinitesimal} if $K[G]^+$ is nilpotent. Therefore $Dist(G)=K[G]^*$ and, by Lemma \ref{equivalence}, the category
$G-\mathsf{SMod}$ is equivalent to the category ${}_{Dist(G)}\mathsf{SMod}$. 
The functor $F : G-\mathsf{SMod} \to \mathsf{SMod}_K$ such that $F(M)= M/M_{Dist(G)}$ is right exact. Denote its $n$-th left derived functor $L_n F(M)$ by $H_n(G, M)$. In other words, 
$H_n(G, M)=H_n(F(P^{\bullet}_M))$ for any projective resolution $P^{\bullet}_M$ of $M$. Theorem 7.1' from chapter XX of \cite{lang}
implies that $Ext^n_G(M, K)^*\simeq H_n(G, M)$ for every $n\geq 0$.

\begin{lm}\label{injandproj}
If $G$ is unipotent, then every injective $G$-supermodule is isomorphic to $K[G]^{\oplus I}\bigoplus \Pi K[G]^{\oplus J}$ for some (possibly infinite) index sets $I$ and $J$. 

If $G$ is also infinitesimal, then every projective $G$-supermodule is isomorphic to
$Dist(G)^{\oplus I}\bigoplus \Pi Dist(G)^{\oplus J}$. 
\end{lm}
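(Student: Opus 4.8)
The plan is to prove each of the two statements by classifying the indecomposable injective, respectively projective, objects of the ambient category and then invoking the fact that every injective (respectively projective) object is a direct sum of indecomposable ones. For the first statement I would work in the even category $G-\mathsf{Smod}=\mathsf{Smod}^{K[G]}$, which is a locally finite, hence locally Noetherian, Grothendieck category; in particular injective hulls exist and arbitrary direct sums of injective supercomodules are again injective. Recall that $K[G]$, equipped with its right regular coaction, is an injective object of $\mathsf{Smod}^{K[G]}$ — it is cofree as a $K[G]$-supercomodule — and so is $\Pi K[G]$. The key point is the computation of $\mathrm{soc}(K[G])$. Since $G$ is unipotent, every simple $G$-supermodule is one-dimensional, so any simple subsupercomodule of $K[G]$ is spanned by a single homogeneous element $f$ with $\Delta(f)=f\otimes a$ for some $a\in K[G]$; the counit and coassociativity axioms then force $a$ to be group-like and $f$ to be a scalar multiple of $a$. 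But the only group-like element of $K[G]$ is $1$: a group-like $a\neq 1$ would yield a one-dimensional $G$-supermodule on which $G$ acts nontrivially, contradicting unipotence. Hence $\mathrm{soc}(K[G])=K\cdot 1\cong K$ is simple, so $K[G]$ is the injective hull of the trivial supermodule $K$ and $\Pi K[G]$ is the injective hull of $\Pi K$.

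Now let $M$ be an arbitrary injective $G$-supermodule. In a locally finite category $\mathrm{soc}(M)$ is essential in $M$, and it is a direct sum of simple supermodules, say $\mathrm{soc}(M)\cong K^{\oplus I}\oplus\Pi K^{\oplus J}$. Its injective hull is therefore $K[G]^{\oplus I}\oplus\Pi K[G]^{\oplus J}$, which is injective, being a direct sum of injectives; and since $M$ is injective with essential socle, $M$ coincides with the injective hull of $\mathrm{soc}(M)$. This proves the first assertion.

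For the second statement, assume in addition that $G$ is infinitesimal, so that $Dist(G)=K[G]^*$ is finite-dimensional and, by Lemma~\ref{equivalence}, $G-\mathsf{SMod}$ is equivalent to ${}_{Dist(G)}\mathsf{SMod}$. Unipotence of $G$ means that the only simple $Dist(G)$-supermodule is the trivial one $K$ (up to a parity shift), so $Dist(G)/\mathrm{rad}(Dist(G))\cong K$ and $Dist(G)$ is a local superalgebra. In particular $Dist(G)$, regarded as a left supermodule over itself, is indecomposable with head $K$, i.e. it is the projective cover of $K$, while $\Pi Dist(G)$ is the projective cover of $\Pi K$. Since $Dist(G)$ is finite-dimensional, hence semiperfect, every projective supermodule is a direct sum of indecomposable projective supermodules, and these are precisely the projective covers of the simple supermodules. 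Consequently every projective $G$-supermodule is isomorphic to $Dist(G)^{\oplus I}\oplus\Pi Dist(G)^{\oplus J}$, as required.

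The delicate point is the first part. One has to accommodate possibly infinite-dimensional injectives, and for this it is essential that $\mathsf{Smod}^{K[G]}$ is locally Noetherian, so that infinite direct sums of injectives stay injective and injective hulls are well behaved; one also has to pin down $\mathrm{soc}(K[G])$ correctly, which is exactly the place where the hypothesis that $G$ is unipotent enters. By contrast, once the equivalence $G-\mathsf{SMod}\simeq{}_{Dist(G)}\mathsf{SMod}$ is available, the second part reduces to standard facts about projective modules over semiperfect rings.
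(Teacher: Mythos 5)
Your proposal is correct and follows essentially the same route as the paper: identify $K[G]$ and $\Pi K[G]$ as the indecomposable injectives (the paper via $\mathrm{soc}(M)=M^G$ and $K[G]^G=K$, you via the group-like computation of $\mathrm{soc}(K[G])$), then decompose an arbitrary injective as a sum of injective hulls of its trivial simple socle factors, and handle projectives using that $Dist(G)$ is local since $Dist(G)/Dist(G)^+=K$. Your write-up just supplies more of the standard supporting detail (local Noetherianity, projective covers over the finite-dimensional local superalgebra) than the paper's terse argument.
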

\begin{proof}
Every injective $G$-supermodule is a direct sum of indecomposable injective hulls of simple supermodules. On the other hand, the socle of a $G$-supermodule $M$ is equal to $M^G$. Since $K[G]^G=K$, up to an isomorphism, $K[G]$ and $\Pi K[G]$ are unique indecomposable injectives. If $G$ is infinitesimal and $P$ is a projective $G$-supermodule, then $rad P=Dist(G)^+ P$.
Since $Dist(G)/Dist(G)^+=K$, the claim follows. 
\end{proof}
\begin{lm}\label{injandproj2}
If $G$ is unipotent, then a $G$-supermodule $M$ is injective if and only if $H^1(G, M)=0$. If $G$ is also infinitesimal, then a $G$-supermodule $M$ is projective if and only if $H_1(G, M)=0$. Moreover, if $M$ is injective (or projective, respectively), then
$H^n(G, M)=0$ (or $H_n(G, M)=0$, respectively) for every $n\geq 1$.
\end{lm}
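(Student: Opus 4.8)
My plan is to prove the two equivalences together, since they are mirror images, treating the cohomological one in detail and indicating the dual changes for the homological one; in each case the only substantive implication is the converse (``$H^{1}(G,M)=0\Rightarrow M$ injective'', respectively ``$H_{1}(G,M)=0\Rightarrow M$ projective''). The formal parts I would dispose of first: reading $H^{\bullet}(G,-)$ as the right derived functors of $(-)^{G}=Hom_{G}(K,-)$, and $H_{\bullet}(G,-)=L_{\bullet}F$ as defined in the text, an injective $M$ has $H^{n}(G,M)=Ext^{n}_{G}(K,M)=0$, and a projective $M$ has $H_{n}(G,M)=L_{n}F(M)=0$, for every $n\ge 1$; this settles both ``moreover'' clauses and the trivial directions of the two equivalences.

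For the first converse, let $G$ be unipotent and $H^{1}(G,M)=0$. I would embed $M$ into an injective hull, $M\hookrightarrow I$, so that $M$ is essential in $I$, and set $N=I/M$. Feeding $0\to M\to I\to N\to 0$ into the long exact sequence of $H^{\bullet}(G,-)$ and using $H^{1}(G,I)=0$ (the ``moreover'' clause, just established) together with $H^{1}(G,M)=0$, I obtain that $I^{G}\to N^{G}$ is surjective. Now suppose $N\ne 0$. Because $G$ is unipotent, every nonzero $G$-supermodule contains a nonzero trivial subsupermodule: a nonzero comodule has a nonzero finite-dimensional subcomodule, which has a simple submodule, and for unipotent $G$ this submodule is $\Pi^{a}K$. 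Hence $N^{G}\ne 0$; pick a homogeneous $0\ne\bar v\in N^{G}$ and lift it, by the surjectivity above, to a homogeneous $v\in I^{G}$. Then $Kv$ is a one-dimensional (hence simple) subsupermodule of $I$ on which $G$ acts trivially, and $v\notin M$ since $\bar v\ne 0$, so $Kv\cap M=0$, contradicting that $M$ is essential in $I$. Therefore $N=0$, i.e.\ $M=I$ is injective.

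For the second converse, assume in addition that $G$ is infinitesimal, so that $Dist(G)$ is finite-dimensional, $G-\mathsf{SMod}\simeq{}_{Dist(G)}\mathsf{SMod}$, and (as in the proof of Lemma \ref{injandproj}) $Dist(G)^{+}$ is the radical of $Dist(G)$, hence nilpotent, with $rad\,P=Dist(G)^{+}P$ for $P$ projective. Suppose $H_{1}(G,M)=0$. I would pick a projective cover $\pi\colon P\to M$ (available since $Dist(G)$ is Artinian) and put $Z=\ker\pi\subseteq rad\,P=Dist(G)^{+}P$. The long exact sequence of $H_{\bullet}(G,-)$ for $0\to Z\to P\to M\to 0$, together with $H_{1}(G,M)=0$, forces $H_{0}(G,Z)\to H_{0}(G,P)$ to be injective. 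But $H_{0}(G,X)=X/Dist(G)^{+}X$ for every $X$, the displayed map is induced by $Z\hookrightarrow P$, and its image is $(Z+Dist(G)^{+}P)/Dist(G)^{+}P=0$ because $Z\subseteq Dist(G)^{+}P$; so $H_{0}(G,Z)=0$, i.e.\ $Z=Dist(G)^{+}Z$, and Nakayama's lemma (valid as $Dist(G)^{+}$ is nilpotent) gives $Z=0$. Thus $M\cong P$ is projective.

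The one genuinely non-formal point, and the step I expect to be the main obstacle, is precisely the first converse: plain homological algebra yields only $Ext^{1}_{G}(K,M)=0$, not $Ext^{1}_{G}(N,M)=0$ for all $N$, and a naive d\'evissage over infinite-dimensional $N$ meets $\varprojlim^{1}$-obstructions. The essential-extension argument avoids this by working entirely inside the injective hull and using the characteristic property of unipotent supergroups that nonzero supermodules have nonzero invariants. The projective statement is then the exact dual of this, with invariants, socle and essential extensions replaced by coinvariants, head and projective cover, Nakayama's lemma doing the work that essentiality did before.
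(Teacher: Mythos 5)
Your proof is correct, but it runs along a different track than the paper's. For the injectivity direction the paper argues by d\'evissage: since $G$ is unipotent, $Ext^1_G(K,M)=0$ forces $Ext^1_G(N,M)=0$ for every finite-dimensional $N$ (composition series with trivial factors $\Pi^a K$), and the passage from finite-dimensional $N$ to arbitrary $N$ is delegated to the argument of Proposition 3.4 of \cite{zub1} (a Zorn/local-finiteness argument in the comodule category) --- precisely the step you flag as the potential $\varprojlim^1$ trouble. Your injective-hull argument sidesteps that citation entirely: essentiality of $M$ in $I$ plus the unipotence fact that every nonzero supermodule has nonzero invariants, combined with the surjectivity of $I^G\to (I/M)^G$ coming from $H^1(G,M)=0$ (note you do not even need $H^1(G,I)=0$ for this), gives $I/M=0$ directly; the only background facts you need are the existence of injective hulls in $G-\mathsf{Smod}$ (a locally finite comodule category, so this is standard) and the gradedness of $(-)^G$, which makes your homogeneous choices harmless. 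For the projective statement the paper merely says the proof is ``similar'' (implicitly via $Ext^1_G(M,K)^*\simeq H_1(G,M)$ and the dual d\'evissage), whereas you give a genuinely different and fully explicit argument: projective cover over the finite-dimensional algebra $Dist(G)$ (or $Dist(G)\rtimes\mathbb{Z}_2$, which is semiprimary, hence projective covers exist for all supermodules), the identification $H_0(G,X)=X/Dist(G)^+X$, and Nakayama with the nilpotent ideal $Dist(G)^+$. What the paper's route buys is the intermediate statement $Ext^1_G(N,M)=0$ for all $N$, which is reused elsewhere; what yours buys is self-containedness and no appeal to an external proposition. The ``moreover'' clauses you dispose of exactly as the paper does, so no gap there.
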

\begin{proof}
$Ext^1_G(K, M)=0$ implies $Ext^1_G(N, M)=0$ for every finite-dimensional $G$-supermodule $N$.
Following arguments in the proof of Proposition 3.4 in \cite{zub1}, we obtain that $Ext^1_G(N, M)=0$ for every $N$.
The proof of the second statement is similar.
\end{proof}
\begin{lm}\label{reduction}
Let $N\unlhd H$ and $H$ be unipotent. Then an $H$-supermodule $M$ is injective if and only if both the $H/N$-supermodule $M^N$ and the $N$-supermodule $M|_N$ are injective. 
\end{lm}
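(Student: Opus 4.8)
The plan is to reduce both implications to the cohomological criterion for injectivity in Lemma~\ref{injandproj2}, using the (Lyndon--)Hochschild--Serre spectral sequence attached to the normal subsupergroup $N\unlhd H$. Throughout I would write $H^n(G,-)$ for the $n$-th right derived functor of $(-)^G$, computed in the even underlying category, so that an injective supermodule is automatically acyclic; note also that normality of $N$ (as in Lemma~\ref{normalhopf}) guarantees that $M^N$ is an $H$-subsupermodule of $M$ on which $H$ acts through $H/N$, so that all the statements below make sense.

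I would first dispose of the ``only if'' direction. Since $N$ is normal, $H/N$ is an affine supergroup and $N$ is faithfully exact in $H$; in particular the restriction functor sends injectives to injectives, so $M|_N$ is injective as soon as $M$ is. For the assertion about $M^N$, I would invoke Lemma~\ref{injandproj}: an injective $H$-supermodule is a direct summand of a direct sum of copies of $K[H]$ and $\Pi K[H]$. As $(-)^N$ commutes with direct sums and with passage to direct summands, and $(\Pi K[H])^N=\Pi\bigl(K[H]^N\bigr)$, it is enough to check that $K[H]^N$ is injective over $H/N$. But normality of $N$ identifies $K[H]^N$ with the image of the comorphism $K[H/N]\hookrightarrow K[H]$, the residual $H$-action on it factoring through $H/N$ and agreeing there with the regular coaction of $H/N$ on $K[H/N]$; the regular representation of any affine supergroup is injective. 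Hence $M^N$ is injective over $H/N$.

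For the converse, the previous paragraph shows in particular that $(-)^N$ carries injective $H$-supermodules to $(-)^{H/N}$-acyclic $H/N$-supermodules; since moreover $(-)^N$ is left exact and $(-)^H=(-)^{H/N}\circ(-)^N$, the Grothendieck spectral sequence applies and yields
\[
E_2^{p,q}=H^p\bigl(H/N,\,H^q(N,M)\bigr)\Longrightarrow H^{p+q}(H,M).
\]
Now assume $M|_N$ is injective over $N$ and $M^N$ is injective over $H/N$. The first hypothesis forces $H^q(N,M)=0$ for all $q\geq 1$, so the spectral sequence collapses to edge isomorphisms $H^n(H,M)\cong H^n(H/N,M^N)$; the second hypothesis makes the right-hand side vanish for $n\geq 1$. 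In particular $H^1(H,M)=0$, and since $H$ is unipotent, Lemma~\ref{injandproj2} gives that $M$ is injective over $H$. (If one prefers to avoid the full spectral sequence, the five-term exact sequence already gives $0\to H^1(H/N,M^N)\to H^1(H,M)\to H^0\bigl(H/N,H^1(N,M)\bigr)$, whose outer terms both vanish.)

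The one genuinely non-formal point, and the step I expect to cost the most care, is the claim used in the ``only if'' part that $K[H]^N\cong K[H/N]$ as $H/N$-supermodules — equivalently, that taking $N$-invariants of the regular representation of $H$ returns the regular representation of $H/N$, with the correct residual action. The remaining ingredients — faithful exactness of a normal subsupergroup, the construction of the Grothendieck spectral sequence in the even category, and the criterion of Lemma~\ref{injandproj2} — are all either recorded in the earlier sections or are standard homological algebra, and there is no circularity: the ``only if'' direction is established unconditionally and only then fed into the spectral sequence used for the ``if'' direction.
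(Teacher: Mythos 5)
Your argument is correct, and its engine is the same as the paper's: the Lyndon--Hochschild--Serre spectral sequence $E_2^{p,q}=H^p(H/N,H^q(N,M))\Rightarrow H^{p+q}(H,M)$ collapses when $M|_N$ is injective, and the $H^1$-vanishing criterion of Lemma \ref{injandproj2} then converts the edge isomorphisms $H^n(H,M)\simeq H^n(H/N,M^N)$ into the injectivity statement. Where you diverge is in the bookkeeping around that sequence. The paper simply cites the spectral sequence from Proposition 3.1(3) of \cite{zubscal} and extracts \emph{both} the ``if'' direction and the injectivity of $M^N$ from it: once $M|_N$ is known injective (restriction to the faithfully exact normal subsupergroup $N$), the collapsed sequence plus Lemma \ref{injandproj2} applied to $H$ \emph{and} to $H/N$ (which is again unipotent, a point the paper leaves implicit) gives ``$M$ injective $\Leftrightarrow M^N$ injective''. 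You instead prove ``$M$ injective $\Rightarrow M^N$ injective'' structurally, via Lemma \ref{injandproj} and the identification $K[H]^N\simeq K[H/N]$ with its regular (hence injective) $H/N$-coaction, and then use this to verify the acyclicity hypothesis needed to build the spectral sequence as a Grothendieck composite of $(-)^N$ and $(-)^{H/N}$. The point you flag as non-formal is indeed the only real input, but it is exactly the content of the affine-quotient theory already invoked in the paper ($N$ normal implies $H/N$ is an affine supergroup with $K[H/N]=K[H]^N$, cf. \cite{zub2}, \cite{maszub}), so your route is sound; what it buys is self-containedness (no appeal to \cite{zubscal}) and a proof of the $M^N$ half of the ``only if'' direction that does not need $H/N$ to be unipotent, at the cost of invoking the classification of injectives over unipotent supergroups and the quotient identification, which the paper's one-line citation sidesteps.
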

\begin{proof}
If $M|_N$ is an injective $N$-supermodule, then the spectral sequence
(cf. Proposition 3.1(3) of \cite{zubscal})
$$
E_2^{n, m}=H^n(H/N, H^m(N, M))\Rightarrow H^{n+m}(H, M)
$$
degenerates, which yields $H^n(H/N, M^N)\simeq H^n(H, M)$ for every $n\geq 1$. Lemma \ref{injandproj2} implies that 
$M$ is injective if and only if $M^N$ is an injective $H/N$-supermodule. On the other hand, if $M$ is injective, then
so is $M|_N$.
\end{proof}
Every superalgebra $A$ is a $\mathbb{Z}_2$-module, where the generator of $\mathbb{Z}_2$ acts on $A$ as $a\to (-1)^{|a|}a$ for $a\in A$. 
The {\it semi-direct product} algebra $A\rtimes\mathbb{Z}_2$ is isomorphic to $A_0\bigoplus A_1$, where each component
$A_{i}$ coincides with $A$ as a vector space for $i\in\mathbb{Z}_2$. Besides, $a_i a'_j=(-1)^{i|a'|}(aa')_{i+j}$ for $a_i\in A_i, a'_j\in A_j$ and $i, j\in\mathbb{Z}_2$. 
Additionally, if $A$ is a (not necessarily supercommutative) Hopf superalgebra, then
$A\rtimes\mathbb{Z}_2$ is a Hopf algebra with the comultiplication
$$\Delta(a_i)=\sum (a_1)_{i+|a_2|}\otimes (a_2)_i ,$$
the counit $\epsilon(a_i)=\epsilon_A(a)$ and the antipode $s(a_i)=(-1)^{(i+|a|)|a|}s_A(a)_{i+|a|}$.
\begin{lm}\label{projandinjoverinfinitesimal}
Let $H$ be an infinitesimal supergroup. Then a $H$-supermodule $M$ is injective if and only if $M$ is projective.
\end{lm}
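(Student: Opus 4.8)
The plan is to reduce the statement to the classical fact that over a finite-dimensional self-injective algebra a left module is injective if and only if it is projective, realising such an algebra by means of the semidirect product construction recalled just above. Since $H$ is infinitesimal it is in particular a finite supergroup, so $R:=Dist(H)=K[H]^{*}$ is a finite-dimensional Hopf superalgebra, and by Lemma \ref{equivalence} the category $H-\mathsf{SMod}=\mathsf{SMod}^{K[H]}$ is equivalent to ${}_{R}\mathsf{SMod}$ via an equivalence preserving parities of morphisms; hence it restricts to an equivalence of the underlying even abelian categories $H-\mathsf{Smod}\simeq{}_{R}\mathsf{Smod}$, under which an $H$-supermodule is injective (resp.\ projective) precisely when the corresponding object of ${}_{R}\mathsf{Smod}$ is. Thus it is enough to show that an object of ${}_{R}\mathsf{Smod}$ is injective if and only if it is projective.

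Next I would pass to the ordinary algebra $B:=R\rtimes\mathbb{Z}_{2}$, which is finite-dimensional because $R$ is and which carries the Hopf algebra structure described above. The abelian category ${}_{R}\mathsf{Smod}$ is equivalent to the category ${}_{B}\mathsf{Mod}$ of all left $B$-modules: since $\mathrm{char}\,K\neq 2$, a left $B$-module is the same datum as an $R$-module $M$ together with the decomposition $M=M_{0}\oplus M_{1}$ into the $\pm 1$-eigenspaces of the generator of $\mathbb{Z}_{2}$, and the multiplication $a_{i}a'_{j}=(-1)^{i|a'|}(aa')_{i+j}$ in $B$ forces this decomposition to be a $\mathbb{Z}_{2}$-grading for which $M$ becomes an $R$-supermodule, a $B$-module homomorphism being exactly an even $R$-supermodule homomorphism. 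This equivalence is exact, so it carries projective (resp.\ injective) objects to projective (resp.\ injective) objects.

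Finally I would invoke the Larson--Sweedler theorem: every finite-dimensional Hopf algebra over a field is a Frobenius algebra, hence $B$ is self-injective, and being finite-dimensional it is quasi-Frobenius. Since over a quasi-Frobenius algebra a left module is injective if and only if it is projective, transporting this equivalence back along ${}_{B}\mathsf{Mod}\simeq{}_{R}\mathsf{Smod}\simeq H-\mathsf{Smod}$ proves the lemma.

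The part demanding care, rather than a genuine difficulty, is the identification ${}_{R}\mathsf{Smod}\simeq{}_{B}\mathsf{Mod}$: one has to check that the correspondence above is functorial and really matches all the structure, so that it is an equivalence of abelian categories and hence transports projective and injective resolutions. The other point to keep in mind is that ``injective $\Leftrightarrow$ projective'' over a quasi-Frobenius algebra holds for arbitrary, possibly infinite-dimensional, modules; this uses that $B$ is Noetherian (so arbitrary direct sums of injective modules are injective), that $B$ is perfect (so projective modules are direct sums of indecomposable projectives), and that over a quasi-Frobenius algebra the indecomposable projective modules coincide with the indecomposable injective ones.
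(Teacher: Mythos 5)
Your proposal is correct and follows essentially the same route as the paper: reduce via Lemma \ref{equivalence} to modules over $R=Dist(H)=K[H]^*$, pass to the semidirect product $R\rtimes\mathbb{Z}_2$ (the paper cites Lemma 7.6 of \cite{zubmar} for this equivalence, which you verify directly), and apply the Larson--Sweedler theorem that this finite-dimensional Hopf algebra is Frobenius, so injective and projective modules coincide. Your extra care about the equivalence ${}_{R}\mathsf{Smod}\simeq{}_{B}\mathsf{Mod}$ (using $\mathrm{char}\,K\neq 2$) and about arbitrary, possibly infinite-dimensional, modules over a quasi-Frobenius algebra only fills in details the paper leaves implicit.
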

\begin{proof}
By Lemma \ref{equivalence}, $H-\mathsf{SMod}\simeq _{Dist(H)}\mathsf{SMod}$. 
Since $Dist(H)\rtimes\mathbb{Z}_2$ is Frobenius by the main theorem from \cite{larsweed} and $_{Dist(H)}\mathsf{SMod}\simeq _{Dist(H)\rtimes\mathbb{Z}_2}\mathsf{Mod}$ by Lemma 7.6 from \cite{zubmar}, the statement follows.
\end{proof}

\section{Frobenius kernels}

Assume that $K$ is a perfect field and $V$ is a $K$-superspace. Denote by $V^{(r)}$ the superspace that coincides with $V$ as a $\mathbb{Z}_2$-graded abelian group, 
but on which each $a\in K$ acts as $a^{p^{-r}}$ does on $V$. 
If $A$ is a (not necessarily (super)commutative) (super)algebra, then $A^{(r)}$ is also a (super)algebra with respect to the same multiplication. Moreover, if $A$ is a Hopf (super)algebra, then $A^{(r)}$ is a Hopf (super)algebra with $\Delta_{A^{(r)}}=\Delta_A, s_{A^{(r)}}=s_A$ and $\epsilon_{A^{(r)}}=\epsilon_A^{p^r}$. From now on we,
assume that $K$ is perfect unless stated otherwise.

Let $G$ be an algebraic supergroup. Assume that $G$ is {\it reduced}, that is
$G_{res}$ is an reduced algebraic group. For every $r\geq 1$ there is an exact sequence 
$$1\to G_r\to G\to G_{ev}^{(r)}\to 1,$$
where the epimorphism $G\to G_{ev}^{(r)}$ is induced by the Hopf superalgebra embedding $K[G_{ev}^{(r)}]=K[G_{ev}]^{(r)}\to K[G]$ 
given by the rule $\overline{f}\to f^{p^r}$, where $\overline{f}$ is the residue class of $f\in K[G]_0$ in $K[G_{ev}]=K[G]_0/K[G]^2_1$
(see \S 8 of \cite{zub2}). The subsupergroup $G_r$ is called the $r$-{\it th Frobenius kernel} of $G$. 
Since $I_{G_r}=\sum_{f\in K[G]_0} K[G]f^{p^r}$, each $G_r$ is a (finite) infinitesimal subsupergroup in $G$.
Lemma \ref{equivalence} shows that the category of $G_r$-supermodules is naturally equivalent to the category of $Dist(G_r)$-supermodules. 

Let $G=GL(W)$. A simple $G_r$-supermodule is isomorphic to the top $L_r(\lambda)$ of $Z_r(\lambda)=Dist(G_r)\otimes_{Dist(B_r^{opp})} K_{\lambda}$
or to its parity shift $\Pi L_r(\lambda)$ (cf. Theorem 5.4 of \cite{kuj}). Moreover, $L_r(\lambda)\simeq L_r(\mu)$ if and only if
$\lambda-\mu\in p^r X(T)$. 

A simple $G_r$-supermodule $L_r(\lambda)$ is also isomorphic to the socle of $Z'_r(\lambda)=ind^{G_r}_{B_r} K_{\lambda}$. 
In fact, there is a natural superscheme isomorphism $B_r\times V^{opp}_r\to G_r$ (see Lemma 14.1 of \cite{zub4}). 
By Lemma 8.2 and Remark 8.3 of \cite{zub4}, $Z'_r(\lambda)|_{B^{opp}_r}\simeq K_{\lambda}\otimes K[V^{opp}_r]$, where $T$ acts on $K[V^{opp}_r]$ by conjugations and $V^{opp}_r$ acts trivially on $K_{\lambda}$. Thus $Z'_r (\lambda)^{V^{opp}_r}\simeq
K_{\lambda}$ as a $T_r$-(super)module, hence the socle of $Z'_r (\lambda)$ is irreducible.
Let $L'_r(\lambda)$ denotes the socle of $Z'_r (\lambda)$. Since $L'_r(\lambda)^{V^{opp}_r}\simeq K_{\lambda}$
, (co)Frobenius reciprocity law implies
$$Hom_{G_r}(Z_r(\lambda), L'_r(\lambda))\simeq Hom_{B^{opp}_r}(K_{\lambda}, L'_r(\lambda))
=K;$$
that is $L_r(\lambda)\simeq L'_r(\lambda)$.

Since $Dist(G)=Dist(G_{ev})Dist(G_r)$, one can mimic the proofs of Proposition 4.3 and Proposition 4.4 of
\cite{shuweiq} to show that
a simple $G$-supermodule $L(\lambda)$ of highest weight $\lambda\in X(T)^+$ is completely reducible as a $G_r$-supermodule. 
Furthemore, it is irreducible as $G_r$-supermodule whenever
$\lambda\in X_r(T)^+$, where $$X_r(T)^+=\{\lambda\in X(T) | 0\leq \lambda_i-\lambda_{i+1} < p^r \mbox{ for } 1\leq i\leq m+n-1 \mbox{ such that } i\neq m\}.$$   
It is easy to see that for every weight $\lambda\in X(T)$ there are $r\geq 1$ and $\mu\in X_r(T)^+$ such that $\lambda-\mu\in p^r X(T)$. 
Therefore, $L_r(\lambda)\simeq L(\mu)|_{G_r}$. 

\section{Frobenius twists}

Let $G$ be a reduced algebraic supergroup defined over $\mathbb{F}_p$. In other words, 
$K[G]=A\otimes_{\mathbb{F}_p} K$, where $A$ is a Hopf $\mathbb{F}_p$-superalgebra. Since 
$K[G_{ev}]\simeq A/AA_1\otimes _{\mathbb{F}_p} K$, $G_{ev}$ and $G_{res}$ are also defined over $\mathbb{F}_p$.
There is a natural Hopf superalgebra endomorphism $\pi_r : K[G]\to K[G]$ given by $f\otimes b\to f^{p^r}\otimes b$ for $f\in A$ and $b\in K$. 

Let $M$ be a left $G$-supermodule. Then one can define a new representation of $G$ on $M$, denoted by $M^{[r]}$, by
$\tau_{M^{[r]}}(m)=(id_M\otimes \pi_r)\tau_M$ (cf. 9.10, Part I of \cite{jan}). We call $M^{[r]}$ the $r$-th {\it Frobenius twist}
of $M$. 

It is evident that $G_r$ acts on $M^{[r]}$ trivially. Since $G/G_r\simeq G_{ev, r}$,  $M^{[r]}$ has the natural structure of a $G_{ev, r}$-supermodule via the Hopf (super)algebra isomorphism $K[G_{ev}]^{(r)}\simeq A_0^{p^r}\otimes_{\mathbb{F}_p} K\subseteq K[G]$ given by $\overline{f}\otimes b\to f^{p^r}\otimes b^{p^r}$, where $\overline{f}$ is the residue class of $f\in A_0$ in $A_0/A_1^2$ and $b\in K$.

Let $M$ be a $G_{res}$-module. It has been observed that $M$ is also a $G_{ev}$-supermodule. 
The supergroups $G_{ev}$ and $G_{ev}^{(r)}$ are isomorphic to each other (cf. Remark I.9.5 of \cite{jan}). More precisely, 
the corresponding Hopf superalgebra isomorphism $K[G_{ev}]\to K[G_{ev}^{(r)}]$ is defined as $h\otimes b\to h\otimes b^{p^{-r}}$ for 
$h\in A/AA_1$ and $b\in K$. In particular, $M$ has a natural $G$-supermodule structure via the epimorphism $G\to G_{ev}^{(r)}$
(see Remark \ref{inflation/restriction} later). This structure is defined as $$\tau'(m)=\sum m_1\otimes f_2^{p^r}\otimes b_2 ,$$
where $\tau_M(m)=\sum m_1\otimes\overline{f_2}\otimes b_2, f_2\in A_0$ and $b_2\in K$. We call such a $G$-supermodule the $r$-th {\it even Frobenius twist} of $M$ and 
by abuse of notation we also denote it by $M^{[r]}$. 
\begin{rem}\label{comparewithKujawa}
The concepts of the even Frobenius twist has been introduced in \cite{kuj}, where it has been called just the Frobenius twist (see also
\cite{drup}). Since the concept of the Frobenius twist makes sense for any supermodule, we decided to distinguish these (obviously different) constructions by their names. 
Observe also that if $M$ is a $G_{res}$-module, then the $r$-th Frobenius twist of $G_{ev}$-supermodule $M$ coincides with its $r$-th even Frobenius twist, restricted to $G_{ev}$.
\end{rem}
\begin{lm}\label{endomorphismring}
If $M$ is a $G_{res}$-module, then $End_G(M^{[r]})=End_{G_{res}}(M)$.
\end{lm}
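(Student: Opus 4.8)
The plan is to identify both endomorphism rings with a common space of invariants and then check the two identifications agree. Recall that for a $G$-supermodule $N$ we have $\mathrm{End}_G(N) \simeq (N \otimes N^*)^G$, and likewise $\mathrm{End}_{G_{res}}(M) \simeq (M \otimes M^*)^{G_{res}}$, where the superscript denotes the (super)module of invariants. Since $M$ is a $G_{res}$-module, $M^{[r]}$ is by construction the pullback of $M$ along the quotient morphism $G \to G_{ev}^{(r)} \simeq G_{res}^{(r)}$ (composed with the isomorphism $G_{ev} \simeq G_{ev}^{(r)}$), so as vector spaces $M^{[r]} = M$ and $(M^{[r]})^* = M^*$, with $G$ acting through $G \to G_{ev}^{(r)}$. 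Thus $\mathrm{End}_G(M^{[r]}) \simeq (M \otimes M^*)^{G}$, where $G$ acts via the Frobenius quotient.

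Next I would observe that the $G$-invariants of a supermodule inflated from $G_{ev}^{(r)}$ coincide with the $G_{ev}^{(r)}$-invariants: indeed the Frobenius kernel $G_r$ acts trivially on any even Frobenius twist (as noted in the paragraph on even Frobenius twists), so taking $G$-invariants factors through $G/G_r \simeq G_{ev}^{(r)}$, giving $(M \otimes M^*)^{G} = (M \otimes M^*)^{G_{ev}^{(r)}}$. Then, using the Hopf superalgebra isomorphism $K[G_{ev}] \to K[G_{ev}^{(r)}]$ recalled just before the lemma (the map $h \otimes b \mapsto h \otimes b^{p^{-r}}$), the functor of $G_{ev}^{(r)}$-invariants is identified with the functor of $G_{ev}$-invariants — this isomorphism is merely a relabeling of scalars and does not change the underlying abelian-group fixed points. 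Finally, since $M$ is a $G_{res}$-module regarded as a purely even $G_{ev}$-supermodule, and $G_{res}-\mathsf{mod}$ is a full subcategory of $G_{ev}-\mathsf{Smod}$ (as recalled in Section 2, together with $\mathrm{Ext}$-agreement in degree $0$ in particular), we get $(M \otimes M^*)^{G_{ev}} = (M \otimes M^*)^{G_{res}} \simeq \mathrm{End}_{G_{res}}(M)$. Chaining the identifications yields $\mathrm{End}_G(M^{[r]}) \simeq \mathrm{End}_{G_{res}}(M)$.

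It remains to verify that this chain of isomorphisms is an isomorphism of rings (i.e. compatible with composition), not merely of vector spaces. This is routine: each step is induced by either an identity map on the underlying space, a coalgebra (relabeling) isomorphism, or the full-subcategory inclusion, all of which are monoidal and hence respect the algebra structure on $(N \otimes N^*)^G = \mathrm{End}_G(N)$. Alternatively, and perhaps more cleanly, one can argue directly: a $K$-linear endomorphism $\varphi$ of the underlying superspace of $M$ commutes with the $G$-action on $M^{[r]}$ if and only if, for every $A \in \mathsf{SAlg}_K$ and every $g \in G(A)$, $\varphi \otimes \mathrm{id}_A$ commutes with $\tau'(g)$; but $\tau'(g)$ depends only on the image of $g$ in $G_{ev}^{(r)}(A)$, and as $g$ ranges over $G(A)$ this image ranges over $G_{ev}^{(r)}(A) = G_{ev}(A^{(r)})$, which under the scalar relabeling is exactly the condition that $\varphi$ commute with the original $G_{res}$-action on $M$. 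So the two endomorphism rings literally consist of the same linear maps with the same composition.

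The main obstacle is essentially bookkeeping: one must be careful that the even Frobenius twist is set up so that the \emph{underlying} linear maps are unchanged (only the scalar action is twisted), so that ``the same $\varphi$'' makes sense on both sides; and one must correctly track that $G$-equivariance descends to $G_{ev}^{(r)}$-equivariance via the trivial $G_r$-action. Neither is deep, but getting the perfect-field scalar relabeling $b \mapsto b^{p^{-r}}$ to match up on both the module and its dual is the step most prone to sign- or exponent-errors.
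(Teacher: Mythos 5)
Your main chain of identifications is correct, and it is in substance a coordinate-free version of the paper's argument, presented differently. The paper fixes a (finite) basis of $M$, writes the twisted coaction through the matrix $F=(\pi_r(f_{ji}))$, and observes that, since $\pi_r$ induces an algebra isomorphism of $K[G_{res}]=K[G]_0/K[G]_1^2$ onto $A_0^{p^r}\otimes_{\mathbb{F}_p}K\subseteq K[G]$, the condition $F\Phi=\Phi F$ is equivalent to $\overline{F}\Phi=\Phi\overline{F}$. This is exactly the mechanism you use --- the twisted coaction is the original coaction composed with an injective embedding of $K[G_{res}]$ into $K[G]$ --- but phrased as a matrix computation rather than via $(M\otimes M^*)$-invariants, the trivial $G_r$-action, and the relabeling isomorphism $G_{ev}\simeq G_{ev}^{(r)}$. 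Your route buys conceptual clarity (and makes the ring-structure compatibility transparent, since both algebras sit inside $End_K(M)$), at the cost of needing $M$ finite-dimensional for $End_G(N)\simeq (N\otimes N^*)^G$; the paper's proof makes the same implicit assumption by choosing a finite basis, so you are on equal footing there.

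One genuine slip occurs in your ``alternative, more direct'' paragraph: the claim that, as $g$ ranges over $G(A)$, its image ranges over all of $G_{ev}^{(r)}(A)$ is false. The sequence $1\to G_r\to G\to G_{ev}^{(r)}\to 1$ is exact only as sheaves, and $G(A)\to G_{ev}^{(r)}(A)$ need not be surjective for a given $A$, since lifting a point amounts to extracting $p^r$-th roots in $A$. Hence ``$\varphi$ commutes with $\tau'(g)$ for all $g\in G(A)$ and all $A$'' does not literally yield commutation with every point of $G_{ev}^{(r)}$. The repair is to argue with the coaction itself (equivalently, the generic point): $\tau_{M^{[r]}}=(id_M\otimes\iota)\tau_M$, where $\iota:K[G_{res}]\to K[G]$, $\overline{f}\otimes b\mapsto f^{p^r}\otimes b$, is injective, so $(\varphi\otimes id)\tau_{M^{[r]}}=\tau_{M^{[r]}}\varphi$ holds if and only if $(\varphi\otimes id)\tau_M=\tau_M\varphi$ --- which is in effect what the paper's matrix argument does. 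Since your primary argument does not rely on this pointwise surjectivity, the lemma is still proved; just drop or amend that sentence.
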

\begin{proof}
Consider a basis $m_1, \ldots, m_k$ of $M$. Then 
$$\tau_M(m_i)=\sum_{1\leq j\leq k} m_j\otimes \overline{f_{ji}} \mbox { for } f_{ji}\in A_0\otimes_{\mathbb{F}_p} K,
$$
and
$$\tau_{M^{[r]}}(m_i)=\sum_{1\leq j\leq k} m_j\otimes \pi_r(f_{ji}) \mbox{ for } 1\leq i\leq k.$$
An endomorphism $\phi\in End_K(M)$ given by 
$$\phi(m_i)=\sum_{1\leq j\leq k}\phi_{ji}m_j \mbox{ for } 1\leq i\leq k$$
belongs to $End_G(M^{[r]})$ if and only if $F\Phi =\Phi F$, where
$$F=\left(\begin{array}{ccc}
\pi_r(f_{11}) & \ldots & \pi_r(f_{1k})\\
\vdots & \vdots & \vdots \\
\pi_r(f_{k1}) & \ldots & \pi_r(f_{kk})
\end{array}\right) \mbox{ and }\
\Phi=\left(\begin{array}{ccc}
\phi_{11} & \ldots & \phi_{1k}\\
\vdots & \vdots & \vdots \\
\phi_{k1} & \ldots & \phi_{kk}
\end{array}\right). 
$$
Since $\pi_r$ induces an algebra isomorphism $K[G_{res}]=K[G]_0/K[G]^2_1\to A_0^{p^r}\otimes_{\mathbb{F}_p} K$, 
the latter condition is equivalent to $\overline{F}\Phi=\Phi\overline{F}$, where
$$F=\left(\begin{array}{ccc}
\overline{f_{11}} & \ldots & \overline{f_{1k}}\\
\vdots & \vdots & \vdots \\
\overline{f_{k1}} & \ldots & \overline{f_{kk}}
\end{array}\right).$$
\end{proof}

\section{Dualities}

Let $G$ be an algebraic supergroup and $\sigma$ be an anti-automorphism of the Hopf superalgebra $K[G]$.
If $M$ is a finite-dimensional $G$-supermodule, then one can define its $\sigma$-{\it dual} $M^{<\sigma>}$ as follows (cf.  \cite{zub1}).
Fix a homogeneous basis of $M$ consisting of elements $m_i$ for $1\leq i\leq t$.  If $\tau_M(m_i)=\sum_{1\leq k\leq t} m_k\otimes f_{ki}$, where 
$f_{ki}\in K[G]$ and $|f_{ki}|=|m_i|+|m_k| \pmod 2$, then the $G$-supermodule $M^{<\sigma>}$ has a basis consisting of elements $m^{<\sigma>}_i$ such that
$$\tau_{M^{<\sigma>}}(m^{<\sigma>}_i)=\sum_{1\leq k\leq t} m^{<\sigma>}_k \otimes (-1)^{|m_k|(|m_i|+|m_k|)}\sigma(f_{ik}).$$ 
The functor $M\to M^{<\sigma>}$ is a self-duality of the full subcategory of all finite-dimensional supermodules.

Moreover, $\sigma$ induces an anti-automorphism of $Dist(G)$ by $\phi\mapsto\phi\cdot\sigma=\phi^{<\sigma>}$ for $\phi\in Dits(G)$. In other words,
$(\phi\psi)^{<\sigma>}=(-1)^{|\phi||\psi|}\psi^{<\sigma>}\phi^{<\sigma>}$ for every $\phi, \psi\in Dist(G)$.
\begin{lm}\label{actofdistontaudual}
$M^{<\sigma>}$ can be identified with the dual superspace $M^*$ on which $Dist(G)$ acts by
$(\phi f)(v)=(-1)^{|\phi||f|}f(\phi^{<\sigma>} m)$ for $f\in M^*$ and $m\in M$.
\end{lm}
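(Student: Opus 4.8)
The plan is to deduce the statement from an explicit computation in the homogeneous basis $m_1,\dots,m_t$ already fixed in the definition of $M^{<\sigma>}$, transporting the $Dist(G)$-action along the parity-preserving linear isomorphism $M^{<\sigma>}\xrightarrow{\ \sim\ }M^*$ that sends $m_i^{<\sigma>}$ to the dual basis vector $m_i^*$ (note $|m_i^{<\sigma>}|=|m_i|=|m_i^*|$). First I would record that, by the formula for the $R^*$-action on a right $R$-supercomodule stated just after Lemma~\ref{self-duality} (applied with $Dist(G)\subseteq K[G]^*$ in place of $R^*$, equivalently restricting the standard $K[G]^*$-action), the $Dist(G)$-module structure underlying any $G$-supermodule $N$ with $\tau_N(n)=\sum n_1\otimes g_2$ is $\phi\cdot n=\sum(-1)^{|\phi||n_1|}\phi(g_2)\,n_1$. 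Feeding in $\tau_M(m_i)=\sum_k m_k\otimes f_{ki}$ and the coaction of $M^{<\sigma>}$ displayed before the lemma, this gives
\[
\phi\cdot m_i=\sum_{k}(-1)^{|\phi||m_k|}\phi(f_{ki})\,m_k,\qquad
\phi\cdot m_i^{<\sigma>}=\sum_{k}(-1)^{|\phi||m_k|+|m_k|(|m_i|+|m_k|)}\,\phi(\sigma(f_{ik}))\,m_k^{<\sigma>}.
\]

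Next I would simplify the second sum. By definition $\phi(\sigma(f_{ik}))=\phi^{<\sigma>}(f_{ik})$, and a homogeneous distribution $\phi$ annihilates every homogeneous $h\in K[G]$ with $|h|\neq|\phi|$; since $|f_{ik}|=|m_i|+|m_k|$, on the support of $\phi\cdot m_i^{<\sigma>}$ we may substitute $|\phi|=|m_i|+|m_k|$ into the exponent, which then equals $2|m_k|(|m_i|+|m_k|)\equiv 0\pmod 2$. Hence $\phi\cdot m_i^{<\sigma>}=\sum_k\phi^{<\sigma>}(f_{ik})\,m_k^{<\sigma>}$. On the other side, taking $(\phi f)(m)=(-1)^{|\phi||f|}f(\phi^{<\sigma>}\cdot m)$ as the proposed action on $M^*$, a one-line check (using $|m_i^*|=|m_i|$, $|\phi^{<\sigma>}|=|\phi|$, and the first displayed formula applied to $\phi^{<\sigma>}\cdot m_j$) yields $(\phi\cdot m_i^*)(m_j)=(-1)^{2|\phi||m_i|}\phi^{<\sigma>}(f_{ij})=\phi^{<\sigma>}(f_{ij})$, i.e. $\phi\cdot m_i^*=\sum_j\phi^{<\sigma>}(f_{ij})\,m_j^*$. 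Comparing, the isomorphism $m_i^{<\sigma>}\mapsto m_i^*$ intertwines the two $Dist(G)$-actions, which is the assertion. As a preliminary sanity check I would also verify that the formula $(\phi f)(m)=(-1)^{|\phi||f|}f(\phi^{<\sigma>}\cdot m)$ really defines a left action: the identity $\phi\cdot(\psi\cdot f)=(\phi\psi)\cdot f$ is immediate from the anti-automorphism relation $(\phi\psi)^{<\sigma>}=(-1)^{|\phi||\psi|}\psi^{<\sigma>}\phi^{<\sigma>}$ recalled before the lemma.

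I do not expect a conceptual obstacle; the only genuine work is disciplined Koszul-sign bookkeeping, and the one point that makes the two descriptions coincide is precisely that a distribution kills homogeneous functions of the wrong parity, which is what collapses the parasitic sign $(-1)^{|m_k|(|m_i|+|m_k|)}$ occurring in the coaction of $M^{<\sigma>}$. One could instead phrase the argument functorially — the comodule-to-module functor of Lemma~\ref{self-duality} carries the $\sigma$-duality on finite-dimensional supercomodules to the $\sigma$-twisted transpose duality on finite-dimensional $Dist(G)$-modules — but unwinding this on a fixed basis reproduces exactly the computation above, so nothing is gained.
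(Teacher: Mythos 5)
Your proposal is correct and follows the same route as the paper: the paper's proof simply declares the isomorphism $M^*\to M^{<\sigma>}$, $m_i^*\mapsto m_i^{<\sigma>}$, leaving the verification implicit, and your computation (including the observation that homogeneity of $\phi$ kills the sign $(-1)^{|m_k|(|m_i|+|m_k|)}$, and the check that the anti-automorphism relation makes the formula a left action) is exactly the bookkeeping that proof omits, carried out consistently with the paper's convention $\phi\cdot n=\sum(-1)^{|\phi||n_1|}\phi(r_2)n_1$.
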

\begin{proof}
Let $m^*_i$ form a dual basis of $M^*$. The required isomorphism $M^*\to M^{<\sigma>}$ is defined by $m^*_i\mapsto 
m^{<\sigma>}_i$. 

\end{proof} 
\begin{rem}
If $\sigma=s_{K[G]}$, then $M^{<\sigma_G>}$ coincides with the dual $G$-supermodule $M^*$.
In particular, $Dist(G)$ acts on $M^*$ by $(\phi f)(m)=(-1)^{|\phi||f|}f(s_{Dist(G)}(\phi)m)$.
\end{rem}
Assume $G=GL(m|n)$.
Then the map $t : c_{ij}\mapsto (-1)^{|i|(|i|+|j|)}c_{ji}$ induces an anti-automorphism of the Hopf superalgebra $K[G]$.
Furthemore, this anti-automorphism induces a self-duality $M\mapsto M^{<t>}$ of the full subcategory of all finite-dimensional $G$-supermodules.  
For example, $H^0(\lambda)^{<t>}=V(\lambda)$ (see \cite{zub1} for more details).

The related anti-automorphism $\phi\mapsto\phi^{<t>}$ of the superalgebra $Dist(G)$ is defined by $(e_{ij}^{(l)})^{<t>} =e_{ji}^{(l)}$ if $e_{ij}$ is even, 
and $e_{ij}^{<t>} =(-1)^{|j|(|i|+|j|)}e_{ji}$ otherwise.

\section{Good and Weyl filtrations}

Let $N$ be a normal subsupergroup of an algebraic supergroup $G$ and  
$M$ be a finite-dimensional $G$-supermodule. Denote $M_{Dist(N)}$ by $M_N$. 
\begin{lm}\label{specificsubmod}
If $G$ is connected, then $M_N$ is a $G$-subsupermodule. 
\end{lm}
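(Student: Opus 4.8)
The plan is to reduce the statement for the normal subsupergroup $N$ to the already-established Lemma~\ref{normalhopf} about left-normal Hopf subsuperalgebras, applied to $Dist(N)\subseteq Dist(G)$. First I would invoke the material of Section~3: since $N$ is normal in $G$, for every $f\in I_N$ we have $\sum(-1)^{|f_1||f_2|}f_2\otimes s_G(f_1)f_3\in I_N\otimes K[G]$, and consequently $Dist(N)$ is a normal Hopf subsuperalgebra of $Dist(G)$ --- in particular it is left normal. Here I would also record that $Dist(N)^+=\ker\epsilon_{Dist(N)}$ is exactly the set of $\phi\in Dist(N)$ with $\phi(1_{K[G]})=0$, so that $M_N=M_{Dist(N)}$ is, by definition, the subsuperspace of $M$ spanned by all $\phi m$ with $\phi\in Dist(N)^+$ and $m\in M$.

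Second, I would observe that $M$, being a finite-dimensional $G$-supermodule, is a left $Dist(G)$-supermodule in the standard way (the action of $Dist(G)$ on a $G$-supermodule is recalled in Section~3 and used throughout Section~5). Thus $M$ is a left $H$-supermodule for $H=Dist(G)$, and $R=Dist(N)$ is left normal in $H$. Lemma~\ref{normalhopf} then immediately gives that $M_R=M_N$ is an $H$-subsupermodule of $M$, i.e. it is stable under the $Dist(G)$-action; concretely, the one-line computation in the proof of Lemma~\ref{normalhopf} shows $(\phi\psi)m=\sum(\sum(-1)^{|\psi||\phi_2|}\phi_1\psi\, s_H(\phi_2))\phi_3 m\in M_N$ for $\psi\in Dist(N)^+$, $\phi\in Dist(G)$.

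The final --- and genuinely load-bearing --- step is to upgrade ``$M_N$ is a $Dist(G)$-subsupermodule'' to ``$M_N$ is a $G$-subsupermodule.'' This is exactly the point where connectedness of $G$ enters. For a connected algebraic supergroup a subsuperspace of a finite-dimensional $G$-supermodule is a $G$-subsupermodule if and only if it is a $Dist(G)$-subsupermodule; this is the supergroup analogue of the classical fact (cf. the proof of Lemma~5.1 of \cite{zubgrish} and the discussion of $Dist$ versus subsupergroups in Section~3), and it relies on $K[G]$ being an integral-domain-type situation only through its even part so that the coaction is recovered from the $Dist$-action. I expect this last implication to be the main obstacle to write cleanly: one must either quote the precise form of that $Dist$-versus-$G$ correspondence from \cite{zub2, zubgrish} or reprove it, checking that the subcomodule generated by $M_N$ inside $M$ coincides with $M_N$ because $G_{res}$ is reduced and connected. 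Granting that correspondence, the proof is complete: $M_N$ is $Dist(G)$-stable by Lemma~\ref{normalhopf}, hence $G$-stable by connectedness.
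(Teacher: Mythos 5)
Your argument is essentially the paper's own proof: the paper also combines Lemma~\ref{normalhopf} (applied to $R=Dist(N)$, left normal in $H=Dist(G)$ because $N\unlhd G$) with the fact that, for connected $G$, $Dist(G)$-stability of a subsuperspace of a $G$-supermodule implies $G$-stability --- the precise reference for your final ``load-bearing'' step being Lemma 9.4 of \cite{zub2}. So the proposal is correct and matches the intended route; the only thing missing is that explicit citation in place of reproving the correspondence.
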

\begin{proof}
Combine Lemma \ref{normalhopf} and Lemma 9.4 of \cite{zub2}. 
\end{proof}
\begin{cor}\label{corroftheabovelemma}
The statement of Lemma \ref{specificsubmod} holds for every $G$-supermodule $M$.
\end{cor}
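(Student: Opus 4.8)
The plan is to reduce to the finite-dimensional case already handled in Lemma~\ref{specificsubmod} by a standard direct-limit argument. Recall that any $G$-supermodule $M$, being a $K[G]$-supercomodule, is locally finite: every homogeneous element of $M$ lies in a finite-dimensional $G$-subsupermodule (this is the same local finiteness that is invoked, for instance, in the proof of Proposition~3.4 of \cite{zub1}, used above in Lemma~\ref{injandproj2}). Thus I would write $M=\bigcup_{\alpha} M_{\alpha}$, where the union runs over all finite-dimensional $G$-subsupermodules $M_{\alpha}$ of $M$, directed by inclusion.

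Next I would identify $M_N$ with $\sum_{\alpha}(M_{\alpha})_{N}$. By definition $M_N=M_{Dist(N)}$ is the subsuperspace spanned by the elements $rm$ with $r\in Dist(N)^{+}$ and $m\in M$. Given such an element, pick an index $\alpha$ with $m\in M_{\alpha}$; then $rm\in (M_{\alpha})_{Dist(N)}=(M_{\alpha})_{N}$. Hence $M_N\subseteq\sum_{\alpha}(M_{\alpha})_{N}$, and the reverse inclusion is immediate since each $(M_{\alpha})_{N}\subseteq M_N$, so that $M_N=\sum_{\alpha}(M_{\alpha})_{N}$.

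Finally, since $G$ is connected, Lemma~\ref{specificsubmod} applies to each finite-dimensional $M_{\alpha}$ and shows that $(M_{\alpha})_{N}$ is a $G$-subsupermodule of $M_{\alpha}$, hence also a $G$-subsupermodule of $M$. It remains to observe that an arbitrary sum of $G$-subsupermodules of $M$ is again a $G$-subsupermodule: if $\tau_M(U_{\alpha})\subseteq U_{\alpha}\otimes K[G]$ for each $\alpha$, then $\tau_M\!\left(\sum_{\alpha}U_{\alpha}\right)\subseteq\sum_{\alpha}\bigl(U_{\alpha}\otimes K[G]\bigr)=\left(\sum_{\alpha}U_{\alpha}\right)\otimes K[G]$. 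Therefore $M_N=\sum_{\alpha}(M_{\alpha})_{N}$ is a $G$-subsupermodule of $M$, which is the assertion. The only substantive input is the local finiteness of $G$-supermodules; everything else is routine bookkeeping, so I do not expect a genuine obstacle here.
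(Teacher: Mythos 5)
Your proposal is correct and follows essentially the same route as the paper: write $M$ as the directed union of its finite-dimensional subsupermodules, note that $M_N$ is the union (sum) of the $(M_{\alpha})_N$, and apply Lemma \ref{specificsubmod} to each piece. The extra verifications you include (local finiteness and that a sum of subsupermodules is a subsupermodule) are exactly the routine steps the paper leaves implicit.
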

\begin{proof}
The supermodule $M$ coincides with a union of its finite-dimensional subsupermodules $M_{\alpha}$, where $\alpha$ runs over a directed set $I$ such that $M_{\alpha}\subseteq M_{\beta}$ if and only if $\alpha\leq\beta$. We have 
$M_N=\bigcup_{\alpha\in I} (M_{\alpha})_N$. By Lemma \ref{specificsubmod}, each $(M_{\alpha})_N$ is a $G$-subsupermodule. Thus $M_N$ is a $G$-subsupermodule.
\end{proof}
\begin{rem}\label{asubmodule}
$M_N$ is the smallest subsupermodule of $G$ such that $N$ acts identically on $M/M_N$.
\end{rem}
\begin{rem}\label{inflation/restriction}
Let $\pi : G\to H$ be an epimorphism of algebraic supergroups. Then
every $H$-supermodule $M$ can be regarded as a $G$-supermodule via $\pi$. In \cite{don3} this $G$-supermodule is denoted by $\pi_0(M)$. Additionally, if $G=N\rtimes H$, then 
$\pi_0(M)|_{H}\simeq M$. 
\end{rem}

From now on assume that $G=GL(m|n)$ and $P=Stab_G(V_1)$. Let $U$ denote the kernel of $P\to G_{ev}$. Then
$P=U\rtimes G_{ev}$ (cf. Remark 5.2 of \cite{zub1}).
Symmetrically, denote $Stab_G(V_0)$ by $P^{opp}$. As above, we have an epimorphism $P^{opp}\to G_{ev}$ and its kernel is denoted by 
$U^{opp}$. Besides, $P^{opp}=U^{opp}\rtimes G_{ev}$. Both supergroups $U$ and $U^{opp}$ are obviously infinitesimal and unipotent.
 
A standard supermodule $V(\lambda)$ can be also defined as a universal supermodule of the highest weight $\lambda$. In other words, 
$V(\lambda)$ is generated by a $B^{opp}$-{\it primitive vector} of weight $\lambda$ and if a $G$-supermodule $M$ is generated by a $B^{opp}$-primitive vector of weight $\lambda$, then there is an epimorphism $V(\lambda)\to M$.

By Remark \ref{inflation/restriction}, a $G_{ev}$-(super)module $N$ can be regarded as a $P^{opp}$-supermodule as well. By Corollary 3.5 of \cite{brunkuj}, $Dist(G)\otimes_{Dist(P^{opp})} N$ is a $G$-supermodule.
\begin{lm}\label{Weylmod}
For every dominant weight $\lambda$, the Weyl supermodule $V(\lambda)$ is isomorphic to $Dist(G)\otimes_{Dist(P^{opp})} V_{ev}(\lambda)$.
\end{lm}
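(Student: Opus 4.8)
The plan is to identify $V(\lambda)$ with $Dist(G)\otimes_{Dist(P^{opp})} V_{ev}(\lambda)$ by exhibiting the right-hand side as a $G$-supermodule that is generated by a $B^{opp}$-primitive vector of weight $\lambda$, and then invoking the universal property of the Weyl supermodule recalled just before the statement. First I would recall that $V_{ev}(\lambda)$, regarded as a $G_{ev}$-module, is generated by its highest weight vector $v_\lambda$, which is $B^{opp}_{ev}$-primitive of weight $\lambda$ (here $B^{opp}_{ev}$ is the opposite Borel of $G_{ev}$). Via Remark~\ref{inflation/restriction} we inflate $V_{ev}(\lambda)$ along $P^{opp}\to G_{ev}$; since $U^{opp}=\ker(P^{opp}\to G_{ev})$ acts trivially, $v_\lambda$ remains primitive, now for the Borel $B^{opp}=U^{opp}\rtimes B^{opp}_{ev}$ of $P^{opp}$. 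Then $1\otimes v_\lambda\in Dist(G)\otimes_{Dist(P^{opp})} V_{ev}(\lambda)$ is a $B^{opp}$-primitive vector of weight $\lambda$ (note $B^{opp}\subseteq P^{opp}$, so primitivity is preserved under the tensor-over-$Dist(P^{opp})$ construction, using that $Dist(B^{opp})\subseteq Dist(P^{opp})$ and $1\otimes$ intertwines).

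The key structural input is that $Dist(G)\otimes_{Dist(P^{opp})} V_{ev}(\lambda)$ is genuinely a $G$-supermodule, which is exactly the content of Corollary~3.5 of \cite{brunkuj} cited in the paragraph above the statement. Granting that, by the universal property of $V(\lambda)$ we get a $G$-supermodule epimorphism $V(\lambda)\twoheadrightarrow G\text{-subsupermodule generated by }1\otimes v_\lambda$, and in fact I would want to argue that $1\otimes v_\lambda$ generates the whole of $Dist(G)\otimes_{Dist(P^{opp})} V_{ev}(\lambda)$ as a $G$-supermodule: indeed $Dist(P^{opp})\cdot v_\lambda = V_{ev}(\lambda)$ (since $v_\lambda$ generates $V_{ev}(\lambda)$ over $Dist(G_{ev})$, hence over $Dist(P^{opp})$ by inflation), so $Dist(G)\cdot(1\otimes v_\lambda) = Dist(G)\otimes_{Dist(P^{opp})} Dist(P^{opp})v_\lambda = Dist(G)\otimes_{Dist(P^{opp})} V_{ev}(\lambda)$. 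This produces a surjection $\theta: V(\lambda)\twoheadrightarrow Dist(G)\otimes_{Dist(P^{opp})} V_{ev}(\lambda)$.

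It remains to show $\theta$ is an isomorphism, i.e.\ the two supermodules have the same dimension (or the same formal character). The natural comparison goes through the triangular decomposition $Dist(G)\cong Dist(U^{opp})\otimes Dist(G_{ev})$ — or more precisely, using the superscheme decomposition analogous to $B_r\times V^{opp}_r\to G_r$ but at the level of $P^{opp}$ and its complement inside $G$ — which gives $Dist(G)\otimes_{Dist(P^{opp})} V_{ev}(\lambda)\cong Dist(G/P^{opp})\otimes V_{ev}(\lambda)$ as superspaces, and $G/P^{opp}$ is the ``odd affine space'' $\mathbb{A}^{0|mn}$, so $\dim Dist(G)\otimes_{Dist(P^{opp})} V_{ev}(\lambda) = 2^{mn}\dim V_{ev}(\lambda)$. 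On the other side, the standard description of $V(\lambda)$ (e.g.\ via its character, or via $V(\lambda)|_{G_{ev}}$ having a filtration by $G_{ev}$-Weyl modules as in \cite{zub1}) gives the matching dimension $2^{mn}\dim V_{ev}(\lambda)$; alternatively one dualizes via the self-duality $M\mapsto M^{<t>}$ and the known fact $H^0(\lambda)^{<t>}=V(\lambda)$ together with $H^0(\lambda)\cong ind^G_{P} H^0_{ev}(\lambda)$, which by the tensor identity has the dual dimension. I expect the main obstacle to be precisely this dimension/character bookkeeping: making rigorous the superspace isomorphism $Dist(G)\otimes_{Dist(P^{opp})} (-)\cong Dist(G/P^{opp})\otimes(-)$ and matching it against the known character of $V(\lambda)$ — the primitive-vector and surjectivity parts are formal, but pinning down that $\theta$ is injective (equivalently, not collapsing any part of the Weyl module) requires the explicit ``parabolic induction'' structure of both sides.
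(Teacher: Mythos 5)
Your argument is essentially the paper's own proof: the paper likewise notes that $M=Dist(G)\otimes_{Dist(P^{opp})}V_{ev}(\lambda)$ is generated by a primitive vector of weight $\lambda$, hence is an epimorphic image of $V(\lambda)$, and then concludes by comparing formal characters (citing Proposition 5.9 and Theorem 5.4 of \cite{zub1}), which is exactly your dimension bookkeeping in character form. One cosmetic slip: the relevant free decomposition is $Dist(G)\simeq Dist(U)\otimes Dist(P^{opp})$ (the complement of $P^{opp}$ in $G$ is $U$, not $U^{opp}$; your displayed formula is a decomposition of $Dist(P^{opp})$ itself), but since $\dim Dist(U)=\dim Dist(U^{opp})=2^{mn}$ and your ``more precise'' version via $G/P^{opp}\simeq \mathbb{A}^{0|mn}$ is correct, the count $2^{mn}\dim V_{ev}(\lambda)$ and the conclusion are unaffected.
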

\begin{proof}
Denote a $G$-supermodule $Dist(G)\otimes_{Dist(P^{opp})} V_{ev}(\lambda)$ by $M$.
The formal character of $M$ coincides with the formal character of $V(\lambda)$ 
(cf. Proposition 5.9 and Theorem 5.4 of \cite{zub1}). 
Since $M$ is generated by a primitive vector of weight $\lambda$, 
it is an epimorphic image of $V(\lambda)$. Therefore $M\simeq V(\lambda)$. 
\end{proof}
\begin{pr}\label{sequence}
For each $G$-supermodule $M$ and each $G_{ev}$-supemodule $N$ there is the following spectral sequence
$$E^{n,m}_2=Ext^n_{G_{ev}}(N, H^m(U^{opp}, M))\Rightarrow Ext^{n+m}_{G}(Dist(G)\otimes_{Dist(P^{opp})} N, M).$$
\end{pr}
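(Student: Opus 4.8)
The plan is to obtain the spectral sequence as a Grothendieck spectral sequence for a composition of functors, exactly as one does for the Lyndon--Hochschild--Serre spectral sequence but in the induced-module setting. Write $G = GL(m|n)$, $P = P^{opp}$, $U = U^{opp}$, so that $P = U \rtimes G_{ev}$. The key identity I want is an adjunction of the form
\[
Hom_G\bigl(Dist(G)\otimes_{Dist(P)} N,\, M\bigr)\;\simeq\; Hom_{G_{ev}}\bigl(N,\, M^{U}\bigr),
\]
natural in both arguments. The left-hand side is (tensor--hom) Frobenius reciprocity for the induction functor $Dist(G)\otimes_{Dist(P)}(-)$ from $P$-supermodules to $G$-supermodules, which by Corollary 3.5 of \cite{brunkuj} is the relevant induction; it gives $Hom_P(N, M|_P)$. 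Then, since $N$ is inflated from $G_{ev}$ via $P \to G_{ev}$ (Remark \ref{inflation/restriction}) and $U$ acts trivially on $N$, every $P$-homomorphism $N \to M|_P$ lands in the $U$-fixed points $M^U$, and conversely; using $P = U \rtimes G_{ev}$ one identifies $Hom_P(N, M|_P) = Hom_{G_{ev}}(N, M^U)$. This is the composite of the exact functor $M \mapsto M|_P$ followed by $(-)^U$ followed by $Hom_{G_{ev}}(N,-)$, or more precisely of the two functors $M \mapsto M^U$ (from $G$-supermodules, or $P$-supermodules, to $G_{ev}$-supermodules) and $Hom_{G_{ev}}(N,-)$.

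Next I would check the hypotheses of the Grothendieck spectral sequence. The functor $M \mapsto M^U = H^0(U, M)$ is left exact with right derived functors $H^m(U, M)$; the functor $Hom_{G_{ev}}(N, -)$ is left exact with right derived functors $Ext^n_{G_{ev}}(N, -)$. What remains is that $(-)^U$ carries injective $G$-supermodules to $Hom_{G_{ev}}(N,-)$-acyclic objects, i.e. to objects $J$ with $Ext^n_{G_{ev}}(N, J) = 0$ for $n > 0$. For this it is enough that $M^U$ is an injective $G_{ev}$-supermodule whenever $M$ is an injective $G$-supermodule. Since $U$ is a normal (indeed infinitesimal and unipotent) subsupergroup of $P$, and $P/U \simeq G_{ev}$, one has the standard fact (cf. the proof of Lemma \ref{reduction}, using the Hochschild--Serre spectral sequence of Proposition 3.1(3) of \cite{zubscal} for $U \unlhd P$) that $M|_P$ injective over $P$ implies $M^U$ injective over $G_{ev}$; and $M|_P$ is injective over $P$ because $U$ is faithfully exact in $G$ — $G/P$ is affine as $P$ is a parabolic-type subsupergroup, so restriction $G\text{-}\mathsf{SMod} \to P\text{-}\mathsf{SMod}$ preserves injectives. (Alternatively one argues directly: $U$ being faithfully exact in $P$ means $H^m(U,-)$ is just restriction of the injective resolution, and $U$ acting trivially makes $M^U$ have an injective $G_{ev}$-resolution.) Feeding this into the Grothendieck spectral sequence gives
\[
E_2^{n,m} = Ext^n_{G_{ev}}\bigl(N,\, H^m(U^{opp}, M)\bigr) \;\Longrightarrow\; Ext^{n+m}_{G}\bigl(Dist(G)\otimes_{Dist(P^{opp})} N,\, M\bigr),
\]
as claimed.

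I would organize the write-up as: first establish the adjunction isomorphism above on the level of $Hom$'s, carefully tracking that inflation and the semidirect-product splitting let one replace $Hom_P$ by $Hom_{G_{ev}}(-, (-)^U)$; second, record that $G\text{-}\mathsf{SMod}$ has enough injectives (it is a comodule category over $K[G]$, hence the derived functors $Ext^*_G$ are computed by injective resolutions, and these restrict to $P$-injective resolutions since $G/P^{opp}$ is affine and $U^{opp}$ is hence faithfully exact); third, verify the acyclicity condition $Ext^n_{G_{ev}}(N, M^{U^{opp}}) = 0$ for $n\geq 1$ and $M$ injective; fourth, invoke the Grothendieck spectral sequence for the composite of functors $M \mapsto M^{U^{opp}}$ and $Hom_{G_{ev}}(N,-)$.

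The main obstacle I expect is the acyclicity step — showing $M^{U^{opp}}$ is $Hom_{G_{ev}}(N,-)$-acyclic for injective $G$-supermodules $M$. The cleanest route is via the chain: $M$ injective over $G$ $\Rightarrow$ $M|_{P^{opp}}$ injective over $P^{opp}$ (faithful exactness of $U^{opp}$, equivalently affineness of $G/P^{opp}$, which follows from $P^{opp} = U^{opp}\rtimes G_{ev}$ together with the criterion that $G/H$ is affine iff $G_{ev}/H_{ev}$ is, and here $H_{ev} = G_{ev}$ so the quotient is a point) $\Rightarrow$ $M^{U^{opp}}$ injective over $P^{opp}/U^{opp} = G_{ev}$ (the $U^{opp}\unlhd P^{opp}$ Hochschild--Serre degeneration argument of Lemma \ref{reduction}, $U^{opp}$ being unipotent and infinitesimal) $\Rightarrow$ $Ext^n_{G_{ev}}(N, M^{U^{opp}}) = 0$ for $n\geq 1$. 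One subtlety to handle with care: $M$ is a priori an infinite-dimensional $G$-supermodule, so one should either work with the pseudocompact/comodule formalism throughout or reduce to finite-dimensional pieces via a directed colimit as in the proof of Corollary \ref{corroftheabovelemma}; I would reduce to the finite-dimensional case where the dualities of \S8 are available and all the cited results apply directly.
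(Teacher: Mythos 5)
Your proposal is correct and is essentially the paper's own proof: the paper likewise uses (co)Frobenius reciprocity together with the exactness of restriction to $P^{opp}$ (since $P^{opp}_{ev}=G_{ev}$ forces $G/P^{opp}$ to be an affine superscheme) to identify $Ext^{\bullet}_{G}(Dist(G)\otimes_{Dist(P^{opp})}N, M)$ with $Ext^{\bullet}_{P^{opp}}(N, M|_{P^{opp}})$, and then simply cites Proposition 3.1(2) of \cite{zubscal} for $U^{opp}\unlhd P^{opp}$ — exactly the Grothendieck composite-functor spectral sequence you reconstruct by hand. Two small touch-ups: the relevant exactness is that of $P^{opp}$ in $G$ (not ``faithful exactness of $U^{opp}$''), and in the acyclicity step you should not lean on Lemma \ref{reduction} (it is stated for unipotent ambient supergroups, and mere vanishing of $H^{n}(G_{ev}, M^{U^{opp}})$ would not give $Ext^{n}_{G_{ev}}(N,-)$-acyclicity for arbitrary $N$); the clean argument is that $(-)^{U^{opp}}$ is right adjoint to the exact inflation functor along $P^{opp}\to P^{opp}/U^{opp}=G_{ev}$, hence carries injective $P^{opp}$-supermodules to injective $G_{ev}$-supermodules.
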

\begin{proof}
Using (co)Frobenius reciprocity law we have
$$Hom_G(Dist(G)\otimes_{Dist(P^{opp})} N, M)\simeq Hom_{P^{opp}}(N, M|_{P^{opp}}).$$
Let $I^{\bullet}$ be an injective resolution of $G$-supermodule $M$.
Since $P^{opp}_{ev}=G_{ev}$, $G/P^{opp}$ is an affine superscheme, hence $P^{opp}$ is exact (even faithfully exact) in $G$ (see also Proposition 5.1 of \cite{zub1}). Thus $I^{\bullet}|_{P^{opp}}$ is an injective resolution of $P_{opp}$-supermodule
$M|_{P^{opp}}$ such that the complexes $Hom_G(Dist(G)\otimes_{Dist(P^{opp})} N, I^{\bullet})$ and $Hom_{P^{opp}}(N, I^{\bullet}|_{P^{opp}})$ are isomorphic to each other. Then
$$Ext^i_G(Dist(G)\otimes_{Dist(P^{opp})} N, M)\simeq Ext^i_{P^{opp}}(N, M|_{P^{opp}}).$$
The statement now follows from Proposition 3.1 (2) of \cite{zubscal}.
\end{proof}
\begin{rem}\label{aconsequence}
If $M|_{U^{opp}}$ is an injective $U^{opp}$-supermodule, then the above spectral sequence degenerates and there is 
an isomorphism  
$$Ext^n_{G_{ev}}(N, M^{U^{opp}})\simeq Ext^{n}_{G}(Dist(G)\otimes_{Dist(P^{opp})} N, M)$$
for all $n\geq 0$.
\end{rem}
By Lemma 5.1 of \cite{zub1}, for every $P$-supermodule $M$ there is an isomorphism of $U^{opp}$-supermodules $M\otimes K[U^{opp}]\to ind^G_P M$. Moreover, 
it is also an isomorphism of $G_{ev}$-(super)modules, where $G_{ev}$ acts on
$K[U^{opp}]$ by conjugations. In particular, $ind^G_P M$ is an injective $U^{opp}$-supermodule and the $G_{ev}$-supermodule
$(ind^G_P M)^{U^{opp}}$ is isomorphic to $M|_{G_{ev}}$.

For example, Lemma 5.2 from \cite{zub1} states $H^0(\lambda)\simeq ind^G_P H^0_{ev}(\lambda)$.  Thus $H^0(\lambda)$ is an injective $U^{opp}$-supermodule and $H^0(\lambda)^{U^{opp}}\simeq H^0_{ev}(\lambda)$. These elementary observations inspire the following theorem.

Let $\Gamma$ be a finitely generated ideal of (dominant) weights and $M$ be a $\Gamma$-restricted $G$-supermodule (cf. \cite{zubmar}). 
\begin{tr}\label{goodfiltr}
$M$ has a decreasing good filtration if and only if
$G_{ev}$-supermodule (or $G_{res}$-module) $M^{U^{opp}}$ has a decreasing good filtration and $H^1(U^{opp}, M)=0$.
\end{tr}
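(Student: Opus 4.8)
The plan is to reduce the statement about $G$-supermodules with a decreasing good filtration to a statement about $G_{ev}$-supermodules by exploiting the structure $P^{opp}=U^{opp}\rtimes G_{ev}$ together with the spectral sequence of Proposition~\ref{sequence}. The starting point is the elementary observation preceding the theorem: for a $P$-supermodule $M$ we have $ind^G_P M\simeq M\otimes K[U^{opp}]$ as $U^{opp}$-supermodules (and as $G_{ev}$-supermodules with $G_{ev}$ acting by conjugation on $K[U^{opp}]$), so that $H^0(\lambda)\simeq ind^G_P H^0_{ev}(\lambda)$ is injective over $U^{opp}$ with $H^0(\lambda)^{U^{opp}}\simeq H^0_{ev}(\lambda)$. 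More generally I would first establish the analogous dual fact for $V(\lambda)$: by Lemma~\ref{Weylmod}, $V(\lambda)\simeq Dist(G)\otimes_{Dist(P^{opp})} V_{ev}(\lambda)$, and I would want a ``highest weight'' compatibility statement saying that the costandard objects $H^0(\lambda)$ are precisely those $\Gamma$-restricted $G$-supermodules $N$ with $N|_{U^{opp}}$ injective, $N^{U^{opp}}\simeq H^0_{ev}(\lambda)$, and $N$ generated in the appropriate weight. This, together with the fact that $\Gamma$ is finitely generated (so all the relevant modules have finite composition series in each graded piece), is the bridge between the two highest weight categories.

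The key technical input is Remark~\ref{aconsequence}: if $M|_{U^{opp}}$ is injective, then $Ext^n_{G_{ev}}(N, M^{U^{opp}})\simeq Ext^n_G(Dist(G)\otimes_{Dist(P^{opp})} N, M)$ for all $n\geq 0$. Applying this with $N=V_{ev}(\lambda)$ and using Lemma~\ref{Weylmod} gives $Ext^n_{G_{ev}}(V_{ev}(\lambda), M^{U^{opp}})\simeq Ext^n_G(V(\lambda), M)$ whenever $H^1(U^{opp},M)=0$ (note one must first upgrade $H^1(U^{opp},M)=0$ to $H^m(U^{opp},M)=0$ for all $m\geq 1$, which follows because $U^{opp}$ is unipotent and infinitesimal via Lemma~\ref{injandproj2}: vanishing of $H^1$ over a unipotent infinitesimal supergroup forces injectivity, hence vanishing of all higher cohomology). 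Since a $\Gamma$-restricted $G$-supermodule (resp. $G_{ev}$-supermodule) has a decreasing good filtration if and only if $Ext^1$ against every standard object vanishes — this is the homological characterization of good filtrations in the highest weight category, cf. \S4 of \cite{zubmar} — the isomorphism $Ext^1_G(V(\lambda),M)\simeq Ext^1_{G_{ev}}(V_{ev}(\lambda),M^{U^{opp}})$ immediately yields: if $H^1(U^{opp},M)=0$ and $M^{U^{opp}}$ has a good filtration, then $Ext^1_G(V(\lambda),M)=0$ for all $\lambda\in\Gamma$, so $M$ has a decreasing good filtration.

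For the converse, suppose $M$ has a decreasing good filtration. I would first show $H^1(U^{opp},M)=0$: since $M$ is filtered by the $H^0(\lambda^a)$, and each $H^0(\lambda)\simeq ind^G_P H^0_{ev}(\lambda)$ is injective (hence acyclic) over $U^{opp}$, the long exact sequences in $U^{opp}$-cohomology (using that good filtrations are compatible with the finitely generated ideal $\Gamma$, so one can induct on the length of an exhausting filtration in each $\Gamma$-layer and pass to limits) give $H^m(U^{opp},M)=0$ for all $m\geq 1$. Granting this, Remark~\ref{aconsequence} applies, and taking $U^{opp}$-fixed points of a good filtration of $M$ produces a filtration of $M^{U^{opp}}$ whose sections are the $(H^0(\lambda))^{U^{opp}}\simeq \Pi^a H^0_{ev}(\lambda)$ — so $M^{U^{opp}}$ has a decreasing good filtration over $G_{ev}$. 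The main obstacle I anticipate is handling the \emph{infinite} filtrations and the bookkeeping with the ideal $\Gamma$ correctly: one must make sure that taking $U^{opp}$-invariants is exact on the relevant modules (which is exactly where $H^1(U^{opp},-)=0$ is used, so there is a slight circularity to untangle by first proving the cohomology vanishing independently, layer by layer within $\Gamma$), and that ``decreasing good filtration'' in the pseudocompact/infinite sense of \cite{zubmar} is genuinely detected by the $Ext^1$-vanishing criterion in both categories simultaneously. Once those foundational points from \S4 of \cite{zubmar} are invoked cleanly, the argument is the expected Frobenius-reciprocity-plus-spectral-sequence package.
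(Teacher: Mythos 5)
Your ``if'' direction is sound and essentially the paper's argument: from $H^1(U^{opp},M)=0$ you may indeed invoke Lemma \ref{injandproj2} (since $U^{opp}$ is unipotent) to get injectivity of $M|_{U^{opp}}$, then Remark \ref{aconsequence} together with Lemma \ref{Weylmod} identifies $Ext^1_{G_{ev}}(V_{ev}(\lambda),M^{U^{opp}})$ with $Ext^1_G(V(\lambda),M)$, and the $Ext^1$-vanishing criterion from \S 4 of \cite{zubmar} finishes it. (The paper gets the same conclusion slightly more economically from the five-term exact sequence of Proposition \ref{sequence}, without needing to upgrade $H^1$-vanishing to injectivity first.)

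The gap is in your ``only if'' direction. You propose to prove $H^1(U^{opp},M)=0$ and the exactness of $(-)^{U^{opp}}$ by d\'evissage along the good filtration and then ``pass to limits''. For a finite filtration this is exactly Remark \ref{asinjective} and works, but $M$ is only assumed $\Gamma$-restricted and its good filtration is a \emph{decreasing}, possibly infinite, chain $M=M_0\supseteq M_1\supseteq\cdots$ with $\bigcap M_i=0$. Cohomology over $U^{opp}$ commutes with directed unions, not with the inverse limits implicit here: knowing $H^{\geq 1}(U^{opp},M/M_i)=0$ for all $i$ only exhibits every class of $H^1(U^{opp},M)$ as coming from $H^1(U^{opp},M_i)$ for each $i$, and without a Mittag--Leffler/$\varprojlim^1$ argument (which you do not supply) this does not force vanishing; the same unproven exactness is needed when you take $U^{opp}$-invariants of the filtration to produce a good filtration of $M^{U^{opp}}$. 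You flag the circularity but do not resolve it, and the auxiliary ``highest weight compatibility'' characterization of $H^0(\lambda)$ in your first paragraph is neither established nor needed. The paper avoids all of this by running the homological criterion in the converse direction as well: a decreasing good filtration gives $Ext^i_G(V(\lambda),M)=0$ for all $i\geq 1$ (Theorem 4.9 and Lemma 4.19 of \cite{zubmar}), so the five-term exact sequence yields $Ext^1_{G_{ev}}(V_{ev}(\lambda),M^{U^{opp}})=0$ for all $\lambda$ (whence the good filtration of $M^{U^{opp}}$) and $Hom_{G_{ev}}(V_{ev}(\lambda),H^1(U^{opp},M))=0$ for all $\lambda$; if $H^1(U^{opp},M)\neq 0$ its socle would contain some $L_{ev}(\lambda)$, giving a nonzero such $Hom$, a contradiction. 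Replacing your d\'evissage by this $Ext$/socle argument closes the gap.
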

\begin{proof}
Apply Proposition \ref{sequence} to $N=V_{ev}(\lambda)$ and obtain the five-term exact sequence (cf. p.50 of \cite{jan}):
$$0\to Ext^1_{G_{ev}}(V_{ev}(\lambda), M^{U^{opp}})\to Ext^1_{G}(V(\lambda), M)\to
Hom_{G_{ev}}(V_{ev}(\lambda), H^1(U^{opp}, M))\to
$$
$$
\to Ext^2_{G_{ev}}(V_{ev}(\lambda), M^{U^{opp}})\to Ext^2_{G}(V(\lambda), M).
$$
By Theorem 4.9 and Lemma 4.19 of \cite{zubmar}, $M$ has a decreasing good filtration if and only if $Ext^1_{G}(V(\lambda), M)=0$ for every (dominant) weight
$\lambda$. Furthemore, in this case $Ext^i_{G}(V(\lambda), M)=0$ for every $i\geq 1$.
Thus  $Ext^1_{G_{ev}}(V_{ev}(\lambda), M^{U^{opp}})=0$, and $M^{U^{opp}}$ has a decreasing good filtration as a $G_{ev}$-supermodule, provided $M$ has a decreasing good filtration
as a $G$-supermodule. 
The latter condition implies $Ext^2_{G_{ev}}(V_{ev}(\lambda), M^{U^{opp}})=0$ and  
$Hom_{G_{ev}}(V_{ev}(\lambda), H^1(U^{opp}, M))=0$ for every 
$\lambda$. On the other hand, if $H^1(U^{opp}, M)\neq 0$, then there is a weight $\lambda$ such that $L_{ev}(\lambda)$ is a direct summand of $soc(H^1(U^{opp}, M))$. Thus
$Hom_{G_{ev}}(V_{ev}(\lambda), H^1(U^{opp}, M))\neq 0$, which is a contradiction. 
Conversely, if $M$ satisfies the conditions of this theorem, then the first and the third member of the above exact sequence are zeroes, hence $Ext^1_G(M, V(\lambda))=0$.
\end{proof}
\begin{rem}\label{asinjective}
By Lemmas \ref{injandproj} and \ref{injandproj2}, $M|_{U^{opp}}$ is injective and isomorphic to  $K[U^{opp}]^{\oplus I}\bigoplus \Pi K[U^{opp}]^{\oplus J}$ for (possibly infinite) index sets $I$ and $J$. If $M$ is finite-dimensional, then the cardinalities $I^{\sharp}$ and $J^{\sharp}$ of $I$ and $J$ are given as
$$I^{\sharp}=\sum_{\lambda, (M : H^0(\lambda^0))\neq 0}\dim H^0_{ev}(\lambda) \mbox{ and } J^{\sharp}=\sum_{\lambda, (M : H^0(\lambda^1))\neq 0}\dim H^0_{ev}(\lambda).$$ 
In fact, if $M$ has a good filtration
$$0\subseteq M_1\subseteq\ldots\subseteq M_s=M$$
such that $M_i/M_{i-1}\simeq H^0(\lambda_i^{a_i})$, where $a_i=0, 1$ and $1\leq i\leq s$, then $M^{U^{opp}}$ has a good filtration
$$0\subseteq M_1^{U^{opp}}\subseteq\ldots\subseteq M_s^{U^{opp}}=M^{U^{opp}}$$
such that $M_i^{U^{opp}}/M_{i-1}^{U^{opp}}\simeq H_{ev}^0(\lambda^{a_i}_i)$, where $1\leq i\leq s$. Besides, $M\simeq\bigoplus_{1\leq i\leq s} M_i/M_{i-1}$ as $U^{opp}$-supermodule.
\end{rem}

Observe that the anti-automorphism $\phi\to \phi^{<t>}$ of $Dist(G)$  induces an anti-isomorphism between subsuperalgebras $Dist(U)$ and $Dist(U^{opp})$. 
Moreover, if $M$ is a $Dist(U)$-supermodule, then
the dual superspace $M^*$ has a natural structure of a $Dist(U^{opp})$-supermodule given by 
$$(\phi f)(m)=(-1)^{|\phi||f|}(\phi^{<t>} m), \mbox{ where } \phi\in Dist(U^{opp}), f\in V^* \mbox{ and } m\in M.$$
Symmetrically, if $M$ is a $Dist(U^{opp})$-supermodule, then $M^*$ has a structure of $Dist(U)$-supermodule via the same rule.
We denote $M^*$ by $M^{<t>}$, no matter over which superalgebra the supermodule $M$ is defined. The following lemma is now obvious.
\begin{lm}\label{self-dualitybetween}
The functor $M\mapsto M^{<t>}$ is an anti-equivalence between the categories of finite-dimensional $U$-supermodules and $U^{opp}$-supermodules.  
\end{lm}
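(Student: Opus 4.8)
The plan is to unwind the definitions and reduce everything to the anti-isomorphism $\phi\mapsto\phi^{<t>}$ between $Dist(U)$ and $Dist(U^{opp})$ that was established just above the statement. Recall that $U$ and $U^{opp}$ are both finite (infinitesimal) supergroups, so by Lemma~\ref{equivalence} the categories $U-\mathsf{SMod}$ and $U^{opp}-\mathsf{SMod}$ are equivalent to $_{Dist(U)}\mathsf{SMod}$ and $_{Dist(U^{opp})}\mathsf{SMod}$ respectively; on finite-dimensional objects these equivalences are parity-preserving. Hence it suffices to exhibit an anti-equivalence between the categories of finite-dimensional $Dist(U)$-supermodules and finite-dimensional $Dist(U^{opp})$-supermodules, and this is exactly what the displayed formula $(\phi f)(m)=(-1)^{|\phi||f|}f(\phi^{<t>}m)$ provides.

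First I would verify that for a finite-dimensional $Dist(U)$-supermodule $M$ the prescription above genuinely defines a $Dist(U^{opp})$-supermodule structure on $M^*=M^{<t>}$. This amounts to checking the module axiom $((\phi\psi)f)(m)=(\phi(\psi f))(m)$ for $\phi,\psi\in Dist(U^{opp})$, and here the sign bookkeeping uses precisely the fact that $\phi\mapsto\phi^{<t>}$ is an \emph{anti}-homomorphism, i.e. $(\phi\psi)^{<t>}=(-1)^{|\phi||\psi|}\psi^{<t>}\phi^{<t>}$, combined with the Koszul sign rule governing the pairing $f\mapsto f(m)$; this is the kind of routine but sign-delicate computation already performed in Lemma~\ref{actofdistontaudual}, and it is symmetric in $U$ and $U^{opp}$ because $t$ is an involution (up to the anti-automorphism property). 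Parity is clearly preserved since $M^*$ is graded with $(M^*)_i$ pairing with $M_i$, and the action of a homogeneous $\phi$ shifts degree by $|\phi|$ on both sides.

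Next I would check functoriality and that $M\mapsto M^{<t>}$ reverses arrows: a ($U$-, equivalently $Dist(U)$-) homomorphism $g\colon M\to N$ gives the transpose $g^*\colon N^*\to M^*$, and one checks $g^*$ intertwines the $Dist(U^{opp})$-actions using only that $g$ intertwines the $Dist(U)$-actions and that $\phi^{<t>}$ does not depend on $M$ or $N$. Applying the same construction in the reverse direction gives a functor $U^{opp}\text{-supermodules}\to U\text{-supermodules}$, and the double dual $M^{**}\cong M$ (canonically, in finite dimension) together with the fact that $(\phi^{<t>})^{<t>}=\phi$ (since $t^2=\mathrm{id}$ on the $c_{ij}$, hence on $Dist(G)$) shows that these two functors are mutually quasi-inverse as contravariant functors. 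Therefore $M\mapsto M^{<t>}$ is an anti-equivalence, as claimed.

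The only mild obstacle is the sign bookkeeping in verifying the module axiom and the intertwining property for morphisms; everything else is formal once one invokes Lemma~\ref{equivalence} to pass between supergroups and their distribution superalgebras and uses that $t$ induces an anti-isomorphism $Dist(U)\to Dist(U^{opp})$ that squares to the identity. Since the paper explicitly says ``The following lemma is now obvious,'' I would keep the write-up short: state that the construction is the one displayed above, note that it is a special case of (the symmetric version of) Lemma~\ref{actofdistontaudual} applied to the finite supergroups $U$ and $U^{opp}$, and observe that the involutivity of $t$ makes the two directions mutually inverse.
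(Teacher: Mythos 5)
Your argument is essentially the paper's: the paper's one-line proof likewise reduces to Lemma \ref{equivalence} via $Dist(U)=K[U]^*$ and $Dist(U^{opp})=K[U^{opp}]^*$, with the $Dist(U^{opp})$-action on $M^*$ given by the displayed formula through the anti-isomorphism $\phi\mapsto\phi^{<t>}$, so your write-up is a correct (more detailed) rendering of the same route. One small inaccuracy: $t^2$ is \emph{not} the identity on the odd generators, since $t^2(c_{ij})=(-1)^{|i|+|j|}c_{ij}$ (and correspondingly $(\phi^{<t>})^{<t>}=(-1)^{|\phi|}\phi$ on $Dist(G)$), so $t^2$ is the parity automorphism rather than $\mathrm{id}$; this does not damage the conclusion, because the resulting twist of the double dual is naturally isomorphic to the identity functor via $m\mapsto(-1)^{|m|}m$, so the two contravariant functors are still mutually quasi-inverse.
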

\begin{proof}
Since $Dist(U)=K[U]^*$ and $Dist(U^{opp})=K[U^{opp}]^*$, the result follows from Lemma \ref{equivalence}.
\end{proof}
Observe that if $M$ is a $G$-supermodule, then $M^{<t>}|_{U} = (M|_{U^{opp}})^{<t>}$, and symmetrically, 
$M^{<t>}|_{U^{opp}} = (M|_{U})^{<t>}$.
\begin{pr}\label{duality}
Let $M$ be a finite-dimensional $U$-supermodule. Then for every $k\geq 0$ there is a natural isomorphism 
of superspaces $H_k(U, M)^{<t>}\simeq H^k(U^{opp}, M^{<t>})$. Moreover, if $M$ is a $G$-supermodule, then 
it is an isomorphism of $G_{ev}$-supermodules.
\end{pr}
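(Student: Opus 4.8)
The plan is to derive the stated isomorphism from the duality between $U$ and $U^{opp}$ established in Lemma \ref{self-dualitybetween}, together with the standard identifications of the homology and cohomology of a (finite) infinitesimal unipotent supergroup in terms of $\mathrm{Tor}$ and $\mathrm{Ext}$ over its superalgebra of distributions. Concretely, since $U$ is finite and infinitesimal, the category $U-\mathsf{SMod}$ is equivalent to ${}_{Dist(U)}\mathsf{SMod}$ by Lemma \ref{equivalence}, and $H_k(U,M)=H_k(F(P^\bullet_M))$ for a projective resolution $P^\bullet_M\to M\to 0$, where $F(M)=M/M_{Dist(U)}=K\otimes_{Dist(U)}M$. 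Thus $H_k(U,M)\simeq \mathrm{Tor}_k^{Dist(U)}(K,M)$. Dually, $H^k(U^{opp},N)\simeq \mathrm{Ext}^k_{Dist(U^{opp})}(K,N)$.

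The key steps are as follows. First, I would fix a projective resolution $P^\bullet\to M\to 0$ in ${}_{Dist(U)}\mathsf{Smod}$; by Lemma \ref{injandproj}, one may take each $P^i$ to be a finite direct sum of copies of $Dist(U)$ and $\Pi Dist(U)$, since $M$ is finite-dimensional and $U$ is infinitesimal. Applying the self-duality functor $(-)^{<t>}$ of Lemma \ref{self-dualitybetween} to this resolution produces a complex $0\to M^{<t>}\to (P^\bullet)^{<t>}$; because $(-)^{<t>}$ is an exact anti-equivalence, this is an injective resolution of the $U^{opp}$-supermodule $M^{<t>}$, with each term $(P^i)^{<t>}$ a finite sum of copies of $Dist(U)^{<t>}=K[U^{opp}]$ (up to parity shifts), hence injective over $U^{opp}$ by Lemma \ref{injandproj}. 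Second, I would check that the functor $F(-)=K\otimes_{Dist(U)}(-)$ applied to $P^\bullet$ is carried, under $(-)^{<t>}$, to the complex $\mathrm{Hom}_{U^{opp}}(K,(P^\bullet)^{<t>})$ computing $H^\bullet(U^{opp},M^{<t>})$ — this is the crux: one needs the natural identification $(M/M_{Dist(U)})^{<t>}\simeq (M^{<t>})^{U^{opp}}$, equivalently $(M_{Dist(U)})^\perp = (M^{<t>})^{U^{opp}}$ inside $M^*$, which follows from the explicit action $(\phi f)(m)=(-1)^{|\phi||f|}f(\phi^{<t>}m)$ and the fact that $\phi\mapsto\phi^{<t>}$ maps $Dist(U)^+$ onto $Dist(U^{opp})^+$ (it is an anti-isomorphism preserving the augmentation, since $t$ fixes $1$). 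Third, assembling these, $H_k(U,M)^{<t>}=H_k(F(P^\bullet))^{<t>}\simeq H^k\big((F(P^\bullet))^{<t>}\big)\simeq H^k\big(\mathrm{Hom}_{U^{opp}}(K,(P^\bullet)^{<t>})\big)=H^k(U^{opp},M^{<t>})$, where the middle isomorphism uses that the contravariant exact functor $(-)^{<t>}$ turns the homology of a complex into the cohomology of the dual complex.

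For the last sentence — naturality as $G_{ev}$-supermodules when $M$ is a $G$-supermodule — I would note that for a $G$-supermodule $M$ one has $M^{<t>}|_U=(M|_{U^{opp}})^{<t>}$ and $M^{<t>}|_{U^{opp}}=(M|_U)^{<t>}$ (observed just before Proposition \ref{duality}), that $U$ and $U^{opp}$ are normal in $P$ and $P^{opp}$ with quotient $G_{ev}$, and that the bar-type resolution $P^\bullet$ can be chosen $G_{ev}$-equivariantly (the standard $Dist(U)$-free resolution of $M$ built from $Dist(U)^{\otimes\bullet}\otimes M$ carries a compatible $G_{ev}$-action since $G_{ev}$ normalizes $U$ inside $P$). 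The functor $(-)^{<t>}$ is then $G_{ev}$-equivariant because $t$ restricted to $Dist(G_{ev})$ is the standard transpose anti-automorphism and the $G_{ev}$-action on $H_k(U,M)$ and $H^k(U^{opp},M^{<t>})$ is induced functorially; hence each isomorphism in the chain above is $G_{ev}$-linear.

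I expect the main obstacle to be the second step: pinning down precisely that $F(-)^{<t>}$ agrees with $\mathrm{Hom}_{U^{opp}}(K,-)$ on the dualized resolution, i.e.\ checking the sign-laden identification $(M/M_{Dist(U)})^{<t>}\simeq (M^{<t>})^{U^{opp}}$ is natural and compatible with the differentials, and — for the $G_{ev}$-equivariant refinement — arranging a resolution that is simultaneously $Dist(U)$-projective and $G_{ev}$-equivariant so that all maps in sight are $G_{ev}$-morphisms. The purely superspace-level statement is essentially formal once the $(-)^{<t>}$ vs.\ $(-)^*$ bookkeeping is in place; the equivariance is where care with the $P^{opp}=U^{opp}\rtimes G_{ev}$ structure is needed.
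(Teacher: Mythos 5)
Your proposal is correct and follows essentially the same route as the paper: the crux in both is the identification $(M/M_{U})^{<t>}\simeq (M^{<t>})^{U^{opp}}$ via the annihilator of $Dist(U)^{+}M$ in $M^{*}$, combined with the fact that the exact anti-equivalence $(-)^{<t>}$ carries projective resolutions to injective resolutions. Your treatment merely spells out the Tor/Ext bookkeeping and the $G_{ev}$-equivariance (via an equivariant resolution) that the paper leaves implicit.
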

\begin{proof}
An element $f$ belongs to $(M^{<t>})^{U^{opp}}$ if and only if for every $\phi\in Dist(U^{opp})^+$ one has $\phi f=0$; meaning that
for every $m\in M$ one has $f(\phi^{<t>}m)=0$. The last condition is equivalent to $f\in (M/M_U)^{<t>}$. 
Since the functor $V\mapsto V^{<t>}$ maps projective resolutions to injective resolutions, both statements follow.  
\end{proof}
\begin{tr}\label{Weylfiltr}
A finite-dimensional $G$-supermodule $M$ has a Weyl filtration if and only if $G_{ev}$-supermodule (or $G_{res}$-module) $M/M_U$ has a Weyl filtration and
$H_1(U, M)=0$.
\end{tr}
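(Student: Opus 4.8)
The plan is to deduce Theorem~\ref{Weylfiltr} from Theorem~\ref{goodfiltr} by applying the self-duality $M\mapsto M^{<t>}$ on the category of finite-dimensional $G$-supermodules, exactly as one passes between the good and Weyl settings in the classical theory. First I would recall the two basic compatibilities already recorded in the excerpt: the contravariant functor $M\mapsto M^{<t>}$ sends $H^0(\lambda^a)$ to $V(\lambda^a)$ (and conversely), so $M$ has a Weyl filtration if and only if $M^{<t>}$ has a good filtration; and $M^{<t>}|_{U^{opp}}=(M|_U)^{<t>}$, so $M|_U$ is injective as a $U$-supermodule exactly when $M^{<t>}|_{U^{opp}}$ is injective as a $U^{opp}$-supermodule. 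Second, by Proposition~\ref{duality} there is an isomorphism of $G_{ev}$-supermodules $H_k(U,M)^{<t>}\simeq H^k(U^{opp},M^{<t>})$ for every $k\geq 0$; in particular $H_1(U,M)=0$ if and only if $H^1(U^{opp},M^{<t>})=0$, since $V\mapsto V^{<t>}$ is faithful on finite-dimensional superspaces.

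Next I would identify the fixed-point object. I claim $(M^{<t>})^{U^{opp}}\simeq (M/M_U)^{<t>}$ as $G_{ev}$-supermodules; this is precisely the $k=0$ case of Proposition~\ref{duality} together with the fact (Remark~\ref{asubmodule}) that $M_U=M_{Dist(U)}$ is the smallest subsupermodule on which $U$ acts trivially on the quotient, so that $H_0(U,M)=M/M_U$. Combining this with the classical statement that $N$ has a good filtration as a $G_{ev}$-supermodule iff $N^{<t>}$ has a Weyl filtration, we get that $M/M_U$ has a Weyl filtration as a $G_{ev}$-supermodule iff $(M^{<t>})^{U^{opp}}$ has a good filtration as a $G_{ev}$-supermodule. (Here one also uses the standard fact, available from the structure of $GL(W)_{res}$ as a highest weight category with the $t$-duality fixing simples, that good and Weyl filtrations are interchanged by $<t>$ on the even side.)

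With these three equivalences in hand the proof is a direct chain of biconditionals: $M$ has a Weyl filtration $\iff$ $M^{<t>}$ has a good filtration $\iff$ (by Theorem~\ref{goodfiltr} applied to the finite-dimensional $G$-supermodule $M^{<t>}$, noting it is automatically $\Gamma'$-restricted for a suitable finitely generated ideal $\Gamma'$) $(M^{<t>})^{U^{opp}}$ has a good filtration over $G_{ev}$ and $H^1(U^{opp},M^{<t>})=0$ $\iff$ $M/M_U$ has a Weyl filtration over $G_{ev}$ and $H_1(U,M)=0$. The main obstacle I expect is purely bookkeeping: making sure the $<t>$-duality on $G$-supermodules really does restrict to the $<t>$-duality on $U$- and $U^{opp}$-supermodules used in Proposition~\ref{duality} (this is the sentence ``$M^{<t>}|_U=(M|_{U^{opp}})^{<t>}$'' in the excerpt, which must be invoked carefully, since the same symbol $<t>$ is being used for three different functors), and confirming that the finiteness hypothesis in Theorem~\ref{goodfiltr} and in Proposition~\ref{duality} is preserved — both are fine here because $M$ is assumed finite-dimensional, hence so is $M^{<t>}$. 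No genuinely new computation is needed; everything reduces to the good-filtration criterion via duality.
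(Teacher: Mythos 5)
Your proposal is correct and follows exactly the paper's own route: the paper proves Theorem \ref{Weylfiltr} by applying Theorem \ref{goodfiltr} to $M^{<t>}$ and invoking Proposition \ref{duality}, which is precisely your chain of biconditionals (your extra care about $(M^{<t>})^{U^{opp}}\simeq (M/M_U)^{<t>}$ and the restriction compatibilities of $<t>$ just makes explicit what the paper leaves implicit).
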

\begin{proof}
Apply Theorem \ref{goodfiltr} to the $G$-supermodule $M^{<t>}$ and refer to Proposition \ref{duality}.
\end{proof}
\begin{rem}\label{proj}
By Lemmas \ref{injandproj} and \ref{injandproj2}, $M|_U$ is projective and isomorphic to $Dist(U)^{\oplus I}\bigoplus \Pi Dist(U)^{\oplus J}$. As in Remark \ref{asinjective}, if $M$ has a Weyl filtration
$$0\subseteq M_1\subseteq\ldots\subseteq M_s=M$$
such that $M_i/M_{i-1}\simeq V(\lambda_i^{a_i})$ for $a_i=0, 1$ and $1\leq i\leq s$, then $M/M_U$ has a Weyl filtration
$$0\subseteq M_1/(M_1)_U\subseteq\ldots\subseteq M_s/(M_s)_U=M/M_U$$
such that $M_i/(M_i)_U/M_{i-1}/(M_{i-1})_U\simeq (M_i/M_{i-1})/(M_i/M_{i-1})_U=V_{ev}(\lambda^{a_i}_i)$ for $1\leq i\leq s$. 
Additionally, $M\simeq\bigoplus_{1\leq i\leq s} M_i/M_{i-1}$ as an $U$-supermodule.
Therefore
$$I^{\sharp}=\sum_{\lambda, (M : V(\lambda^0))\neq 0}\dim V_{ev}(\lambda) \mbox{ and } J^{\sharp}=\sum_{\lambda, (M : V(\lambda^1))\neq 0}\dim V_{ev}(\lambda).$$ 
\end{rem}

Corresponding to a decreasing chain of finitely generated ideals 
$$\Gamma=\Gamma_0\supseteq\Gamma_1\supseteq\Gamma_2\supseteq\ldots$$
such that $\Gamma\setminus\Gamma_k$ is finite for every $k\geq 0$ and $\bigcap_{k\geq 0}\Gamma_k=\emptyset$, there is an increasing chain of finite-dimensional subsupermodules
$$0\subseteq O^{\Gamma_1}(M)\subseteq O^{\Gamma_2}(M)\subseteq\ldots .$$

By Theorem 4.11 of \cite{zubmar}, $M$ has an increasing Weyl filtration if and only if each $M_k=O^{\Gamma_k}(M)$ has an increasing Weyl filtration if and only if each $M_k^{<t>}$ has a good filtration. 
\begin{cor}\label{Weylfiltrcor}
$M$ has an increasing Weyl filtration if and only if for every $k\geq 1$ $H_1(U, M_k)=0$ and $M_k/(M_k)_U$ has a Weyl filtration as a $G_{ev}$-supermodule (or as a $G_{res}$-module).
\end{cor}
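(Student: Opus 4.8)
The plan is to combine the two results cited immediately before the statement, namely Theorem 4.11 of \cite{zubmar} and Theorem \ref{Weylfiltr}, reducing the infinite-filtration question to the finite-dimensional case handled by the latter. First I would invoke the decreasing chain of finitely generated ideals $\Gamma=\Gamma_0\supseteq\Gamma_1\supseteq\cdots$ with $\Gamma\setminus\Gamma_k$ finite and $\bigcap_k\Gamma_k=\emptyset$, and set $M_k=O^{\Gamma_k}(M)$. Since each $M_k$ is $\Gamma$-restricted with only finitely many composition factors (those whose highest weights lie in $\Gamma\setminus\Gamma_k$), each $M_k$ is finite-dimensional. By Theorem 4.11 of \cite{zubmar}, $M$ has an increasing Weyl filtration if and only if every $M_k$ has a (necessarily finite) Weyl filtration.

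Next I would apply Theorem \ref{Weylfiltr} to each finite-dimensional supermodule $M_k$ in place of $M$: $M_k$ has a Weyl filtration if and only if the $G_{ev}$-supermodule (equivalently, the $G_{res}$-module) $M_k/(M_k)_U$ has a Weyl filtration and $H_1(U,M_k)=0$. Stringing these equivalences together across all $k\geq 1$ yields exactly the asserted criterion. One small point worth remarking is that the chain of ideals can be chosen so that the $M_k$ exhaust $M$, and the conditions for $k$ and $k+1$ are compatible because $M_k\subseteq M_{k+1}$; but since Theorem 4.11 already packages the "for every $k$" quantification, no extra bookkeeping is required beyond citing it.

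The only genuinely delicate point is making sure each $M_k$ is finite-dimensional so that Theorem \ref{Weylfiltr}, stated for finite-dimensional supermodules, applies; this is immediate from the definition of $O^{\Gamma_k}(M)$ together with the fact that $M$ is $\Gamma$-restricted, since $M_k$ then has composition factors indexed only by the finite set $\Gamma\setminus\Gamma_k$ (and the restricted condition bounds the relevant blocks, so finitely many composition factors forces finite dimension). With that observed, the corollary follows formally, so I do not expect any serious obstacle — the work has all been done in Theorem \ref{Weylfiltr} and in \cite{zubmar}.
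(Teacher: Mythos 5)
Your proposal is correct and matches the paper's (implicit) argument: the corollary is obtained exactly by combining Theorem 4.11 of \cite{zubmar}, which reduces the existence of an increasing Weyl filtration of $M$ to each finite-dimensional $M_k=O^{\Gamma_k}(M)$ having a Weyl filtration, with Theorem \ref{Weylfiltr} applied to each $M_k$. The finite-dimensionality of the $M_k$ that you worry about is already asserted in the paper for restricted $M$, so no further argument is needed.
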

Analogously as in Remark \ref{proj}, we derive that $M\simeq \bigoplus_{k\geq 1} M_k/M_{k-1}$ is a  projective $U$-supermodule. Moreover, $M/M_U$ has an increasing  Weyl filtration consisting of submodules $M_k/(M_k)_U$.
\begin{question}\label{converse}
Assume that $M$ is projective as $U$-supermodule and $G_{ev}$-supermodule $M/M_U$ has an increasing Weyl filtration. Does it imply
that $M$ has an increasing Weyl filtration as a $G$-supermodule?
\end{question}
We have proved the following theorem.
\begin{tr}\label{criterion}
A restricted $G$-supermodule $M$ is tilting if and only if the following two conditions are satisfied.
\begin{enumerate}
\item The $G_{ev}$-supermodule $M^{U^{opp}}$ 
has a decreasing good filtration and $M|_{U^{opp}}$ is injective;
\item For any decreasing chain of finitely generated ideals 
$$\Gamma=\Gamma_0\supseteq\Gamma_1\supseteq\Gamma_2\supseteq\ldots$$
such that $\Gamma\setminus\Gamma_k$ is finite for every $k\geq 0$ and $\bigcap_{k\geq 0}\Gamma_k=\emptyset$, 
each $M_k=O^{\Gamma_k}(M)$ is projective as $U$-supermodule and $M_k/(M_k)_U$ has a Weyl filtration as a $G_{ev}$-supermodule.
\end{enumerate}
\end{tr}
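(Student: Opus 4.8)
The plan is to read the theorem as a direct synthesis of the two filtration criteria already obtained, Theorem \ref{goodfiltr} and Corollary \ref{Weylfiltrcor}, combined with the definition of a tilting object in a highest weight category (see \S4 of \cite{zubmar}) as an object that simultaneously admits a decreasing good filtration and an increasing Weyl filtration. Accordingly, I would show that condition (1) is exactly a restatement of ``$M$ has a decreasing good filtration'' and that condition (2) is exactly a restatement of ``$M$ has an increasing Weyl filtration'', and then invoke the definition.

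For condition (1): the subsupergroup $U^{opp}$ is infinitesimal and unipotent, so Lemma \ref{injandproj2} identifies the vanishing $H^1(U^{opp}, M)=0$ with injectivity of $M|_{U^{opp}}$. Substituting this equivalence into Theorem \ref{goodfiltr} shows that $M$ has a decreasing good filtration precisely when $M^{U^{opp}}$ has a decreasing good filtration over $G_{ev}$ and $M|_{U^{opp}}$ is injective, i.e.\ precisely when (1) holds. For condition (2): fix a decreasing chain $\Gamma=\Gamma_0\supseteq\Gamma_1\supseteq\cdots$ of the kind specified in the statement and put $M_k=O^{\Gamma_k}(M)$, a finite-dimensional subsupermodule. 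Since $U$ is infinitesimal and unipotent, Lemma \ref{injandproj2} turns the condition $H_1(U, M_k)=0$ into projectivity of $M_k$ as a $U$-supermodule. Plugging this into Corollary \ref{Weylfiltrcor} shows that $M$ has an increasing Weyl filtration precisely when, for every $k\geq 1$, $M_k$ is projective over $U$ and $M_k/(M_k)_U$ has a Weyl filtration over $G_{ev}$; and since the existence of an increasing Weyl filtration is intrinsic to $M$ and does not depend on the choice of chain, this coincides with condition (2). Combining the two reformulations yields the theorem.

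There is no genuine obstacle in this argument; it is essentially a bookkeeping exercise in matching hypotheses to earlier results. The only point deserving a moment's attention is the quantifier in (2): Corollary \ref{Weylfiltrcor} is phrased for one fixed admissible chain with ``for every $k\geq 1$'', whereas (2) reads ``for any decreasing chain'', and one must observe that the two are equivalent because, by Theorem 4.11 of \cite{zubmar}, $M$ having an increasing Weyl filtration is equivalent to each $M_k$ having one, independently of which admissible chain is chosen (and at least one such chain exists for $\Gamma$ finitely generated).
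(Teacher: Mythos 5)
Your proposal is correct and follows exactly the route the paper intends: the theorem is stated with the preface ``We have proved the following theorem,'' i.e.\ it is just the combination of Theorem \ref{goodfiltr} (with $H^1(U^{opp},M)=0$ translated into injectivity of $M|_{U^{opp}}$ via Lemma \ref{injandproj2}) and Corollary \ref{Weylfiltrcor} (with $H_1(U,M_k)=0$ translated into projectivity of $M_k|_U$), together with the definition of a tilting object as one having both a decreasing good and an increasing Weyl filtration. Your remark about the chain-independence of condition (2), via Theorem 4.11 of \cite{zubmar}, is the right way to reconcile the quantifiers and matches the paper's setup.
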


\section{Representations of $G_r T$ and $G_r B$}

Following \cite{jan}, we denote $ind^{G_r B}_B K^{a}_{\lambda}$ and $Dist(G_r B^{opp})\otimes_{Dist(B^{opp})} K^{a}_{\lambda}$
by $\Hat{Z}'_r (\lambda^a)$ and $\Hat{Z}_r (\lambda^a)$ respectively. Without a loss of generality, one can assume that $a=0$. 

By Lemma 14.2 of \cite{zub4} there are superscheme isomorphisms 
$$B\times V^{opp}_r\simeq BG_r=G_r B \mbox{ and } B^{opp}\times V_r\simeq B^{opp}G_r =G_r B^{opp}.$$ 
Thus $Dist(G_r B^{opp})$ is a free $Dist(B^{opp})$-supermodule (see Lemma 5.2 of \cite{kuj}). 
Combining this and Theorem 10.1 of \cite{zub4}, we obtain that $\Hat{Z}'_r (\lambda)|_{G_r}\simeq Z'_r (\lambda)$ and $\Hat{Z}_r (\lambda)|_{G_r}\simeq Z_r (\lambda)$. 
Analogously, there are natural superscheme isomorphims
$$B_r T\times V^{opp}_r\simeq G_r T \mbox{ and } B^{opp}_r T\times V_r\simeq G_r T$$ 
such that $\Hat{Z}'_r (\lambda)|_{G_r T}\simeq ind^{G_r T}_{B_r T} K_{\lambda}$ and $\Hat{Z}_r (\lambda)|_{G_r T}\simeq
Dist(G_r T)\otimes_{Dist(B^{opp}_r T)} K_{\lambda}$. 

As in the fifth section, we see that $\Hat{Z}'_r (\lambda)|_{B^{opp}_r T}\simeq K_{\lambda}\otimes K[V^{opp}_r]$ and
$\Hat{Z}'_r (\lambda)^{V^{opp}_r}=\Hat{Z}'_r (\lambda)_{\lambda}\simeq K_{\lambda}$. Thus the socle of $\Hat{Z}'_r (\lambda)$ is irreducible and generated by
a $B^{opp}_r T$-primitive vector of weight $\lambda$. Observe also that $\Hat{Z}'_r (\lambda)_{\mu}\neq 0$ implies $\mu\leq\lambda$. 

Analogously, $\Hat{Z}_r (\lambda)|_{B_r T}\simeq Dist(V_r)\otimes K_{\lambda}$. Since the $B_r T$-top of 
$\Hat{Z}_r (\lambda)$ is isomorphic to  
$\Hat{Z}_r (\lambda)/\Hat{Z}_r (\lambda)_{V_r}\simeq K_{\lambda}$, the $G_r B^{opp}$-top of $\Hat{Z}_r (\lambda)$ is also irreducible. 
Moreover, $\Hat{Z}_r (\lambda)$ is generated by a $B^{opp}$-primitive vector of weight $\lambda$ and
$\Hat{Z}_r (\lambda)_{\mu}\neq 0$ implies $\mu\leq\lambda$.

Denote by 
$\Hat{L}'_r(\lambda)$ and $\Hat{L}_r(\lambda)$ the modules $soc_{G_r B}\Hat{Z}'_r (\lambda)$ and 
$\Hat{Z}_r (\lambda)/rad_{G_r B^{opp}}\Hat{Z}_r (\lambda)$, respectively.
\begin{lm}\label{universalandcouniversal}
The supermodules $\Hat{Z}'_r (\lambda)$ (and $\Hat{Z}_r (\lambda)$, respectively) are couniversal (and universal, respectively) objects in the category
$G_r B -\mathsf{SMod}$ (and $G_r B^{opp} -\mathsf{SMod}$, respectively). In particular, each irreducible $G_r B$-supermodule
is isomorphic to exactly one 
$\Hat{L}'_r(\lambda^a)$ and each irreducible $G_r B^{opp}$-supermodule
is isomorphic to exactly one 
$\Hat{L}_r(\lambda^a)$. 
\end{lm}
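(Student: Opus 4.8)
The plan is to verify the two dual "couniversality/universality" assertions; by the self-duality $M\mapsto M^{<t>}$ interchanging $U$ and $U^{opp}$ (equivalently, by transposing $B\leftrightarrow B^{opp}$), it suffices to treat one of them, say that $\Hat{Z}'_r(\lambda)=ind^{G_rB}_B K_\lambda$ is couniversal in $G_rB-\mathsf{SMod}$. Concretely, I want to show: for every $G_rB$-supermodule $M$ and every $B$-equivariant even map $\phi\colon M|_B\to K_\lambda$ (a "weight-$\lambda$ functional factoring through the $B$-action"), there is a unique $G_rB$-morphism $\tilde\phi\colon M\to\Hat{Z}'_r(\lambda)$ lifting $\phi$, and dually that a $G_rB^{opp}$-supermodule generated by a $B^{opp}$-primitive vector of weight $\lambda$ is an epimorphic image of $\Hat{Z}_r(\lambda)$.

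First I would invoke Frobenius reciprocity for the (faithfully exact, since $G_rB/B\cong V^{opp}_r$ is a finite superscheme) induction functor $ind^{G_rB}_B$: this gives $Hom_{G_rB}(M,\Hat{Z}'_r(\lambda))\simeq Hom_B(M|_B,K_\lambda)$ naturally in $M$, which is precisely the couniversal property. For the universal statement one uses the analogous (co)Frobenius reciprocity $Hom_{G_rB^{opp}}(\Hat{Z}_r(\lambda),M)\simeq Hom_{B^{opp}}(K_\lambda,M|_{B^{opp}})$ for the functor $Dist(G_rB^{opp})\otimes_{Dist(B^{opp})}-$, valid because $Dist(G_rB^{opp})$ is $Dist(B^{opp})$-free (established in the paragraph above via Lemma 14.2 of \cite{zub4} and Lemma 5.2 of \cite{kuj}). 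Next, I would deduce the classification of irreducibles: any simple $G_rB$-supermodule $S$ has, after a parity shift, a nonzero $B$-equivariant map onto some $K_\lambda$ (take the $B$-socle of $S$, which is a sum of one-dimensionals $K_\mu$ since $B$ is solvable; pick one and note the inclusion $K_\lambda\hookrightarrow S|_B$ dualizes, or directly use that the $B$-top of $S$ is one-dimensional), hence a nonzero, thus injective, $G_rB$-map $S\hookrightarrow\Hat{Z}'_r(\lambda)$; since $soc_{G_rB}\Hat{Z}'_r(\lambda)=\Hat{L}'_r(\lambda)$ is simple (shown above via $\Hat{Z}'_r(\lambda)^{V^{opp}_r}\simeq K_\lambda$), we get $S\simeq\Hat{L}'_r(\lambda)$. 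Uniqueness of $\lambda$ follows because $\Hat{Z}'_r(\lambda)_\mu\neq0$ forces $\mu\leq\lambda$, so the highest weight of $\Hat{L}'_r(\lambda)$ determines $\lambda$; here one must account for the $p^rX(T)$-ambiguity only at the $G_r$-level, but at the $G_rB$-level the torus $T\subseteq B$ acts, pinning down $\lambda\in X(T)$ exactly.

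The main obstacle I anticipate is not the reciprocity isomorphisms themselves but the bookkeeping needed to pass cleanly from "$M$ is generated by a $B^{opp}$-primitive vector of weight $\lambda$" to "$M$ is a quotient of $\Hat{Z}_r(\lambda)$" in the \emph{super} setting: one must check that the canonical map $\Hat{Z}_r(\lambda)\to M$ coming from reciprocity is surjective, which uses that $M$ is generated \emph{as a $Dist(G_rB^{opp})$-supermodule} by the image of the primitive vector, together with the identification $\Hat{Z}_r(\lambda)|_{B_rT}\simeq Dist(V_r)\otimes K_\lambda$ to see that $\Hat{Z}_r(\lambda)$ is itself cyclic on its primitive vector. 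Parity shifts must be tracked throughout ($K^a_\lambda$ vs.\ $K_\lambda$), which is why the statement lists the irreducibles as $\Hat{L}'_r(\lambda^a)$ and $\Hat{L}_r(\lambda^a)$ with $a\in\{0,1\}$; I would reduce to $a=0$ at the outset, as the paper already does for $\Hat{Z}'_r$ and $\Hat{Z}_r$, and then tensor with $\Pi$ at the end. Everything else is a formal consequence of exactness of $ind^{G_rB}_B$, freeness of $Dist(G_rB^{opp})$ over $Dist(B^{opp})$, and the already-proven irreducibility of the relevant socle/top.
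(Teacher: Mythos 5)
Your argument for the ``in particular'' clause is sound and close in spirit to the paper's: a simple $G_rB$-supermodule $S$ has a one-dimensional $B$-quotient $\Pi^aK_\lambda$, Frobenius reciprocity turns this into a nonzero (hence injective, $S$ being simple) map $S\to \Hat{Z}'_r(\lambda^a)$, and simplicity of $soc_{G_rB}\Hat{Z}'_r(\lambda^a)$, established in the preceding paragraph, forces $S\simeq\Hat{L}'_r(\lambda^a)$; uniqueness of $\lambda$ via the highest weight and the action of $T$ is also as in the paper. The universal statement for $\Hat{Z}_r(\lambda)$ (generation by a $B^{opp}$-primitive vector gives a surjection from $\Hat{Z}_r(\lambda)$) is dismissed by the paper as obvious, and your tensor--hom argument is the standard justification.

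The gap is in what ``couniversal'' means. You take it to be the bare reciprocity isomorphism $Hom_{G_rB}(M,\Hat{Z}'_r(\lambda))\simeq Hom_B(M|_B,K_\lambda)$, but the paper's notion is dual to its earlier definition of a universal supermodule: every $G_rB$-supermodule $M$ that is \emph{cogenerated} by a $B^{opp}_rT$-primitive vector $v$ of weight $\lambda$, with all weights $\leq\lambda$, must \emph{embed} into $\Hat{Z}'_r(\lambda)$. Reciprocity only produces a nonzero map; injectivity is automatic when $M$ is simple (your case) but not in general, and the paper's proof supplies exactly the missing step: for any $0\neq m\in M^{V^{opp}_r}$ of weight $\mu$ one considers $M'=Dist(G_rB)m=Dist(V)m$, concludes $\mu=\lambda$ and $M^{V^{opp}_r}=M_\lambda=Kv$, hence $soc_{G_rB}M=\Hat{L}'_r(\lambda^{|v|})$ is simple, and then the unique-up-to-scalar nonzero map $M\to\Hat{Z}'_r(\lambda)$ (which is nonzero on $M_\lambda$, hence on the socle) has zero kernel. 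This stronger form is the content of the lemma as stated; your version happens to suffice for the later application in Lemma \ref{restrictionon} only because the Steinberg supermodule is already irreducible over $G_r$, so restating the lemma at your level of generality would weaken it. A minor further point: reducing one half of the lemma to the other via $M\mapsto M^{<t>}$ together with $\Hat{Z}'_r(\lambda)^{<t>}\simeq\Hat{Z}_r(\lambda)$ is mildly circular, since the paper derives that identification from this very lemma; this is harmless in your write-up only because you in fact argue both sides directly.
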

\begin{proof}
The statement related to $\Hat{Z}_r (\lambda)$ is obvious.
Let $M$ be a $G_r B$-supermodule that is cogenerated by a $B^{opp}_r T$-primitive vector $v$ of weight $\lambda$ and $M_{\mu}\neq 0$ implies $\mu\leq\lambda$. 
Consider a vector $m\in M^{V^{opp}_r}\setminus 0$ of weight $\mu$. Set $M'=Dist(G_r B) m=Dist(V)m$ (this is a $G_r B$-subsupermodule generated by $m$; see Lemma 9.4 of \cite{zub2}).  
Then $M'_{\mu}=Km$ and $M'_{\pi}\neq 0$ implies $\pi\leq\mu$. Since $v\in M'$, it follows that $\mu=\lambda$ and
$M^{V^{opp}_r}=M_{\lambda}=Kv$. In particular, $soc_{G_r B} M=\Hat{L}_r(\lambda^{|v|})$.

Furthermore, $M|_B$ has a factor that is isomorphic to 
$K_{\lambda}$ (cf. Remark 5.3 of \cite{zub1}), hence
$Hom_{G_r B}(M, \Hat{Z}'_r (\lambda))\simeq Hom_{B}(M, K_{\lambda})=K$ and  $M$ is embedded into $\Hat{Z}'_r (\lambda)$.
\end{proof}

We leave for the reader to verify that 
the socle of $\Hat{Z}'_r (\lambda)|_{G_r T}$ and the top of
$\Hat{Z}_r (\lambda)|_{G_r T}$ are irreducible $G_r T$-supermodules. Furthemore, $\Hat{Z}'_r (\lambda)|_{G_r T}$ and 
$\Hat{Z}_r (\lambda)|_{G_r T}$, respectively are couniversal and universal objects, respectively, in $G_r T-\mathsf{SMod}$. Therefore every irreducible $G_r T$-supermodule is isomorphic to the socle of exactly one $\Hat{Z}'_r (\lambda^a)|_{G_r T}$ 
and to the top of exactly one $\Hat{Z}_r (\lambda^a)|_{G_r T}$, respectively.

The anti-automorphism $\phi\mapsto\phi^{<t>}$ maps $Dist(G_r B)$ onto $Dist(G_r B^{opp})$. In particular, the functor $M\mapsto M^{<t>}$ induces a duality between the full subcategory of $G_r B -\mathsf{SMod}$, consisting of all finite-dimensional supermodules, and the same kind subcategory in $G_r B^{opp} -\mathsf{SMod}$. Since
$M^{<t>}|_T\simeq M|_T$ for every $T$-supermodule $M$ (cf. Lemma 5.4 of \cite{zub1}), this implies $\Hat{Z}'_r (\lambda^a)^{<t>}\simeq \Hat{Z}_r (\lambda^a)$ and
$\Hat{L}'_r (\lambda^a)^{<t>}\simeq \Hat{L}_r (\lambda^a)$. Moreover, $\phi\mapsto\phi^{<t>}$ induces an anti-automorphism of
$Dist(G_r T)$, hence it induces a self-duality of the full subcategory of $G_r T-\mathsf{SMod}$, consisting of all finite-dimensional supermodules,  such that $(\Hat{Z}'_r (\lambda^a)|_{G_r T})^{<t>}\simeq \Hat{Z}_r (\lambda^a)|_{G_r T}$. 

The following lemma generalizes Proposition II.9.6 from \cite{jan}.
\begin{lm}\label{simplesoverG_rT}
The socle of $\Hat{Z}'_r (\lambda)|_{G_r T}$ is isomorphic to the top of $\Hat{Z}_r (\lambda)|_{G_r T}$.
If $\Tilde{L}_r(\lambda)$ denotes this irreducible $G_r T$-supermodule, then $\Tilde{L}_r(\lambda)^{<t>}\simeq \Tilde{L}_r(\lambda)$.

Also, 
$$soc_{G_r}\Hat{Z}'_r (\lambda)=soc_{G_r T} \Hat{Z}'_r (\lambda)=soc_{G_r B} \Hat{Z}'_r (\lambda)$$ and
$$rad_{G_r} \Hat{Z}_r (\lambda)=rad_{G_r T} \Hat{Z}_r (\lambda)=rad_{G_r B^{opp}} \Hat{Z}_r (\lambda).$$
\end{lm}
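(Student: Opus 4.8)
The plan is to mimic the classical proof of Proposition II.9.6 of \cite{jan}, exploiting the duality $M\mapsto M^{<t>}$ to reduce the two displayed identities to a single one, and then to compare socles computed over the chain $G_r\subseteq G_r T\subseteq G_r B$.

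\textbf{Step 1: reduce to $\Hat Z'_r(\lambda)$.} First I would record that the self-duality $M\mapsto M^{<t>}$ of the category of finite-dimensional $G_rT$-supermodules fixes $T$-characters (since $M^{<t>}|_T\simeq M|_T$), carries $\Hat Z_r(\lambda)|_{G_rT}$ to $\Hat Z'_r(\lambda)|_{G_rT}$, and interchanges radicals over $G_r B^{opp}$ with socles over $G_r B$ (and likewise over $G_r T$ and $G_r$, since $\phi\mapsto\phi^{<t>}$ anti-maps $Dist(G_r)$, $Dist(G_r T)$, $Dist(G_r B)$ onto $Dist(G_r)$, $Dist(G_r T)$, $Dist(G_r B^{opp})$). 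Hence the assertion $rad_{G_r}\Hat Z_r(\lambda)=rad_{G_rT}\Hat Z_r(\lambda)=rad_{G_rB^{opp}}\Hat Z_r(\lambda)$ follows from the socle statement for $\Hat Z'_r(\lambda)$, and the claim $\Tilde L_r(\lambda)=soc_{G_rT}\Hat Z'_r(\lambda)\simeq (\Hat Z_r(\lambda)|_{G_rT})/rad$ together with $\Tilde L_r(\lambda)^{<t>}\simeq\Tilde L_r(\lambda)$ is just the $G_rT$-version of this duality.

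\textbf{Step 2: the socle chain for $\Hat Z'_r(\lambda)$.} Since $G_r\unlhd G_rT\unlhd G_rB$, there are inclusions
$$soc_{G_r B}\Hat Z'_r(\lambda)\subseteq soc_{G_r T}\Hat Z'_r(\lambda)\subseteq soc_{G_r}\Hat Z'_r(\lambda),$$
so it suffices to prove $soc_{G_r}\Hat Z'_r(\lambda)\subseteq soc_{G_rB}\Hat Z'_r(\lambda)$, i.e. that $G_r$-socle is already a $G_rB$-subsupermodule on which $B$ acts semisimply. The key input is the description in \S9 of the excerpt: $\Hat Z'_r(\lambda)|_{B^{opp}_rT}\simeq K_\lambda\otimes K[V^{opp}_r]$, hence $\Hat Z'_r(\lambda)^{V^{opp}_r}=\Hat Z'_r(\lambda)_\lambda\simeq K_\lambda$, and $\Hat Z'_r(\lambda)_\mu\neq 0$ forces $\mu\leq\lambda$; moreover $\Hat Z'_r(\lambda)|_{G_r}\simeq Z'_r(\lambda)$, whose socle is the irreducible $G_r$-supermodule $L_r(\lambda)$ with one-dimensional $\lambda$-weight space. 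I would then argue: any $G_r$-submodule isomorphic to a copy of $L_r(\lambda)$ (or $L_r(\mu)$ with $\mu\equiv\lambda\bmod p^rX(T)$) inside $\Hat Z'_r(\lambda)$ contains a nonzero $V^{opp}_r$-invariant, hence meets $\Hat Z'_r(\lambda)^{V^{opp}_r}=K_\lambda$; since $K_\lambda$ is one-dimensional this pins the $G_r$-socle down to a single copy, namely $L_r(\lambda)=L'_r(\lambda)$, which is exactly $\Hat L'_r(\lambda)|_{G_r}$ by the couniversality in Lemma \ref{universalandcouniversal}. So $soc_{G_r}\Hat Z'_r(\lambda)=\Hat L'_r(\lambda)|_{G_r}$ is visibly a $G_rB$-subsupermodule and is $G_rB$-irreducible, forcing equality throughout the chain; the same chain of inclusions then collapses at the level of $G_rT$ as well, giving $soc_{G_rT}\Hat Z'_r(\lambda)=\Hat L'_r(\lambda)|_{G_rT}=:\Tilde L_r(\lambda)$, which is irreducible over $G_rT$.

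\textbf{Step 3: self-duality of $\Tilde L_r(\lambda)$ and assembling the statement.} Once $soc_{G_rT}\Hat Z'_r(\lambda)=\Tilde L_r(\lambda)$ is known to be irreducible, applying $M\mapsto M^{<t>}$ and using $(\Hat Z'_r(\lambda)|_{G_rT})^{<t>}\simeq\Hat Z_r(\lambda)|_{G_rT}$ yields $\Hat Z_r(\lambda)|_{G_rT}/rad_{G_rT}\simeq\Tilde L_r(\lambda)^{<t>}$; comparing $T$-characters (the $\lambda$-weight space is one-dimensional on both sides and $\lambda$ is the highest weight occurring) shows $\Tilde L_r(\lambda)^{<t>}\simeq\Tilde L_r(\lambda)$, which simultaneously identifies $soc_{G_rT}\Hat Z'_r(\lambda)$ with the $G_rT$-top of $\Hat Z_r(\lambda)$. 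Dualizing the chain of equalities from Step 2 gives the radical identities for $\Hat Z_r(\lambda)$. The main obstacle is Step 2: one must be careful that, because $L_r(\lambda)\simeq L_r(\mu)$ whenever $\lambda-\mu\in p^rX(T)$, a priori several weights $\mu\neq\lambda$ could contribute $V^{opp}_r$-invariants to the $G_r$-socle; the weight constraint $\Hat Z'_r(\lambda)_\mu\neq 0\Rightarrow\mu\leq\lambda$ combined with one-dimensionality of $\Hat Z'_r(\lambda)^{V^{opp}_r}$ is exactly what rules this out, and one should spell out (via Lemma 9.4 of \cite{zub2}, as in the proof of Lemma \ref{universalandcouniversal}) that a cyclic $G_r$-submodule generated by such an invariant of weight $\mu$ has all its weights $\leq\mu$, so that the only possibility is $\mu=\lambda$.
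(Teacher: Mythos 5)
Your overall strategy is the paper's: pin the socles down via the one--dimensional space $\Hat{Z}'_r(\lambda)^{V^{opp}_r}=K_\lambda$ and transfer the radical identities for $\Hat{Z}_r(\lambda)$ through the duality $M\mapsto M^{<t>}$. However, the opening assertion of your Step 2 is false: $G_rT$ is \emph{not} normal in $G_rB$. Already for $GL(2)$, conjugating $t=\mathrm{diag}(a,1)$ by a lower unitriangular element $u$ with off-diagonal entry $x$ gives a commutator $utu^{-1}t^{-1}$ which is lower unitriangular with entry $x(1-a^{-1})$; this is not a point of $V_r$, and since $G_rT\cap V=V_r$, it does not lie in $G_rT$. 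So the inclusion $soc_{G_rB}\Hat{Z}'_r(\lambda)\subseteq soc_{G_rT}\Hat{Z}'_r(\lambda)$ cannot be quoted as a consequence of a chain of normal subsupergroups. Moreover, even for the genuine normalities $G_r\unlhd G_rT$ and $G_r\unlhd G_rB$, the Clifford-type fact you use silently --- that the $G_rT$- and $G_rB$-socles restrict to semisimple $G_r$-supermodules --- is not free in the super setting and is precisely the point the paper argues: since $Dist(G_rB)=Dist(B_{ev})Dist(G_r)$ (all odd distributions already lie in $Dist(G_r)$), the $G_rB$-socle is completely reducible over $G_r$, hence contained in $soc_{G_r}\Hat{Z}'_r(\lambda)$, and similarly for $G_rT$.

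Both defects are repairable inside your own argument, and the repair turns it into the paper's proof. You only need: (i) $soc_{G_r}\Hat{Z}'_r(\lambda)$ is $G_r$-irreducible (your invariants argument; the paper identifies it, and likewise $soc_{G_rT}\Hat{Z}'_r(\lambda)$, with $Dist(V_r)v$, where $v$ spans $\Hat{Z}'_r(\lambda)_\lambda$), and (ii) the two complete-reducibility statements above, which give that $soc_{G_rT}\Hat{Z}'_r(\lambda)$ and $soc_{G_rB}\Hat{Z}'_r(\lambda)$ are nonzero $G_r$-submodules of the irreducible $G_r$-socle; equality of all three follows, and the inclusion you wanted from the bogus normality comes out a posteriori. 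Your Steps 1 and 3 are fine and differ from the paper only in order of deduction: the paper first proves ``socle of $\Hat{Z}'_r(\lambda)|_{G_rT}\simeq$ top of $\Hat{Z}_r(\lambda)|_{G_rT}$'' by Frobenius reciprocity as in the fifth section and then reads off $\Tilde{L}_r(\lambda)^{<t>}\simeq\Tilde{L}_r(\lambda)$ from $(\Hat{Z}'_r(\lambda)|_{G_rT})^{<t>}\simeq\Hat{Z}_r(\lambda)|_{G_rT}$, whereas you deduce both at once from the classification of irreducible $G_rT$-supermodules by highest weight, which is legitimate given the couniversality statements preceding the lemma.
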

\begin{proof}
Since $(soc_{G_r T} \Hat{Z}'_r (\lambda))^{V_r^{opp}}\simeq K_{\lambda}$, the first statement follows by the same arguments
as in the fifth section. 

Let $v$ denotes a generator of $\Hat{Z}'_r (\lambda)_{\lambda}$.
Then $$soc_{G_r T} \Hat{Z}'_r (\lambda)=
Dist(V_r) v=soc_{G_r T}\Hat{Z}'_r (\lambda)\subseteq Dist(V)v=soc_{B_r T}\Hat{Z}'_r (\lambda).$$ 
On the other hand, $Dist(G_r B)=Dist(B_{ev})Dist(G_r)$. Thus
$soc_{G_r B} \Hat{Z}'_r (\lambda)$ is completely reducible as $G_r$-supermodule, and therefore it is contained in $soc_{G_r T}\Hat{Z}'_r (\lambda)$.
Application of the duality $M\mapsto M^{<t>}$ implies the last statement.
\end{proof}

\section{Steinberg supermodules}

The positive even roots of $G$ are $\epsilon_i-\epsilon_j$, where
$1\leq i < j\leq m$ or $m+1\leq i < j\leq m+n$ and the positive odd roots of $G$ are $\epsilon_i-\epsilon_j$ for $1\leq i\leq m < j\leq m+n$. 
Let $\rho_0$ denote the half of the sum of all positive even roots of $G$, 
$\rho_1$ denote the half of the sum of all positive odd roots of $G$, $\rho=\rho_0-\rho_1$, and $\rho_{s, t}$ denotes a weight $s\sum_{1\leq i\leq m}\epsilon_i+
t\sum_{m+1\leq i\leq m+n}\epsilon_i$ for $s, t\in\mathbb{Z}$. 
Then $\rho_1=\frac{1}{2}\rho_{n, -m}$. 
\begin{lm}\label{a translation}
For every dominant weight $\lambda$ and for every positive integer $r$ there is an isomorphism
$$H^0((p^r-1)\rho_0 +\rho_{s, t})\otimes H^0_{ev}(\lambda)^{[r]}\simeq H^0((p^r-1)\rho_0 +\rho_{s, t}+p^r\lambda).$$
\end{lm}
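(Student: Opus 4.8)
The plan is to reduce this to a tensor identity inside $G_{ev}$, where the analogous statement is classical (see II.3.19 and the Steinberg-type arguments in \cite{jan}), and then transport it up to $G$ via the parabolic induction functor that realizes $H^0(\mu)$ as $ind^G_P$ of an $U^{opp}$-injective. First I would note that the left-hand-side module $N:=H^0((p^r-1)\rho_0+\rho_{s,t})\otimes H^0_{ev}(\lambda)^{[r]}$ carries a natural $G$-supermodule structure: the first factor is a genuine $G$-supermodule and the second is an even Frobenius twist, hence a $G$-supermodule via $G\to G_{ev}^{(r)}$. I would first analyze $N$ as a $U^{opp}$-supermodule: since $H^0_{ev}(\lambda)^{[r]}$ is inflated along $G\to G_{ev}^{(r)}$ it is $U^{opp}$-trivial (indeed $U^{opp}\subseteq G_r$ acts trivially on any $r$-th Frobenius twist, and more precisely $U^{opp}$ is infinitesimal so it lies in the kernel of $G\to G_{ev}^{(r)}$), so $N|_{U^{opp}}\simeq (H^0((p^r-1)\rho_0+\rho_{s,t})|_{U^{opp}})^{\oplus \dim H^0_{ev}(\lambda)}$. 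By Lemma 5.2 of \cite{zub1} (quoted in the excerpt) $H^0(\mu)\simeq ind^G_P H^0_{ev}(\mu)$ is an injective $U^{opp}$-supermodule with $H^0(\mu)^{U^{opp}}\simeq H^0_{ev}(\mu)$; hence $N|_{U^{opp}}$ is injective and $N^{U^{opp}}\simeq H^0_{ev}((p^r-1)\rho_0+\rho_{s,t})\otimes H^0_{ev}(\lambda)^{[r]}$ as $G_{ev}$-supermodules.

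Next I would invoke the classical even statement. Over $G_{ev}$ (equivalently $G_{res}$), the weight $(p^r-1)\rho_0+\rho_{s,t}$ restricted to the even part is $(p^r-1)\rho_0$ up to the central/determinant twist $\rho_{s,t}$, and the classical tensor identity II.3.19 of \cite{jan}, together with the fact that $(p^r-1)\rho_0$ is $p^r$-restricted so that $H^0_{ev}((p^r-1)\rho_0)=St_r$ is the Steinberg module and hence $St_r\otimes H^0_{ev}(\lambda)^{[r]}\simeq H^0_{ev}((p^r-1)\rho_0+p^r\lambda)$ (this is exactly the even analogue, valid for each $GL(m)\times GL(n)$ factor of $G_{ev}$), gives
$$H^0_{ev}((p^r-1)\rho_0+\rho_{s,t})\otimes H^0_{ev}(\lambda)^{[r]}\simeq H^0_{ev}((p^r-1)\rho_0+\rho_{s,t}+p^r\lambda).$$
Here one uses that tensoring by the one-dimensional character $\rho_{s,t}$ commutes with everything and just shifts the highest weight. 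So $N^{U^{opp}}\simeq H^0_{ev}((p^r-1)\rho_0+\rho_{s,t}+p^r\lambda)$.

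Finally I would reconstruct $N$ from $N^{U^{opp}}$ using parabolic induction. Since $N|_{U^{opp}}$ is injective and $P^{opp}$ is faithfully exact in $G$ with $G/P^{opp}$ affine, the functor $ind^G_{P^{opp}}$ applied to the $P^{opp}$-supermodule $N^{U^{opp}}$ (viewed as $G_{ev}$-module, inflated to $P^{opp}$) recovers $N$; concretely, by the adjunction and the vanishing of higher $U^{opp}$-cohomology one gets $N\simeq ind^G_{P^{opp}} (N^{U^{opp}})$. Alternatively, and more cleanly, I would compare both sides of the desired isomorphism directly: the right-hand side is $H^0((p^r-1)\rho_0+\rho_{s,t}+p^r\lambda)\simeq ind^G_P H^0_{ev}((p^r-1)\rho_0+\rho_{s,t}+p^r\lambda)$ by Lemma 5.2 of \cite{zub1}; both $N$ and this module are injective as $U^{opp}$-supermodules, have isomorphic $U^{opp}$-fixed points as $G_{ev}$-supermodules by the previous paragraph, and each is determined up to $G$-isomorphism by its $U^{opp}$-fixed points (since $ind^G_P$ is an equivalence onto its image with inverse $(-)^{U^{opp}}$ on $U^{opp}$-injectives, by the displayed remarks following Lemma 5.2 in the excerpt). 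Therefore they are isomorphic. I expect the main obstacle to be the bookkeeping in the second step: verifying that the even tensor identity really does hold for $G_{ev}=GL(m)_{res}\times GL(n)_{res}$ with the weight $(p^r-1)\rho_0$ (which is the Steinberg weight for the product, one must split $\rho_0$ into the two block contributions and handle the central characters $\rho_{s,t}$ carefully), and making sure the Frobenius twist conventions for $G_{ev}$ versus $G_{ev}^{(r)}$ match the even-Frobenius-twist convention fixed in §7 so that $H^0_{ev}(\lambda)^{[r]}$ on the left matches the inflation used on the right.
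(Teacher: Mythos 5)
Your analysis of the left-hand side over $U^{opp}$ and your even computation (splitting off the one-dimensional character $L_{ev}(\rho_{s,t})$ and applying Proposition II.3.19 of \cite{jan} to get $H^0_{ev}((p^r-1)\rho_0+\rho_{s,t})\otimes H^0_{ev}(\lambda)^{[r]}\simeq H^0_{ev}((p^r-1)\rho_0+\rho_{s,t}+p^r\lambda)$) agree with the paper. The gap is in your final reconstruction step. You claim that a $G$-supermodule that is injective over $U^{opp}$ is recovered from, hence determined up to isomorphism by, its $U^{opp}$-fixed points via parabolic induction. This is false in general: there is no natural $P$-homomorphism $N|_{P}\to N^{U^{opp}}$ to feed into Frobenius reciprocity, and $U^{opp}$-injectivity does not place $N$ in the essential image of $ind^G_P$. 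A concrete counterexample is the indecomposable injective $I(\lambda)$ in an atypical block of $GL(1|1)$: it is injective over $U^{opp}$ (restriction along the exact subsupergroup $U^{opp}$ preserves injectives), it has a good filtration with two factors $H^0(\lambda), H^0(\mu)$, and since $G_{ev}$ is a torus its fixed-point module $I(\lambda)^{U^{opp}}\simeq K_\lambda\oplus K_\mu$ coincides, as a $G_{ev}$-module, with $\bigl(H^0(\lambda)\oplus H^0(\mu)\bigr)^{U^{opp}}$; yet $I(\lambda)\not\simeq H^0(\lambda)\oplus H^0(\mu)$ and $ind^G_P\bigl(I(\lambda)^{U^{opp}}\bigr)\simeq H^0(\lambda)\oplus H^0(\mu)$. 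So "both sides are $U^{opp}$-injective with isomorphic fixed points" does not yield the isomorphism. (There is also a slip of notation: the fixed points under $U^{opp}$ pair with induction from $P$, not from $P^{opp}$.)

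The missing ingredient is the tensor identity at the level of $G$ itself, which is exactly what the paper uses: since $H^0_{ev}(\lambda)^{[r]}$ is a $G$-supermodule and $H^0(\pi)\simeq ind^G_P H^0_{ev}(\pi)$, the tensor identity gives
$$H^0(\pi)\otimes H^0_{ev}(\lambda)^{[r]}\simeq ind^G_P\bigl(H^0_{ev}(\pi)\otimes H^0_{ev}(\lambda)^{[r]}\bigr),$$
so the left-hand side is \emph{known} to be induced from $P$, and your even computation then identifies the inducing module with $H^0_{ev}(\pi+p^r\lambda)$, finishing the proof. Alternatively, your step 3 could be repaired without the tensor identity by invoking Theorem \ref{goodfiltr} and Remark \ref{asinjective}: since $N|_{U^{opp}}$ is injective and $N^{U^{opp}}\simeq H^0_{ev}(\pi+p^r\lambda)$ has a good filtration, $N$ has a good filtration, and the fixed-point bookkeeping of Remark \ref{asinjective} forces that filtration to have a single factor $H^0((\pi+p^r\lambda)^a)$; but this argument is not the one you gave, and as written your proof does not go through.
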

\begin{proof}
The module $H^0_{ev}(\lambda)^{[r]}$, considered as a $G$-supermodule, is an even Frobenius twist of $H^0_{ev}(\lambda)$.
The tensor identity implies
$$H^0((p^r-1)\rho_0 +\rho_{s, t})\otimes H^0_{ev}(\lambda)^{[r]}\simeq ind^G_{P} (H^0_{ev}((p^r-1)\rho_0 +\rho_{s, t})\otimes H^0_{ev}(\lambda)^{[r]}).$$
The simple $G_{ev}$-module $L_{ev}(\rho_{s, t})$ is one-dimensional. In fact, it is isomorphic to $\det(C_{00})^s\otimes
\det(C_{11})^t$. Using Proposition II.3.19 of \cite{jan} and the tensor identity we obtain
$$H^0_{ev}((p^r-1)\rho_0 +\rho_{s, t})\otimes H^0_{ev}(\lambda)^{[r]}\simeq H^0_{ev}((p^r-1)\rho_0)\otimes H^0_{ev}(\lambda)^{[r]}\otimes L_{ev}(\rho_{s, t})\simeq
$$ 
$$
H^0_{ev}((p^r-1)\rho_0 +p^r\lambda)\otimes L_{ev}(\rho_{s, t})\simeq H^0_{ev}((p^r-1)\rho_0+p^r\lambda +\rho_{s, t}).
$$
\end{proof}
\begin{rem}\label{forWeyl}
Application of the functor $M\to M^{<t>}$ gives
$$V((p^r-1)\rho_0 +\rho_{s, t})\otimes V_{ev}(\lambda)^{[r]}\simeq V((p^r-1)\rho_0 +\rho_{s, t}+p^r\lambda).$$
\end{rem}
\begin{lm}\label{irreducible}
The supermodule $H^0((p^r-1)\rho_0+\rho_{s, t})$ is irreducible if and only if 
$p\not| (\frac{(p^r +1)}{2}m+\frac{(p^r -1)}{2}n+s+t)$.
\end{lm}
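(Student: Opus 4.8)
The plan is to reduce the question of irreducibility of the induced supermodule $H^0((p^r-1)\rho_0+\rho_{s,t})$ to a criterion for irreducibility of induced supermodules $H^0(\lambda)$ over $GL(m|n)$, which by the Introduction is exactly the characterization obtained by Marko in \cite{marko}. That characterization says (in the present notation) that $H^0(\lambda)$ is irreducible precisely when a certain numerical condition on $\lambda$ is met; for a weight of the special shape $\mu=(p^r-1)\rho_0+\rho_{s,t}$, whose ``even'' pieces on the two blocks are as separated as possible — indeed $\mu$ restricted to each of the two $GL$-blocks is a scalar translate of the Steinberg-type weight $(p^r-1)\rho_0$ — the general Marko condition collapses to a single divisibility statement. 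So the first step is to write down $\mu=(p^r-1)\rho_0+\rho_{s,t}$ explicitly as an $(m+n)$-tuple: on the first $m$ coordinates it is $(p^r-1)(\tfrac{m-1}{2},\tfrac{m-3}{2},\dots)+s(1,\dots,1)$ and on the last $n$ coordinates it is $(p^r-1)(\tfrac{n-1}{2},\dots)+t(1,\dots,1)$, and then to feed this into Marko's criterion.

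Concretely I expect Marko's irreducibility criterion to involve, for each odd root $\epsilon_i-\epsilon_j$ with $1\le i\le m<j\le m+n$, a quantity of the form $\langle\mu+\rho,\epsilon_i-\epsilon_j\rangle$ or $\mu_i-\mu_j$ shifted by the appropriate $\rho$-correction, with irreducibility equivalent to $p$ not dividing a product or a sum built from these. The key computational step is therefore: evaluate $(\mu+\rho)_i-(\mu+\rho)_j$ (equivalently the relevant hook-length-type quantity) for $i$ in the even block and $j$ in the odd block, and observe that because $\mu$ is a $(p^r-1)$-multiple of $\rho_0$ plus a scalar on each block, all of these numbers are congruent modulo $p$ to one single value, namely $\tfrac{p^r+1}{2}m+\tfrac{p^r-1}{2}n+s+t$ up to sign. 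Here one uses that $(p^r-1)\rho_0$ contributes, through the sum over a full block of consecutive integers, a term whose reduction mod $p$ telescopes nicely, and that the $\rho_1$-correction contributes the ``$\tfrac{m+n}{2}$''-type shift that, combined with $\rho_0$, produces the asymmetric coefficients $\tfrac{p^r+1}{2}$ and $\tfrac{p^r-1}{2}$. Summing $s$ ones on the even block and $t$ ones on the odd block accounts for the $+s+t$.

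The main obstacle is bookkeeping: getting the $\rho$-shift conventions exactly right so that the combination of $\rho_0$ and $\rho_1=\tfrac12\rho_{n,-m}$ really does produce the precise coefficients $\tfrac{p^r+1}{2}m+\tfrac{p^r-1}{2}n$ rather than something off by a multiple of $p$ or by a sign, and confirming that every odd root gives the \emph{same} residue (so that a single divisibility condition suffices rather than a family of them). I would check this by computing $(\mu+\rho)_1-(\mu+\rho)_{m+1}$ and $(\mu+\rho)_m-(\mu+\rho)_{m+n}$ and verifying their difference is divisible by $p$ — this difference is $(\mu_1-\mu_m)+(\mu_{m+n}-\mu_{m+1})=(p^r-1)(m-1)+(p^r-1)(n-1)=(p^r-1)(m+n-2)$, which is $\equiv -(m+n-2)\equiv 0$ only mod... so in fact the correct statement is that the relevant quantities run over an arithmetic progression with common difference $p^r-1$ (hence all congruent mod $p$ to the reduction of $p^r-1$ times an integer, i.e. to $-1$ times that integer), and one then checks $p$ divides the product of all of them iff $p$ divides the single displayed expression. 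Once the residue is pinned down, the lemma follows immediately by invoking \cite{marko}.
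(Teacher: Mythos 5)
Your overall route is the same as the paper's: invoke Marko's criterion (irreducibility of $H^0(\mu)$ is equivalent to $p\nmid(\mu+\rho,\alpha)$ for every positive odd root $\alpha$) for $\mu=(p^r-1)\rho_0+\rho_{s,t}$, and show that all these pairings have one and the same residue mod $p$, namely the displayed one. The problem is in your final ``checking'' paragraph, where the bookkeeping you were worried about actually goes wrong and you then abandon the correct claim of your second paragraph. Two slips: first, for an odd root the supersymmetric pairing is a \emph{sum}, $(\mu+\rho,\epsilon_i-\epsilon_j)=(\mu+\rho)_i+(\mu+\rho)_j$ for $1\le i\le m<j\le m+n$ (this is also what makes the constant come out as $+s+t$ rather than $s-t$); second, and decisively, when you compare the pairings for two different odd roots you drop the $\rho$-contribution. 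For instance $(\mu+\rho)_1-(\mu+\rho)_m=(p^r-1)(m-1)+(m-1)=p^r(m-1)$, not $(p^r-1)(m-1)$: the $\rho_0$-part of $\rho$ supplies the extra $(m-1)$, while the $\rho_1$-part is constant on each block and cancels. With the $\rho$-shift kept, one finds $(\mu+\rho,\epsilon_i-\epsilon_j)=\frac{p^r+1}{2}m+\frac{p^r-1}{2}n+s+t+p^r\bigl(1-i-(j-m)\bigr)$, so the dependence on the odd root sits entirely inside a multiple of $p^r$; all $mn$ pairings are congruent mod $p$ to the displayed expression, and the lemma follows at once. This is exactly the one-line computation in the paper's proof.

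Your attempted repair --- that the pairings ``run over an arithmetic progression with common difference $p^r-1$'' and that one then checks ``$p$ divides the product of all of them iff $p$ divides the single displayed expression'' --- is not a valid argument and, if the premise were true, would contradict the lemma rather than prove it: a progression with common difference congruent to $-1$ mod $p$ takes distinct residues, so non-divisibility of the product is not equivalent to non-divisibility of any one designated term, and for $mn\ge p$ some pairing would always be divisible by $p$, making the criterion of the lemma vacuously unattainable. So as written the plan ends with a false step; the fix is simply to redo the comparison with the $\rho$-shift included and with the super pairing, after which the assertion in your second paragraph is correct and Marko's criterion finishes the proof.
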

\begin{proof}
Combining Theorem 1 of \cite{marko} and Remark II.3.19 from \cite{jan} we see that $H^0((p^r-1)\rho_0)+\rho_{s, t})$ is irreducible if and only if 
$p\not| ((p^r-1)\rho_0)+\rho_{s, t}+\rho, \alpha)$ for every positive odd root $\alpha$.
If $\alpha=\epsilon_i-\epsilon_j$ and $1\leq i\leq m < j\leq m+n$, then 
$$((p^r-1)\rho_0)+\rho_{s, t}+\rho, \alpha)=
\frac{(p^r -1)}{2}(m+n)+s + t+ m +p^r(m-i-j-1)
$$
and the lemma follows.
\end{proof}
An irreducible $G$-supermodule from Lemma \ref{irreducible} is called an $r$-th {\it Steinberg supermodule}.
It is obvious that an $r$-th Steinberg supermodule remains irreducible as a $G_r$-supermodule.

Let $T(\lambda)$ denotes the indecomposable tilting $G$-supermodule of the highest weight $\lambda$ (cf. \cite{zubmar}). Let $T_{ev}(\lambda)$ denotes the indecomposable 
tilting $G_{ev}$-supermodule (or $G_{res}$-module) of the highest weight $\lambda$. 
The following proposition generalizes Proposition (2.1) from \cite{don2} to general linear supergroups.
\begin{pr}\label{asdonkin}
If $H^0((p^r-1)\rho_0+\rho_{s, t})$ is a Steinberg supermodule, then 
$$H^0((p^r-1)\rho_0+\rho_{s, t})\otimes T_{ev}(\lambda)^{[r]}\simeq T((p^r-1)\rho_0+\rho_{s, t}+p^r\lambda).$$
\end{pr}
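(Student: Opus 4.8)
The plan is to show that $M:=H^0((p^r-1)\rho_0+\rho_{s,t})\otimes T_{ev}(\lambda)^{[r]}$ is an indecomposable tilting $G$-supermodule whose unique maximal weight is $\mu:=(p^r-1)\rho_0+\rho_{s,t}+p^r\lambda$, and then to conclude $M\simeq T(\mu)$, since an indecomposable tilting supermodule is determined by its highest weight (cf. \cite{zubmar}). Write $St_r:=H^0((p^r-1)\rho_0+\rho_{s,t})$; by hypothesis and Lemma~\ref{irreducible} it is irreducible, so $St_r=V((p^r-1)\rho_0+\rho_{s,t})$ as well, and $St_r=ind^G_P H^0_{ev}((p^r-1)\rho_0+\rho_{s,t})$ by Lemma~5.2 of \cite{zub1}. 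I would record first that $U$ and $U^{opp}$ act trivially on $T_{ev}(\lambda)^{[r]}$: the even Frobenius twist is inflated along $G\to G^{(r)}_{ev}\simeq G/G_r$, and on $U^{opp}$ (and on $U$) the corresponding Frobenius epimorphism sends every same-block, hence even, coordinate $\overline{c_{ij}}$ to $\delta_{ij}$, so it factors through the counit; thus $U,U^{opp}\subseteq G_r$.

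To see that $M$ has a good filtration I would apply Theorem~\ref{goodfiltr}. Since $U^{opp}$ is trivial on the twist, $M^{U^{opp}}\simeq St_r^{U^{opp}}\otimes T_{ev}(\lambda)^{[r]}=H^0_{ev}((p^r-1)\rho_0+\rho_{s,t})\otimes T_{ev}(\lambda)^{[r]}$, using $H^0(\nu)^{U^{opp}}\simeq H^0_{ev}(\nu)$. As in the proof of Lemma~\ref{a translation}, $H^0_{ev}((p^r-1)\rho_0+\rho_{s,t})$ is a Steinberg module for $G_{ev}$ tensored with the one-dimensional module $L_{ev}(\rho_{s,t})$, so Proposition~(2.1) of \cite{don2} (applied on each $GL$-factor, together with the tensor identity) yields $M^{U^{opp}}\simeq T_{ev}(\mu)$, which in particular has a good filtration as a $G_{res}$-module. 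Moreover $M|_{U^{opp}}\simeq(St_r|_{U^{opp}})^{\oplus\dim T_{ev}(\lambda)}$ (the twist being trivial and purely even over $U^{opp}$) is injective, because $St_r=ind^G_P(\cdots)$ is an injective $U^{opp}$-supermodule; hence $H^1(U^{opp},M)=0$ by Lemma~\ref{injandproj2}. Thus $M$, being finite-dimensional, has a good filtration. For a Weyl filtration I would use $M^{<t>}\simeq M$: here $St_r^{<t>}=V((p^r-1)\rho_0+\rho_{s,t})=St_r$, while $(T_{ev}(\lambda)^{[r]})^{<t>}\simeq(T_{ev}(\lambda)^{<t>})^{[r]}\simeq T_{ev}(\lambda)^{[r]}$ because the transpose duality for $G_{ev}$ fixes the tilting module $T_{ev}(\lambda)$ and commutes with the Frobenius twist, and $M\mapsto M^{<t>}$ is monoidal. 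Then $M^{<t>}$ has a good filtration, so $M$ has a Weyl filtration by Theorem~4.11 of \cite{zubmar}; being finite-dimensional with both filtrations, $M$ is tilting. (Alternatively one runs Theorem~\ref{Weylfiltr} directly: $M/M_U\simeq V_{ev}((p^r-1)\rho_0+\rho_{s,t})\otimes T_{ev}(\lambda)^{[r]}\simeq T_{ev}(\mu)$ and $H_1(U,M)=0$, the latter because $St_r$ is projective over $G_r$---being injective there---and $U$ is faithfully exact in $G_r$, so $St_r|_U$, hence $M|_U$, is $U$-projective.)

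To prove $M$ indecomposable I would compute $End_G(M)$. Put $N:=T_{ev}(\lambda)^{[r]}$, which is $G_r$-trivial. The adjunction gives $End_G(St_r\otimes N)\simeq Hom_G(N,St_r^*\otimes St_r\otimes N)$; since every $G$-homomorphism out of the $G_r$-trivial supermodule $N$ factors through $(St_r^*\otimes St_r\otimes N)^{G_r}=(St_r^*\otimes St_r)^{G_r}\otimes N$, and $(St_r^*\otimes St_r)^{G_r}=End_{G_r}(St_r)=K$ as $St_r$ is irreducible over $G_r$, this equals $Hom_G(N,N)=End_{G_{res}}(T_{ev}(\lambda))$ by Lemma~\ref{endomorphismring}. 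Since $T_{ev}(\lambda)$ is indecomposable, $End_{G_{res}}(T_{ev}(\lambda))$ is local, hence so is $End_G(M)$ and $M$ is indecomposable. Finally, all weights of $M$ are $\leq\mu$, and $\mu$ occurs in $M$ with multiplicity one---being the sum of the multiplicity-one top weights of $St_r$ and of $N$---while $\mu\in X(T)^+$; therefore the indecomposable tilting supermodule $M$ has highest weight $\mu$, i.e. $M\simeq T(\mu)$, as claimed.

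I expect the endomorphism-ring computation to be the crux of the argument: it requires carrying the even Frobenius twist correctly through the adjunctions and uses the input that the Steinberg supermodule is irreducible over $G_r$ (with scalar endomorphism ring). By contrast, the injectivity of $M|_{U^{opp}}$ used in the second paragraph, and the identity $(T_{ev}(\lambda)^{[r]})^{<t>}\simeq T_{ev}(\lambda)^{[r]}$ used there as well, are routine.
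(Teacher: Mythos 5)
Your argument is correct in substance and reaches the same endgame as the paper --- indecomposability via a computation of $End_G(M)$ in the style of Donkin \cite{don1}, followed by identification of $M$ through its unique highest weight $\mu$ of multiplicity one --- but you obtain the tilting property by a different route. The paper never invokes Theorem \ref{goodfiltr} nor the $U^{opp}$-injectivity of $H^0(\pi)$ here: it simply applies its super tensor identities, Lemma \ref{a translation} and Remark \ref{forWeyl}, along a good (respectively Weyl) filtration of $T_{ev}(\lambda)$, using $H^0(\pi)=L(\pi)=V(\pi)$, so that $M$ is filtered directly by the supermodules $H^0(\pi+p^r\nu)$ (respectively $V(\pi+p^r\nu)$). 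Your route through the cohomological criterion, the classical isomorphism $H^0_{ev}(\pi)\otimes T_{ev}(\lambda)^{[r]}\simeq T_{ev}(\mu)$ over $G_{res}$, and the self-duality $M^{<t>}\simeq M$ is heavier --- it relies on the routine but unverified facts that $<t>$ is monoidal, commutes with the even Frobenius twist and fixes $T_{ev}(\lambda)$ --- whereas the paper's tensor-identity argument yields both filtrations in one line; on the other hand, your version pushes all the filtration work down to $G_{ev}$ and the classical Donkin result, so it would stand even without Lemma \ref{a translation}. Your endomorphism computation is essentially the paper's (it writes $End_{G_r}(M)\simeq End_{G_r}(L_r(\pi))\otimes End_K(T_{ev}(\lambda)^{[r]})$ and then passes to invariants; your adjunction bookkeeping is equivalent). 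One step you state too quickly: in the super setting, irreducibility of the Steinberg supermodule over $G_r$ does not by itself give $End_{G_r}(St_r)=K$, since super Schur's lemma also allows a queer-type division superalgebra with a nontrivial odd endomorphism. The paper closes exactly this point by noting that $L_r(\pi)^{V_r^{opp}}=Z'_r(\pi)^{V_r^{opp}}$ is one-dimensional of a fixed parity, which forces every $G_r$-endomorphism of $St_r$ to be an even scalar; insert that one line (or an equivalent type-$M$ argument) and your proof is complete.
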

\begin{proof}
Denote $(p^r-1)\rho_0+\rho_{s, t}$ by $\pi$.
Since $H^0(\pi)=L(\pi)=V(\pi)$,
Lemma \ref{a translation} and Remark \ref{forWeyl} imply that $T=H^0(\pi)\otimes T_{ev}(\lambda)^{[r]}$ has both good and Weyl filtrations. 
Moreover, $\pi+p^r\lambda$ is the unique highest weight of $T$. It remains to prove that $T$ is indecomposable (cf. Theorem 4.17 and Theorem 4.19 of \cite{zubmar}; see also Theorem 1.1 of \cite{don2}). 

We have $End_G(T)\simeq (T^*\otimes T)^G =(T^*\otimes T)^{Dist(G)}$. The action of $Dist(G)$ on $End_K(T)$ 
can be described as $(\psi\cdot\phi)(v)=(-1)^{|\phi||\psi|}\psi \phi(s_{Dist(G)}(\psi) v))$ for $\psi\in Dist(G)$ and $\phi\in End_K(T)$.
We will follow the idea from Lemma in \S 2 of \cite{don1}. Since $G_r$ acts on $T_{ev}(\lambda)^{[r]}$ trivially, there is an isomorphism
of superalgebras
$$End_{G_r} (T)\simeq End_{G_r}(L_r((\pi))\otimes End_K(T_{ev}(\lambda)^{[r]}).$$
Additionally, we have $L_r(\pi)^{V^{opp}_r}= Z'_r(\pi)^{V^{opp}_r}=K_{\mu}$, hence $End_{G_r}(L_r(\pi))=K id_{L(\pi)}$. 

Finally, 
$$End_G(T)=End_{G_r}(T)^G\simeq K id_{L(\pi)}\otimes End_G(T_{ev}(\lambda)^{[r]})\simeq End_G(T_{ev}(\lambda)^{[r]}).$$
Using Lemma \ref{endomorphismring} we derive that $End_G(T_{ev}(\lambda)^{[r]})\simeq End_{G_{res}}(T_{ev}(\lambda))$ is a local purely even superalgebra, 
hence $T$ is indecomposable.
\end{proof}
\begin{lm}\label{restrictionon}
Let $H^0((p^r-1)\rho_0+\rho_{s, t})$ be a Steinberg supermodule. Then
$$H^0((p^r-1)\rho_0+\rho_{s, t})|_{G_r B}\simeq \Hat{L}'_r((p^r-1)\rho_0+\rho_{s, t})$$ and
$$H^0((p^r-1)\rho_0+\rho_{s, t})|_{G_r B^{opp}}\simeq \Hat{L}_r((p^r-1)\rho_0+\rho_{s, t}).$$
Thus 
$$H^0((p^r-1)\rho_0+\rho_{s, t})|_{G_r T}\simeq \Tilde{L}_r((p^r-1)\rho_0+\rho_{s, t}).$$
\end{lm}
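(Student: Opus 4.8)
The plan is to identify $H^0(\pi)$ (where $\pi = (p^r-1)\rho_0 + \rho_{s,t}$) with the relevant simple object over $G_r B$, $G_r B^{opp}$ and $G_r T$ by combining two facts: that $H^0(\pi)$ is irreducible as a $G_r$-supermodule, and that it is generated (resp.\ cogenerated) by a $B^{opp}$- (resp.\ $B$-) primitive vector of weight $\pi$. I would first recall from Lemma~\ref{universalandcouniversal} that $\Hat{Z}_r(\pi)$ is the universal $G_rB^{opp}$-supermodule generated by a $B^{opp}$-primitive vector of weight $\pi$, and that $\Hat{L}_r(\pi)$ is its unique irreducible quotient; dually $\Hat{Z}'_r(\pi)$ is couniversal in $G_rB$-$\mathsf{SMod}$ with socle $\Hat{L}'_r(\pi)$.

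Now restrict $H^0(\pi)$ to $G_rB^{opp}$. Since $\pi$ is a dominant weight and $H^0(\pi)$ has simple socle $L(\pi)$, the vector spanning the highest-weight line $H^0(\pi)_\pi$ (which exists and is one-dimensional because $\pi$ is the highest weight of $H^0(\pi)$, all other weights being strictly smaller in the Bruhat order) is a $B^{opp}$-primitive vector of weight $\pi$, and it generates $H^0(\pi)$ as a $G$-supermodule since $H^0(\pi)$ is irreducible. The key point I must check is that it also generates $H^0(\pi)$ as a $G_rB^{opp}$-supermodule: this follows because $H^0(\pi)=L(\pi)$ is already irreducible over $G_r$, so it is certainly irreducible — hence generated by any nonzero vector — over the larger group $G_rB^{opp}$. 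Therefore $H^0(\pi)$ is a nonzero quotient of $\Hat{Z}_r(\pi)$; being irreducible, it must be the unique irreducible quotient, i.e.\ $H^0(\pi)|_{G_rB^{opp}}\simeq \Hat{L}_r(\pi)$. Applying the self-duality $M\mapsto M^{<t>}$ and using $H^0(\pi)^{<t>}\simeq V(\pi)=H^0(\pi)$ (since $\pi$ is Steinberg, so $H^0(\pi)=L(\pi)=V(\pi)$), together with $\Hat{Z}'_r(\pi)^{<t>}\simeq\Hat{Z}_r(\pi)$ and $\Hat{L}'_r(\pi)^{<t>}\simeq\Hat{L}_r(\pi)$ from Section~9, gives $H^0(\pi)|_{G_rB}\simeq\Hat{L}'_r(\pi)$.

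For the last assertion I would simply restrict further: $H^0(\pi)|_{G_rT}$ is a subsupermodule of $H^0(\pi)|_{G_rB}\simeq\Hat{L}'_r(\pi)$ restricted to $G_rT$, but by Lemma~\ref{simplesoverG_rT} we have $soc_{G_r}\Hat{Z}'_r(\pi)=soc_{G_rT}\Hat{Z}'_r(\pi)=soc_{G_rB}\Hat{Z}'_r(\pi)=\Hat{L}'_r(\pi)$, so $\Hat{L}'_r(\pi)$ is already irreducible over $G_rT$ and equals $\Tilde{L}_r(\pi)$; since $H^0(\pi)|_{G_r}$ is irreducible, $H^0(\pi)|_{G_rT}$ is irreducible too, and the weight-$\pi$ line forces it to be $\Tilde{L}_r(\pi)$.

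The main obstacle, and the step deserving the most care, is verifying that the $B^{opp}$-primitive generator of $H^0(\pi)$ of weight $\pi$ genuinely cuts out all of $H^0(\pi)$ as a $G_rB^{opp}$-module — equivalently, that the restriction map from $G$-generation to $G_rB^{opp}$-generation loses nothing. I expect to dispatch this cleanly via the irreducibility of $H^0(\pi)=L(\pi)$ over $G_r$ (noted in Section~10 right after the definition of Steinberg supermodules): once the restriction to the \emph{smaller} group $G_r$ is already simple, irreducibility over every intermediate group $G_r\le G_rT\le G_rB^{opp}$ is automatic, and the identification with $\Hat{L}_r(\pi)$, $\Hat{L}'_r(\pi)$, $\Tilde{L}_r(\pi)$ is then forced by matching highest weights and invoking the (co)universality statements of Section~9.
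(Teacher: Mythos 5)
Your proposal is correct, but it pins the module down by a different mechanism than the paper. Writing $\pi=(p^r-1)\rho_0+\rho_{s,t}$, you work on the $G_r B^{opp}$ side first: the highest-weight line of $H^0(\pi)=L(\pi)$ is $B^{opp}$-primitive of weight $\pi$, and since the Steinberg supermodule is already irreducible over $G_r$ it is irreducible over $G_r B^{opp}$, so by the universality half of Lemma~\ref{universalandcouniversal} the restriction is an irreducible quotient of $\Hat{Z}_r(\pi)$, hence its simple top $\Hat{L}_r(\pi)$; the $G_r B$ and $G_r T$ statements then follow from the duality $M\mapsto M^{<t>}$ and Lemma~\ref{simplesoverG_rT}. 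The paper instead argues on the $G_r B$ side via the couniversality half: it embeds $H^0(\pi)|_{G_r B}$ into $\Hat{Z}'_r(\pi)$ and closes the gap with the dimension count $\dim H^0(\pi)=\dim K[U^{opp}]\dim H^0_{ev}(\pi)=\dim K[V^{opp}_r]=\dim\Hat{Z}'_r(\pi)$ (cf. II.9.16 of \cite{jan}), obtaining $H^0(\pi)|_{G_r B}\simeq\Hat{Z}'_r(\pi)$, and only then identifies $\Hat{Z}'_r(\pi)=\Hat{L}'_r(\pi)$ using the $G_r$-irreducibility of the Steinberg supermodule together with Lemma~\ref{simplesoverG_rT}. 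Both arguments rest on the same ingredients (Lemmas~\ref{universalandcouniversal} and \ref{simplesoverG_rT}, the $<t>$-duality, and $G_r$-irreducibility of $L(\pi)$); yours avoids the dimension formula altogether and is in that sense more elementary, whereas the paper's route establishes the additional fact that $\Hat{Z}'_r(\pi)$ is itself simple at the Steinberg weight, i.e. the restriction fills out the whole coinduced module --- the superanalogue of the classical statement about $\Hat{Z}_r$ at Steinberg weights. One cosmetic slip: $H^0(\pi)|_{G_r T}$ is not a subsupermodule of $H^0(\pi)|_{G_r B}$ restricted to $G_r T$ but the same underlying supermodule; this does not affect your argument, since Lemma~\ref{simplesoverG_rT} identifies $\Hat{L}'_r(\pi)|_{G_r T}$ with $\Tilde{L}_r(\pi)$ exactly as you say.
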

\begin{proof}
Denote $(p^r-1)\rho_0+\rho_{s, t}$ by $\pi$, again. Lemma \ref{universalandcouniversal} implies that $H^0(\pi)|_{G_r B}$ is embedded into $\Hat{Z}'_r(\pi)$. 
On the other hand, $\dim H^0(\pi)=\dim K[U^{opp}]\dim H^0_{ev}(\pi)=\dim K[V^{opp}_r]=\dim \Hat{Z}'_r(\pi)$ (cf. II.9.16 of \cite{jan}). 
Thus $H^0(\pi)|_{G_r B}\simeq \Hat{Z}'_r(\pi)$ and by Lemma \ref{simplesoverG_rT} we obtain
$\Hat{Z}'_r(\pi)=\Hat{L}'_r(\pi)$.

Since $H^0(\pi)^{<t>}=
L(\pi)^{<t>}\simeq L(\pi)=H^0(\pi)$, the second statement follows.
The last statement follows by Lemma \ref{simplesoverG_rT}.
\end{proof}
The proof of the following lemma can be modified from the proof of Proposition II.2.14 in \cite{jan}.
\begin{lm}\label{asinjan}
If $\lambda\not<\mu$, then 
$$Ext^1_{G_r T}(\Tilde{L}_r(\lambda), \Tilde{L}_r(\mu))\simeq Hom_{G_r T}(rad_{G_r T}\Hat{Z}_r(\lambda), \Tilde{L}_r(\mu)).$$
\end{lm}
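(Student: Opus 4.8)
The plan is to mimic the proof of Proposition II.2.14 of \cite{jan}, working inside the category $G_r T-\mathsf{SMod}$ and exploiting the couniversal/universal properties of $\Hat{Z}'_r$ and $\Hat{Z}_r$ restricted to $G_r T$ that were established in \S 9. Write $\Hat{Z}_r(\lambda)$, $\Hat{Z}'_r(\lambda)$ for the restrictions to $G_r T$, and recall from Lemma \ref{simplesoverG_rT} that $\Tilde{L}_r(\lambda)=\mathrm{soc}\,\Hat{Z}'_r(\lambda)=\mathrm{top}\,\Hat{Z}_r(\lambda)$, that all weights $\mu$ of $\Hat{Z}_r(\lambda)$ satisfy $\mu\leq\lambda$, and that $\phi\mapsto\phi^{<t>}$ is a self-duality of finite-dimensional $G_r T$-supermodules fixing each $\Tilde{L}_r(\lambda)$. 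First I would apply $\mathrm{Hom}_{G_r T}(-,\Tilde{L}_r(\mu))$ to the short exact sequence
$$0\to \mathrm{rad}\,\Hat{Z}_r(\lambda)\to \Hat{Z}_r(\lambda)\to \Tilde{L}_r(\lambda)\to 0,$$
obtaining the long exact sequence
$$0\to \mathrm{Hom}(\Tilde{L}_r(\lambda),\Tilde{L}_r(\mu))\to \mathrm{Hom}(\Hat{Z}_r(\lambda),\Tilde{L}_r(\mu))\to \mathrm{Hom}(\mathrm{rad}\,\Hat{Z}_r(\lambda),\Tilde{L}_r(\mu))\to \mathrm{Ext}^1(\Tilde{L}_r(\lambda),\Tilde{L}_r(\mu))\to \mathrm{Ext}^1(\Hat{Z}_r(\lambda),\Tilde{L}_r(\mu)),$$
all $\mathrm{Hom}$'s and $\mathrm{Ext}$'s taken over $G_r T$. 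The statement will follow once I show two vanishing facts under the hypothesis $\lambda\not<\mu$: first, $\mathrm{Hom}_{G_r T}(\Hat{Z}_r(\lambda),\Tilde{L}_r(\mu))\to\mathrm{Hom}_{G_r T}(\mathrm{rad}\,\Hat{Z}_r(\lambda),\Tilde{L}_r(\mu))$ is the zero map (equivalently the first connecting map is surjective), and second, $\mathrm{Ext}^1_{G_r T}(\Hat{Z}_r(\lambda),\Tilde{L}_r(\mu))=0$.

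For the first point: a nonzero homomorphism $\Hat{Z}_r(\lambda)\to\Tilde{L}_r(\mu)$ must be surjective (the target is irreducible) and hence factors through $\mathrm{top}\,\Hat{Z}_r(\lambda)=\Tilde{L}_r(\lambda)$, forcing $\mu=\lambda$; but then it sends $\mathrm{rad}\,\Hat{Z}_r(\lambda)$ to $0$. So the restriction map on $\mathrm{Hom}$'s is zero whether or not $\mu=\lambda$, and the connecting map $\mathrm{Hom}(\mathrm{rad}\,\Hat{Z}_r(\lambda),\Tilde{L}_r(\mu))\to\mathrm{Ext}^1(\Tilde{L}_r(\lambda),\Tilde{L}_r(\mu))$ is injective. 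For the second point I use the universal property from Lemma \ref{universalandcouniversal} together with the dualized statement: $\Hat{Z}'_r(\lambda)=\Hat{Z}_r(\lambda)^{<t>}$ is couniversal, so $\mathrm{Ext}^1_{G_r T}(\Hat{Z}_r(\lambda),N)\simeq \mathrm{Ext}^1_{G_r T}(N^{<t>},\Hat{Z}'_r(\lambda))$, and an extension of $\Tilde{L}_r(\mu)^{<t>}=\Tilde{L}_r(\mu)$ by $\Hat{Z}'_r(\lambda)$ must split because any such extension module is again cogenerated by a $B^{opp}_r T$-primitive vector of weight $\lambda$ provided no weight $\mu$ of $\Tilde{L}_r(\mu)$ exceeds $\lambda$ — which is exactly guaranteed by $\lambda\not<\mu$ (all weights of $\Tilde{L}_r(\mu)$ are $\leq\mu$, and $\mu\not>\lambda$, so $\lambda$ stays the unique maximal weight); couniversality then forces the embedding into $\Hat{Z}'_r(\lambda)$, i.e. the extension splits. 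Hence $\mathrm{Ext}^1_{G_r T}(\Hat{Z}_r(\lambda),\Tilde{L}_r(\mu))=0$, and the long exact sequence collapses to the claimed isomorphism.

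I expect the main obstacle to be the careful bookkeeping of weights in the splitting argument for $\mathrm{Ext}^1_{G_r T}(\Hat{Z}_r(\lambda),\Tilde{L}_r(\mu))=0$: one must verify that in a nonsplit extension of $\Tilde{L}_r(\mu)$ by $\Hat{Z}'_r(\lambda)$ the $\lambda$-weight space remains one-dimensional and $B^{opp}_r T$-primitive, so that the couniversality of $\Hat{Z}'_r(\lambda)$ (Lemma \ref{universalandcouniversal}) applies verbatim; this is where the hypothesis $\lambda\not<\mu$ is genuinely used, and it parallels the role played by the order condition in the proof of Proposition II.2.14 of \cite{jan}. Everything else is formal diagram-chasing and the self-duality $M\mapsto M^{<t>}$ on finite-dimensional $G_r T$-supermodules, which fixes the $\Tilde{L}_r(\mu)$ and swaps $\Hat{Z}_r$ with $\Hat{Z}'_r$.
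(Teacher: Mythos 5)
Your overall skeleton --- applying $Hom_{G_r T}(-,\Tilde{L}_r(\mu))$ to $0\to rad_{G_r T}\Hat{Z}_r(\lambda)\to\Hat{Z}_r(\lambda)\to\Tilde{L}_r(\lambda)\to 0$ and observing that every homomorphism $\Hat{Z}_r(\lambda)\to\Tilde{L}_r(\mu)$ kills the radical --- is exactly the frame of Proposition II.2.14 of \cite{jan} that the paper intends, and it correctly reduces the lemma to the vanishing $Ext^1_{G_r T}(\Hat{Z}_r(\lambda),\Tilde{L}_r(\mu))=0$ for $\lambda\not<\mu$. The gap is in your proof of this vanishing. The couniversal property of $\Hat{Z}'_r(\lambda)$ (Lemma \ref{universalandcouniversal} and its $G_r T$-version) requires \emph{every} weight $\nu$ of the supermodule to satisfy $\nu\leq\lambda$, which is strictly stronger than your condition that no weight of $\Tilde{L}_r(\mu)$ exceeds $\lambda$: the hypothesis $\lambda\not<\mu$ allows $\lambda$ and $\mu$ to be incomparable, and then a non-split extension $E$ of $\Tilde{L}_r(\mu)$ by $\Hat{Z}'_r(\lambda)$ has $E_{\mu}\neq 0$ with $\mu\not\leq\lambda$, so the couniversality lemma does not apply (and $E$ cannot embed into $\Hat{Z}'_r(\lambda)$ for weight and dimension reasons). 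Your argument therefore says nothing in this case, although the lemma must cover it; the slide from ``no weight of $\Tilde{L}_r(\mu)$ is $>\lambda$'' to ``$\lambda$ stays the unique maximal weight'' is precisely where it breaks. (Even when $\mu\leq\lambda$, an embedding $E\hookrightarrow\Hat{Z}'_r(\lambda)$ is not itself a splitting; it contradicts $\dim E>\dim\Hat{Z}'_r(\lambda)$, and it is this contradiction with the non-splitness assumption that you must invoke.)

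The vanishing should instead be proved as in \cite{jan}, where the hypothesis enters only through ``$\Tilde{L}_r(\mu)$ has no weight $>\lambda$''. Since $Dist(G_r T)$ is a free $Dist(B^{opp}_r T)$-supermodule and $\Hat{Z}_r(\lambda)|_{G_r T}\simeq Dist(G_r T)\otimes_{Dist(B^{opp}_r T)}K_{\lambda}$, generalized Frobenius reciprocity (Shapiro's lemma) gives $Ext^1_{G_r T}(\Hat{Z}_r(\lambda),\Tilde{L}_r(\mu))\simeq Ext^1_{B^{opp}_r T}(K_{\lambda},\Tilde{L}_r(\mu))$. For an extension $0\to\Tilde{L}_r(\mu)\to E\to K_{\lambda}\to 0$ of $B^{opp}_r T$-supermodules, pick a homogeneous weight vector $e\in E_{\lambda}$ lifting the generator; then $Dist(V^{opp}_r)^+e$ lies in the sum of the weight spaces $\Tilde{L}_r(\mu)_{\nu}$ with $\nu>\lambda$, and any such $\nu$ would satisfy $\lambda<\nu\leq\mu$, contradicting $\lambda\not<\mu$. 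Hence $Ke$ is a $B^{opp}_r T$-subsupermodule splitting the sequence, so the $Ext^1$ vanishes. With this replacement (which also makes the $<t>$-duality detour unnecessary) the rest of your argument goes through.
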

For any finite-dimensional $G_r T$-supermodules $M_1$ and $M_2$ there is a (superspace) isomorphisms
$$Ext^{n}_{G_r T}(M_1, M_2)\simeq Ext^{n}_{G_r T}(M_2^{<t>}, M_1^{<t>}), n\geq 0.$$
We will only sketch a proof of this statement. By Proposition 3.2 from \cite{zub1} (see also Lemma I.4.4 of \cite{jan}), there is a superspace isomorphism
$$Ext^n_{G_r T}(M_1, M_2)\simeq Ext^n_{G_r T}(K, M_1^*\otimes M_2).$$
We leave it for the reader to verify that the $G_r T$-supermodule $M_1^*\otimes M_2$ is isomorphic to the $G_r T$-supermodule
$(M_2^{<t>})^*\otimes M_1^{<t>}$. 
\begin{lm}\label{asinjanforsteinberg}
Every Steinberg supermodule is both injective and projective as a $G_r$-supermodule and as a $G_r T$-supermodule.
\end{lm}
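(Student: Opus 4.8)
The plan is to establish the $G_rT$-statement first and then deduce the $G_r$-statement from it. By Lemma \ref{restrictionon}, if $St = H^0((p^r-1)\rho_0+\rho_{s,t})$ is a Steinberg supermodule then $St|_{G_rT}\simeq \Tilde{L}_r(\pi)$ where $\pi = (p^r-1)\rho_0+\rho_{s,t}$, and moreover $St|_{G_rB}\simeq \Hat{Z}'_r(\pi) = \Hat{L}'_r(\pi)$ while $St|_{G_rB^{opp}}\simeq \Hat{Z}_r(\pi) = \Hat{L}_r(\pi)$. So the key fact to exploit is that $\pi$ is simultaneously the highest weight of an induced ($\Hat{Z}'_r$) and a Weyl-type ($\Hat{Z}_r$) object, which is precisely the supergroup analogue of the classical fact that $St_r$ is both $\hat{Z}_r$ and $\hat{Z}'_r$.

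First I would prove injectivity as a $G_rT$-supermodule. Since $St|_{G_rT}\simeq\Tilde{L}_r(\pi)$ and $\Tilde{L}_r(\pi)$ is the socle of $\Hat{Z}'_r(\pi)|_{G_rT}$ which equals $\Hat{Z}'_r(\pi)|_{G_rT}$ itself (by Lemma \ref{restrictionon}), it suffices to show $\Hat{Z}'_r(\pi)|_{G_rT}$ is injective in $G_rT-\mathsf{SMod}$. The standard argument: $\Hat{Z}'_r(\lambda)|_{G_rT} = ind^{G_rT}_{B_rT}K_\lambda$, and induction from $B_rT$ is exact here (since $G_rT/B_rT$ is affine — it is a product with $V^{opp}_r$), so $ind^{G_rT}_{B_rT}$ takes injective $B_rT$-supermodules to injective $G_rT$-supermodules; as $K_\pi$ is injective (in fact it generates, up to parity, the injectives in $B_rT-\mathsf{SMod}$, since $B_r$ is unipotent infinitesimal and $K[B_r]$ as a $B_r$-module is $K_0$-isotypic in its socle while $T$-weights separate), $\Hat{Z}'_r(\pi)|_{G_rT}$ is injective. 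To make this airtight one checks that $St|_{B^{opp}_rT}\simeq K_\pi\otimes K[V^{opp}_r]$ is an injective $B^{opp}_rT$-supermodule (this was essentially noted in Section 6 and Section 9: the restriction is a free/cofree object), and then invokes exactness of $ind^{G_rT}_{B^{opp}_rT}$. Projectivity as a $G_rT$-supermodule then follows from the self-duality $M\mapsto M^{<t>}$: by Lemma \ref{simplesoverG_rT}, $\Tilde{L}_r(\pi)^{<t>}\simeq\Tilde{L}_r(\pi)$, and the duality interchanges injectives and projectives in the finite-dimensional subcategory of $G_rT-\mathsf{SMod}$, so $St|_{G_rT}$ is also projective. (Alternatively one could cite Lemma \ref{projandinjoverinfinitesimal}-style reasoning adapted to $G_rT$, but the duality argument is cleaner.)

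Next I would descend to $G_r$. Since $G_r\subseteq G_rT$ with $G_rT/G_r\simeq T$ a torus (hence all of whose supermodules are semisimple, equivalently linearly reductive), restriction from $G_rT$ to $G_r$ takes injectives to injectives and projectives to projectives: indeed $G_r$ is faithfully exact in $G_rT$ because $G_rT/G_r$ is affine, and conversely since $T$ is linearly reductive the induction $ind^{G_rT}_{G_r}$ is exact so $ind$ preserves injectivity while, by duality or by the explicit decomposition $Dist(G_rT) = Dist(G_r)\otimes Dist(T)$ combined with $Dist(T)$ being semisimple, restriction preserves injectivity as well. (Equivalently: any injective $G_rT$-supermodule is a summand of $K[G_rT] = K[G_r]\otimes K[T]$-type objects whose $G_r$-restriction is again injective.) Therefore $St|_{G_r}$ is injective, and then by Lemma \ref{projandinjoverinfinitesimal} — $G_r$ being an infinitesimal supergroup — injectivity and projectivity coincide, so $St|_{G_r}$ is projective as well. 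This also re-derives projectivity over $G_rT$ if one prefers to argue in that direction.

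The main obstacle I expect is the careful bookkeeping around why the relevant restrictions ($St|_{B^{opp}_rT}$, and the restriction functors between $G_r$, $G_rT$, $G_rB$) genuinely preserve injectivity/projectivity in the \emph{super} setting, since one must keep track of parity shifts and cannot simply quote the classical statements verbatim; the dimension count $\dim St = \dim K[V^{opp}_r] = \dim\Hat{Z}'_r(\pi)$ (already used in Lemma \ref{restrictionon}) is what forces the restriction of $St$ to be the whole of $\Hat{Z}'_r(\pi)$ rather than a proper submodule, and this is the crux that makes the induction-of-an-injective argument available. Everything else is formal once that identification and the exactness of the appropriate induction functors (which follow from affineness of the relevant quotient superschemes, a fact recorded in Section 2) are in hand.
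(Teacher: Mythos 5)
Your reduction steps are fine (restriction from $G_rT$ to the normal subsupergroup $G_r$ preserves injectivity, Lemma \ref{projandinjoverinfinitesimal} converts injectivity into projectivity over $G_r$, and the self-duality $\Tilde{L}_r(\pi)^{<t>}\simeq\Tilde{L}_r(\pi)$ from Lemma \ref{simplesoverG_rT} converts injectivity into projectivity over $G_rT$), but the central step --- injectivity over $G_rT$ --- is broken. First, $K_\pi$ is \emph{not} an injective $B_rT$-supermodule: $B_r$ is not unipotent (it contains $T_r$), and more to the point $K_\pi$ restricted to the normal unipotent infinitesimal subsupergroup $V_r\unlhd B_rT$ is the trivial module, which by Lemma \ref{injandproj} cannot be injective since the unique indecomposable injectives over $V_r$ are $K[V_r]$ and $\Pi K[V_r]$ of dimension $>1$; as restriction to a normal (hence exact) subsupergroup preserves injectivity, $K_\pi$ is not $B_rT$-injective, so ``induce an injective'' is unavailable. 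Second, your fallback --- $St|_{B^{opp}_rT}\simeq K_\pi\otimes K[V^{opp}_r]$ is injective, plus exactness of induction --- does not imply injectivity over $G_rT$: injectivity of a restriction to a subsupergroup never implies injectivity over the ambient group, and this particular criterion cannot work because \emph{every} $\Hat{Z}'_r(\lambda)$ satisfies $\Hat{Z}'_r(\lambda)|_{B^{opp}_rT}\simeq K_\lambda\otimes K[V^{opp}_r]$, while $\Hat{Z}'_r(\lambda)$ is injective over $G_rT$ only for very special $\lambda$. In other words, your argument never actually uses the defining feature of the Steinberg weight (simplicity of $\Hat{Z}_r(\pi)=\Hat{Z}'_r(\pi)$), and as written it would ``prove'' that all $\Hat{Z}'_r(\lambda)$ are $G_rT$-injective, which is false.

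The missing idea is an $Ext^1$-vanishing computation, which is how the paper proceeds: one shows $Ext^1_{G_rT}(\Tilde{L}_r(\mu), St)=0$ and $Ext^1_{G_rT}(St,\Tilde{L}_r(\mu))=0$ for all $\mu$, by combining Lemma \ref{asinjan} (which, when the first weight is not smaller than the second, identifies $Ext^1_{G_rT}$ between simples with a $Hom$ out of $rad_{G_rT}\Hat{Z}_r(\lambda)$ --- and this radical vanishes precisely at the Steinberg weight, by Lemma \ref{restrictionon} and Lemma \ref{simplesoverG_rT}) with the contravariant $Ext$-symmetry $Ext^n_{G_rT}(M_1,M_2)\simeq Ext^n_{G_rT}(M_2^{<t>},M_1^{<t>})$ and the self-duality of $\Tilde{L}_r(\pi)$ to handle the remaining order of weights; this is the superization of Jantzen's Proposition II.10.2. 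Once that vanishing is in place, your duality and descent arguments for projectivity and for the $G_r$-statements go through as you wrote them.
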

\begin{proof}
Combining the above observation with Lemma \ref{asinjan}, one can easily superize Proposition II.10.2 from \cite{jan}.
\end{proof}
\begin{lm}\label{simplelemma}
Let $H$ be an algebraic supergroup and $N$ be a normal subsupergroup of $H$. For every $H$-supermodules $M_1$ and $M_2$ such that
$M_2^N=M_2$ there is an isomorphism of $H/N$-supermodules $Ext^{\bullet}_N(M_1, M_2)\simeq Ext^{\bullet}_N(M_1, K)\otimes M_2$. 
\end{lm}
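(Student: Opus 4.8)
The plan is to compute $Ext^{\bullet}_N(M_1,M_2)$ from an injective resolution of the trivial supermodule $K$ and to exploit that the hypothesis $M_2^N=M_2$ means exactly that $N$ acts trivially on $M_2$. So, restricted to $N$, the supermodule $M_2$ is trivial; assuming $M_2$ finite-dimensional (the only case needed in the applications) one has $M_2|_N\cong K^{\oplus a}\oplus\Pi K^{\oplus b}$. Since $N$ is normal in $H$, the quotient $H/N$ is affine and hence $N$ is faithfully exact in $H$; in particular the restriction functor $H-\mathsf{SMod}\to N-\mathsf{SMod}$ carries injectives to injectives, so an $H$-equivariant injective resolution of an $H$-supermodule restricts to an $N$-injective resolution.

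Concretely, I would pick an injective resolution $0\to K\to J^0\to J^1\to\cdots$ in $H-\mathsf{SMod}$ and form the complex $J^{\bullet}\otimes M_2$ with the diagonal $H$-action. Because $N$ acts trivially on $M_2$, the $N$-action on each $J^i\otimes M_2$ runs only through the first tensor factor, so $J^i\otimes M_2\cong (J^i)^{\oplus a}\oplus(\Pi J^i)^{\oplus b}$ is an injective $N$-supermodule and $0\to M_2\to J^{\bullet}\otimes M_2$ is an $N$-injective resolution of $M_2$. Hence $Ext^{\bullet}_N(M_1,M_2)$ is computed by the complex $Hom_N(M_1,J^{\bullet}\otimes M_2)$ equipped with its induced $H$-action, and by the standard description of the conjugation action this $H$-action realises the canonical $H/N$-supermodule structure on $Ext^{\bullet}_N(M_1,M_2)$. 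Since $M_2$ is finite-dimensional with trivial $N$-action there is a natural isomorphism of complexes of $H$-supermodules $Hom_N(M_1,J^{\bullet}\otimes M_2)\cong Hom_N(M_1,J^{\bullet})\otimes M_2$, and since tensoring by the finite-dimensional superspace $M_2$ is exact, passing to cohomology yields
$$Ext^{\bullet}_N(M_1,M_2)\;\cong\;H^{\bullet}\big(Hom_N(M_1,J^{\bullet})\big)\otimes M_2\;\cong\;Ext^{\bullet}_N(M_1,K)\otimes M_2.$$
As both sides carry trivial $N$-action, this is an isomorphism of $H/N$-supermodules, as claimed.

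The main point that needs care is the bookkeeping of the $H/N$-structures: one must verify that the natural isomorphisms $Hom_N(M_1,J^i\otimes M_2)\cong Hom_N(M_1,J^i)\otimes M_2$ are $H$-equivariant for the diagonal super-action (with the correct Koszul signs), and invoke the standard fact -- already used in the spectral-sequence arguments via Proposition \ref{sequence} and \cite{zubscal} -- that the conjugation $H$-action on $Ext^{\bullet}_N$ of two $H$-supermodules is computed by any $H$-equivariant resolution in the second variable that restricts to an $N$-injective resolution. Granting this, the remaining steps are routine. Finally, the finite-dimensionality assumption on $M_2$ can be dropped: $N-\mathsf{SMod}$ is locally finite, so arbitrary direct sums of injectives are again injective, and one replaces $\otimes M_2$ by the corresponding direct sum throughout; alternatively one may argue via the Yoneda pairing $Ext^{\bullet}_N(M_1,K)\otimes Hom_N(K,M_2)\to Ext^{\bullet}_N(M_1,M_2)$ together with $Hom_N(K,M_2)=M_2^N=M_2$, whose $H/N$-equivariance follows from multiplicativity of the conjugation action and which is bijective already after forgetting the $H$-action.
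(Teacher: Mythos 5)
Your argument is correct and follows essentially the same route as the paper: take an injective resolution $K\to I^{\bullet}$ of the trivial $H$-supermodule, observe that $M_2\to I^{\bullet}\otimes M_2$ is an injective resolution of $M_2$ as an $N$-supermodule, and identify $Hom_N(M_1,I^{\bullet}\otimes M_2)$ with $Hom_N(M_1,I^{\bullet})\otimes M_2$ as complexes of $H/N$-supermodules before passing to cohomology. Your direct verification of $N$-injectivity of $I^i\otimes M_2$ via the triviality of the $N$-action (rather than citing $H$-injectivity and restricting) and your remark on removing finite-dimensionality are only minor variations on the same proof.
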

\begin{proof}
Let $K\to I^{\bullet}$ be an injective resolution of the trivial $H$-supermodule. Then $M_2\to I^{\bullet}\otimes M_2$ is an injective resolution of $M_2$ as a $H$-supermodule and hence, also as a $N$-supermodule. Since 
$Hom_N(M_1, I^{\bullet}\otimes M_2)$ and $Hom_N(M_1, I^{\bullet})\otimes M_2$ are isomorphic to each other as complexes of $H/N$-supermodules, the claim follows.
\end{proof}
\begin{lm}\label{strangelemma}
If $H^0((p^r-1)\rho_0+\rho_{s, t})$ is a Steinberg supermodule, then for every $G_r$-supermodule $M$ we have  
$Ext^1_U (M, H^0_{ev}((p^r-1)\rho_0+\rho_{s, t}))^{G_{ev, r}}=0$.
\end{lm}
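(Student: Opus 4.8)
The plan is to trade the $Ext$ over $U$ for an $Ext$ over $G_r$ computed against the restriction of the Steinberg supermodule, which is injective by Lemma~\ref{asinjanforsteinberg}. Write $\pi=(p^r-1)\rho_0+\rho_{s,t}$. Since $U$ and $U^{opp}$ are infinitesimal and generated by odd root subgroups, $Dist(U),Dist(U^{opp})\subseteq Dist(G_1)\subseteq Dist(G_r)$, so $U,U^{opp}\subseteq G_r$. Put $P_r=P\cap G_r$: it is a finite subsupergroup of $G_r$, of the form $P_r=U\rtimes G_{ev,r}$ with $U\unlhd P_r$ and $P_r/U\simeq G_{ev,r}$, and being finite it is faithfully exact in $G_r$. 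Throughout, $H^0_{ev}(\pi)$ is regarded as a $P_r$-supermodule by inflation along $P_r\to G_{ev,r}$ (so that $U$ acts trivially on it); this is the $U$-module structure and the $G_{ev,r}$-action referred to in the statement.

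The first point is the base-change isomorphism $ind^{G_r}_{P_r}H^0_{ev}(\pi)\simeq H^0(\pi)|_{G_r}$. Since $H^0(\pi)=ind^G_P H^0_{ev}(\pi)$ and $G=G_{ev}G_r\subseteq PG_r$, the natural morphism $G_r/P_r\to G/P$ is an isomorphism, whence $(ind^G_P X)|_{G_r}\simeq ind^{G_r}_{P_r}(X|_{P_r})$ for every $P$-supermodule $X$. As $P_r$ is faithfully exact in $G_r$, the functor $ind^{G_r}_{P_r}$ is exact, and being right adjoint to the (exact) restriction functor it carries injective supermodules to injective ones; Frobenius reciprocity then yields, for every $G_r$-supermodule $M$,
$$
Ext^\bullet_{P_r}(M,H^0_{ev}(\pi))\simeq Ext^\bullet_{G_r}(M,ind^{G_r}_{P_r}H^0_{ev}(\pi))\simeq Ext^\bullet_{G_r}(M,H^0(\pi)|_{G_r}).
$$
By Lemma~\ref{asinjanforsteinberg} the supermodule $H^0(\pi)|_{G_r}$ is injective over $G_r$, so $Ext^i_{P_r}(M,H^0_{ev}(\pi))=0$ for all $i\geq 1$.

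To descend from $P_r$ to $U$, set $C=Hom_U(M,H^0_{ev}(\pi))$. As $U$ acts trivially on $H^0_{ev}(\pi)$, Lemma~\ref{simplelemma} gives an isomorphism of $G_{ev,r}$-supermodules $C\simeq Hom_U(M,K)\otimes H^0_{ev}(\pi)$. Now $G_{ev,r}$ is a product of general linear Frobenius kernels and $H^0_{ev}(\pi)$ is the $r$-th Steinberg module of $G_{ev}$ twisted by the one-dimensional character $\rho_{s,t}$ (cf. II.3.19 of \cite{jan}), so its restriction to $G_{ev,r}$ is both projective and injective (cf. II.10.2 of \cite{jan}). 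Hence $C$ is the tensor product of a $G_{ev,r}$-supermodule with a projective one, so it is projective and therefore injective over $G_{ev,r}$, and $H^{\geq 1}(G_{ev,r},C)=0$. Plugging this together with $Ext^1_{P_r}(M,H^0_{ev}(\pi))=0$ into the five-term exact sequence
$$
0\to H^1(G_{ev,r},C)\to Ext^1_{P_r}(M,H^0_{ev}(\pi))\to Ext^1_U(M,H^0_{ev}(\pi))^{G_{ev,r}}\to H^2(G_{ev,r},C)
$$
of the spectral sequence $H^p(G_{ev,r},Ext^q_U(M,H^0_{ev}(\pi)))\Rightarrow Ext^{p+q}_{P_r}(M,H^0_{ev}(\pi))$ attached to $U\unlhd P_r$ (cf. Proposition 3.1 of \cite{zubscal}) forces $Ext^1_U(M,H^0_{ev}(\pi))^{G_{ev,r}}=0$.

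The main obstacle is the base-change step: one must realize $P_r=P\cap G_r$ as $U\rtimes G_{ev,r}$ and establish $ind^{G_r}_{P_r}H^0_{ev}(\pi)\simeq H^0(\pi)|_{G_r}$, for it is precisely this that makes the injectivity of the Steinberg supermodule over $G_r$ applicable. After that, the spectral-sequence bookkeeping is routine, the key observation being that $Hom_U(M,H^0_{ev}(\pi))$ is injective over $G_{ev,r}$ because the even Steinberg module is projective there.
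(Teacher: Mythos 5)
Your proof is correct and follows essentially the same route as the paper: you identify $H^0(\pi)|_{G_r}$ with $ind^{G_r}_{P_r}H^0_{ev}(\pi)$, use faithful exactness of $P_r$ in $G_r$ and Frobenius reciprocity to reduce to $Ext^1_{P_r}(M,H^0_{ev}(\pi))=0$ via Lemma \ref{asinjanforsteinberg}, and then exploit the Lyndon--Hochschild--Serre spectral sequence for $U\unlhd P_r$ together with Lemma \ref{simplelemma} and the injectivity of the even Steinberg module over $G_{ev,r}$ (Jantzen II.10.2). The only (immaterial) difference is bookkeeping: the paper kills all rows $E_2^{n,m}$ with $n\geq 1$ to get $Ext^1_{P_r}(M,M')\simeq Ext^1_U(M,M')^{P_r/U}$, while you argue via the five-term exact sequence using only the injectivity of $Hom_U(M,H^0_{ev}(\pi))$.
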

\begin{proof}
Denote the weight $(p^r-1)\rho_0+\rho_{s, t}$ by $\pi$, one more time. Observe that 
$H^0(\pi)|_{G_r}\simeq Z'_r(\pi)\simeq ind^{G_r}_{P_r} H^0_{ev}(\pi)$, where $M'=H^0_{ev}(\pi)|_{G_{ev, r}}$ is regarded as a $P_r$-supermodule via the epimorphism $P_r\to G_{ev, r}$.

As it has been already observed, $P_r$ is a faithfully exact subsupergroup of $G_r$ and the functor $ind^{G_r}_{P_r}$ is faithfully exact. 
Thus for every $G_r$-supermodule $M$ there is a natural isomorphism
$$
Ext^1_{G_r} (M, H^0(\pi))\simeq 
Ext^1_{P_r}(M, M').
$$
The functor $Hom_{P_r}(M, ?)$ is isomorphic to $Hom_U(M, ?)^{P_r/U}$. Since the functor $Hom_U(M, ?)$ from $P_r-\mathsf{SMod}$ to
$P_r/U-\mathsf{SMod}$ is left exact and takes injectives to injectives, there is a spectral sequence
$$E^{n, m}_2 =H^n (P_r/U, Ext^m_U(M, N))\Rightarrow Ext^{n+m}_{P_r}(M, N), N\in P_r-\mathsf{SMod}.
$$
Lemma \ref{simplelemma} combined with Proposition II.10.2 of \cite{jan} imply that
$Ext^m_U(M, M')\simeq Ext^m_U(M, K)\otimes M'$ is an injective $G_{ev, r}=P_r/U$-supermodule. In particular, $E^{n, m}_2 =0$ for every
$n\geq 1, m\geq 0$. Using the five-term exact sequence we obtain
$$Ext^1_{P_r} (M, M')\simeq Ext^1_U(M, M')^{P_r/U}.$$
Lemma \ref{asinjanforsteinberg} concludes the proof.
\end{proof}
Let $I_{ev}(\lambda)$ be the injective envelope of an irreducible  $G_{res}$-module of the highest weight $\lambda$.
We have already seen that $I_{ev}(\lambda)$ is also an injective envelope of the purely even irreducible $G_{ev}$-supermodule of the highest weight $\lambda$.
Denote by $I(\lambda)$ the injective envelope of the simple $G$-module of the highest weight $\lambda$.
\begin{tr}\label{tensorproductisinjective}
If $H^0((p^r-1)\rho_0+\rho_{s, t})$ is a Steinberg supermodule, then 
$$H^0((p^r-1)\rho_0+\rho_{s, t})\otimes I_{ev}(\lambda)^{[r]}\simeq I((p^r-1)\rho_0+\rho_{s, t}+p^r\lambda).$$
\end{tr}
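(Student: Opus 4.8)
The plan is to combine the two already-available facts about the object $T = H^0(\pi)\otimes I_{ev}(\lambda)^{[r]}$, where $\pi = (p^r-1)\rho_0+\rho_{s,t}$: first, that $T$ is indecomposable with a unique highest weight $\pi+p^r\lambda$, and second, that $T$ is injective as a $G$-supermodule. Once both are established, $T$ must be the injective envelope of the simple supermodule of highest weight $\pi+p^r\lambda$, i.e.\ $T\simeq I(\pi+p^r\lambda)$, finishing the proof. Indecomposability and the identification of the highest weight should go exactly as in Proposition \ref{asdonkin}: since $H^0(\pi)=L(\pi)=V(\pi)$ for a Steinberg weight, we get $End_G(T)\simeq End_{G_r}(T)^G$, and because $G_r$ acts trivially on $I_{ev}(\lambda)^{[r]}$ while $End_{G_r}(L_r(\pi))=K\,id$, this collapses to $End_G(I_{ev}(\lambda)^{[r]})\simeq End_{G_{res}}(I_{ev}(\lambda))$ by Lemma \ref{endomorphismring}, which is local; hence $T$ is indecomposable. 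The highest weight of $T$ is $\pi+p^r\lambda$ by weight considerations, using that $I_{ev}(\lambda)$ has highest weight $\lambda$ (and its twist has highest weight $p^r\lambda$) together with the description of the weights of $H^0(\pi)$.

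The hard part is injectivity of $T$ as a $G$-supermodule. By Theorem \ref{goodfiltr}'s circle of ideas, or more directly by the structure of injectives, one wants $Ext^1_G(N,T)=0$ for all $N$; it suffices to check $Ext^1_G(V(\mu),T)=0$ for every dominant $\mu$ (or to verify that $T|_{U^{opp}}$ is injective and $T^{U^{opp}}$ is $G_{ev}$-injective, then apply Remark \ref{aconsequence}). I would argue via Frobenius kernels. Using $H^0(\pi)|_{G_r}\simeq Z'_r(\pi)\simeq ind^{G_r}_{P_r} H^0_{ev}(\pi)$ and faithful exactness of $ind^{G_r}_{P_r}$ (as in Lemma \ref{strangelemma}), reduce an $Ext^1$ over $G$ to an $Ext^1$ over $G_r$, then over $P_r$, and finally over $U$. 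The key input is Lemma \ref{strangelemma}: $Ext^1_U(M,H^0_{ev}(\pi))^{G_{ev,r}}=0$ for every $G_r$-supermodule $M$, which comes from the fact that $H^0(\pi)$ is injective as a $G_r$-supermodule (Lemma \ref{asinjanforsteinberg}). Tensoring with the Frobenius twist $I_{ev}(\lambda)^{[r]}$, on which $G_r$ acts trivially, and using Lemma \ref{simplelemma} to pull the twist out of the $Ext$, one gets that the relevant $U$-cohomology vanishes, so $T|_{U^{opp}}$ is injective.

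It then remains to identify $T^{U^{opp}}$ and show it is $G_{ev}$-injective. Here I expect $H^0(\pi)^{U^{opp}}\simeq H^0_{ev}(\pi)$ (this is the elementary observation recorded before Theorem \ref{goodfiltr}, using $H^0(\pi)\simeq ind^G_P H^0_{ev}(\pi)$), and since taking $U^{opp}$-invariants commutes with the even Frobenius twist (which $U^{opp}$ acts trivially on, being infinitesimal while the twist is inflated through $G^{(r)}_{ev}$), we obtain
$$
T^{U^{opp}} \simeq H^0_{ev}(\pi)\otimes I_{ev}(\lambda)^{[r]}.
$$
By the classical Donkin result quoted in the introduction (\cite{don3}, II.10.5.2(1) of \cite{jan}), applied to $G_{res}$ with the Steinberg-type weight $\pi=(p^r-1)\rho_0+\rho_{s,t}$, this is $I_{ev}(\pi+p^r\lambda)$, hence $G_{ev}$-injective. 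Feeding injectivity of $T|_{U^{opp}}$ and $G_{ev}$-injectivity of $T^{U^{opp}}$ into Remark \ref{aconsequence} gives $Ext^n_G(V(\mu),T)=0$ for all $n\ge 1$ and all $\mu$, so $T$ has a good filtration and in fact is injective; combined with indecomposability and the highest weight $\pi+p^r\lambda$, this yields $T\simeq I(\pi+p^r\lambda)$. The main obstacle is being careful with the spectral-sequence bookkeeping in the reduction $G\rightsquigarrow G_r\rightsquigarrow P_r\rightsquigarrow U$ and checking that the twist factor genuinely survives all the identifications; the rest is an assembly of lemmas already in place.
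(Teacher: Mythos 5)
There is a genuine gap, and it sits exactly at the step you yourself call the hard part: injectivity of $T=H^0(\pi)\otimes I_{ev}(\lambda)^{[r]}$ over $G$. Your two proposed reductions do not reach injectivity. Checking $Ext^1_G(V(\mu),T)=0$ for all dominant $\mu$ only shows that $T$ has a (decreasing) good filtration, which is strictly weaker than injectivity. Likewise, the alternative route through Remark \ref{aconsequence} --- verifying that $T|_{U^{opp}}$ is injective and $T^{U^{opp}}\simeq I_{ev}(\pi+p^r\lambda)$ is $G_{ev}$-injective --- only yields $Ext^{n}_G(Dist(G)\otimes_{Dist(P^{opp})}N,\,T)=0$ for $n\geq 1$ and all $G_{ev}$-supermodules $N$, i.e.\ vanishing against the family of modules induced from $P^{opp}$; the simple supermodules $L(\mu)$ are not of this induced form, so the final assertion ``so $T$ has a good filtration and in fact is injective'' is exactly the claim that remains unproved. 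Injectivity means $Ext^1_G(L(\mu),T)=0$ for every simple $L(\mu)$, and nothing in your argument tests $T$ against the simples.

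The paper closes precisely this gap by a different reduction: since $G_{ev}=P_{ev}$, the functor $ind^G_P$ is exact and $T\simeq ind^G_P\bigl(H^0_{ev}(\pi)\otimes I_{ev}(\lambda)^{[r]}\bigr)$, so generalized Frobenius reciprocity gives $Ext^1_G(L(\mu),T)\simeq Ext^1_P\bigl(L(\mu), H^0_{ev}(\pi)\otimes I_{ev}(\lambda)^{[r]}\bigr)$ with the \emph{simple} module in the first variable. Then the spectral sequence for $U\unlhd P$ together with Lemma \ref{simplelemma} identifies this with $\bigl(Ext^1_U(L(\mu),K)\otimes H^0_{ev}(\pi)\otimes I_{ev}(\lambda)^{[r]}\bigr)^{G_{ev}}$, which embeds into the $G_{ev,r}$-invariants, and these vanish by Lemma \ref{strangelemma}, i.e.\ by the injectivity of the Steinberg supermodule over the Frobenius kernel; note that your reduction chain ``over $G$, then over $G_r$, then over $P_r$'' conflates the proof of Lemma \ref{strangelemma} (which concerns $G_r$-supermodules) with the proof of the theorem itself (which stays with $G$, $P$ and $U$, and only passes from $G_{ev}$- to $G_{ev,r}$-invariants at the last moment). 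Your treatment of indecomposability, of the identification $T^{U^{opp}}\simeq H^0_{ev}(\pi)\otimes I_{ev}(\lambda)^{[r]}\simeq I_{ev}(\pi+p^r\lambda)$, and of the final identification of the socle via the subsupermodule $H^0(\pi+p^r\lambda)$ are all consistent with the paper; but as written, the injectivity step fails and must be replaced by the direct computation of $Ext^1_G(L(\mu),T)$ described above.
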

\begin{proof}
We have again $G_{ev}=P_{ev}$, which implies that $G/P$ is an affine superscheme and the functor $ind^G_P ?$ is (faithfully) exact.
For every $\mu\in X(T)^+$ we have 
$$
Ext^1_G (L(\mu), H^0(\pi)\otimes I_{ev}(\lambda)^{[r]})\simeq 
Ext^1_P(L(\mu), H^0_{ev}(\pi)\otimes I_{ev}(\lambda)^{[r]}),
$$
where $\pi=(p^r-1)\rho_0+\rho_{s, t}$. Since $$H^0_{ev}(\pi)\otimes I_{ev}(\lambda)^{[r]}\simeq
I_{ev}(\pi+p^r\lambda)$$ (see  II.10.5.2 (1) of \cite{jan}), arguing as in Lemma \ref{strangelemma} we obtain
$$Ext^1_P(L(\mu), H^0_{ev}(\pi)\otimes I_{ev}(\lambda)^{[r]})\simeq Ext^1_U(L(\mu), H^0_{ev}(\pi)\otimes I_{ev}(\lambda)^{[r]}))^{P/U}\simeq$$
$$
(Ext^1_U(L(\mu), K)\otimes H^0_{ev}(\pi)\otimes I_{ev}(\lambda)^{[r]} )^{G_{ev}}\subseteq (Ext^1_U(L(\mu), K)\otimes H^0_{ev}(\pi)\otimes I_{ev}(\lambda)^{[r]} )^{G_{ev, r}}
$$
$$
=(Ext^1_U(L(\mu), H^0_{ev}(\pi))^{G_{ev, r}})^{\bigoplus\dim I_{ev}(\lambda)^{[r]}}=0.
$$
Therefore $H^0(\pi)\otimes I_{ev}(\lambda)^{[r]}$ is an injective $G$-supermodule.
As in the proof of Proposition \ref{asdonkin}, we obtain that $H^0(\pi)\otimes I_{ev}(\lambda)^{[r]}$ is indecomposable. Since this supermodule
contains a subsupermodule $H^0(\pi+p^r\lambda)$, its socle coincides with $L(\pi+p^r\lambda)$.
\end{proof}
\begin{rem}\label{aboutblocks}
Let $(p^r-1)\rho_0 +\rho_{s, t}$ be a highest weight of some Steinberg supermodule. Define a map $\theta_r : X(T)^+\to X(T)^+$ by
$\theta_r(\lambda)= (p^r-1)\rho_0 +\rho_{s, t}+p^r\lambda$. 
Theorem \ref{tensorproductisinjective} implies that if $\mathfrak{B}$ is a block of $GL(m|n)$-supermodules, then $\theta_r(\mathfrak{B})$ is a block as well. The proof can be modified from the proof of Corollary 2.3, \cite{don3}. 
\end{rem}

\section{Examples}

Consider two $G$-supermodules $M$ and $M'$ over an algebraic supergroup $G$. The right  $K[G]$-supecormodule structure on $M\otimes M'$ is given by
$\tau_{M\otimes M'}(m\otimes m')= \sum (-1)^{|f_2||m_1|}m_1\otimes m'_1\otimes f_2 g_2$, where
$\tau_M(m)=\sum m_1\otimes f_2$ and $\tau_{M'}(m')=\sum m'_1\otimes g_2$. The following lemma is evident.
\begin{lm}\label{distaction}
$Dist(G)$ acts on $M\otimes M'$ by the rule
$$\phi\cdot (m\otimes m')=\sum (-1)^{|f_2||m_1|+|\phi|(|m_1|+|m'_1|)+|\phi_2||f_2|} m_1\otimes m'_1\phi_1(f_2)\phi_2(g_2). 
$$
In particular, if $\phi$ is a primitive element, then 
$$\phi\cdot (m\otimes m')=\phi\cdot m\otimes m' + (-1)^{|\phi||m|}m\otimes \phi\cdot m'.$$ 
\end{lm}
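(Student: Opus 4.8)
The plan is to get the first formula by a direct substitution, and then to read off the ``in particular'' assertion by specializing $\phi$ to a primitive element.

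First I would recall how $Dist(G)$ acts on a right $K[G]$-supercomodule: for any $G$-supermodule $N$ with coaction $\nu_N(x)=\sum x_1\otimes h_2$, a distribution $\phi\in Dist(G)\subseteq K[G]^*$ acts by $\phi\cdot x=\sum(-1)^{|\phi||x_1|}\phi(h_2)x_1$ (this is the action underlying Lemma~\ref{equivalence}, applied with $R=K[G]$). Applying it to $N=M\otimes M'$ with the coaction displayed at the start of the section, $\tau_{M\otimes M'}(m\otimes m')=\sum(-1)^{|f_2||m_1|}(m_1\otimes m'_1)\otimes f_2g_2$, gives
$$\phi\cdot(m\otimes m')=\sum(-1)^{|f_2||m_1|+|\phi|(|m_1|+|m'_1|)}\,\phi(f_2g_2)\,(m_1\otimes m'_1).$$
Next I would expand $\phi(f_2g_2)$ via the comultiplication of $Dist(G)$ from Section~3: writing $\Delta_{Dist(G)}(\phi)=\sum\phi_1\otimes\phi_2$, one has $\phi(f_2g_2)=\sum(-1)^{|\phi_2||f_2|}\phi_1(f_2)\phi_2(g_2)$. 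Substituting this, and noting that $\phi_1(f_2),\phi_2(g_2)\in K$ are even scalars that pass through the tensor with no further sign, produces exactly the stated formula; for each fixed $(m,m')$ all the sums are finite, so there is no convergence issue.

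For the second assertion I would take $\phi$ primitive, so $\Delta_{Dist(G)}(\phi)=\phi\otimes 1+1\otimes\phi$ with $1=1_{Dist(G)}=\epsilon_{K[G]}$, and split the general formula into the two corresponding summands. In the summand $(\phi_1,\phi_2)=(\phi,1)$ the factor $\phi_2(g_2)=\epsilon(g_2)$ annihilates the terms with $g_2$ odd and, through the counit axiom $\sum m'_1\epsilon(g_2)=m'$, collapses the $M'$-coaction; a short parity count (using $|m_1|+|f_2|=|m|$, and that $\phi(f_2)\neq 0$ forces $|f_2|=|\phi|$) shows the residual sign reduces this summand to $(\phi\cdot m)\otimes m'$, where $\phi\cdot m=\sum(-1)^{|\phi||m_1|}\phi(f_2)m_1$. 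Symmetrically, the summand $(\phi_1,\phi_2)=(1,\phi)$ collapses the $M$-coaction via $\sum m_1\epsilon(f_2)=m$ and reduces to $(-1)^{|\phi||m|}\,m\otimes(\phi\cdot m')$, which together give the claimed Leibniz-type identity.

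Since the lemma is flagged as evident there is no conceptual obstacle here; the only delicate point is the Koszul sign bookkeeping, in particular keeping track of the parities of the hidden summation variables $f_2,g_2$ against $|\phi|$, $|m|$ and $|m'|$ in the two counit contractions of the primitive case.
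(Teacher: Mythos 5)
Your strategy is the intended one --- the paper offers no argument beyond ``evident'' --- and your derivation of the first display, by combining the comodule action $\phi\cdot x=\sum(-1)^{|\phi||x_1|}\phi(h_2)x_1$ with $\phi(fg)=\sum(-1)^{|\phi_2||f|}\phi_1(f)\phi_2(g)$, is exactly the right mechanism. The genuine gap is at the step you explicitly wave through: the ``short parity count'' for the summand $(\phi_1,\phi_2)=(\phi,1)$ does \emph{not} come out to $+1$ under the conventions you adopted (copied from the paper). Carry it out: in that summand $\phi(f_2)\neq 0$ forces $|f_2|=|\phi|$, hence $|m_1|=|m|+|\phi|$, and $\epsilon(g_2)\neq 0$ forces $|m'_1|=|m'|$; the exponent $|f_2||m_1|+|\phi|(|m_1|+|m'_1|)$ then reduces to $|\phi||m'|$, while the surviving terms of $\phi\cdot m=\sum(-1)^{|\phi||m_1|}\phi(f_2)m_1$ give $\sum\phi(f_2)m_1=(-1)^{|\phi||m|+|\phi|}\,\phi\cdot m$. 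So this summand equals $(-1)^{|\phi|(|m|+|m'|+1)}\,\phi\cdot m\otimes m'$ rather than $\phi\cdot m\otimes m'$. For odd $\phi$ --- and the lemma is used immediately afterwards for the odd primitive elements $e_{ij}$, $1\leq i\leq m<j\leq m+n$ --- this is a real sign discrepancy, so as written your proof of the ``in particular'' claim does not close.

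The source of the trouble is not your method but the Koszul sign in the tensor coaction: the correct formula is $\tau_{M\otimes M'}(m\otimes m')=\sum(-1)^{|f_2||m'_1|}\,m_1\otimes m'_1\otimes f_2g_2$ (the sign comes from moving $f_2$ past $m'_1$), whereas the display you quote, and hence the exponent $|f_2||m_1|$ in the lemma's first formula, carries a typo. With $|f_2||m'_1|$ in place, the same bookkeeping gives the exponent $|\phi||m|+|\phi|$ in the $(\phi,1)$ summand, which exactly cancels the sign hidden in $\phi\cdot m$, and the two summands become $\phi\cdot m\otimes m'$ and $(-1)^{|\phi||m|}\,m\otimes\phi\cdot m'$ as claimed (your second summand is unaffected, since there $|f_2|=0$ and the two conventions coincide). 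So your proof is repaired by actually performing the parity count and correcting the coaction sign; as submitted, the decisive computation is asserted rather than done, and with the sign you state it is false.
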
 
As above, let $G=GL(m|n)$. From now on we assume that $m, n\geq 1$. 

The superalgebra $Dist(U^{opp})$ is generated by the primitive elements (matrix units) $e_{ij}$ for $1\leq i\leq m< j\leq m+n$. 
They act on basis vectors $w_k$ of $W$ as $e_{ij}\cdot w_k=\delta_{jk}w_i$ for $1\leq k\leq m+n$.

Consider the $k$-th (super) exterior power $\Lambda^k(W)=\bigoplus_{0\leq i\leq \min\{m, k\}}\Lambda^i(W_0)\otimes S^{k-i}(W_1)$.
Basis vectors of $\Lambda^k(W)$ are of the form 
$$
w_{i_1}\ldots w_{i_s} w_{m+1}^{\beta_1}\ldots w_{m+n}^{\beta_n}, \mbox{ where } 1\leq i_1 <\ldots < i_s\leq m \mbox{ and } \beta_1, \ldots , \beta_n\geq 0.
$$
We will denote $w_{i_1}\ldots w_{i_s}$ by $w_+^I$, where $I=\{i_1, \ldots , i_s\}$, and $w_{m+1}^{\beta_1}\ldots w_{m+n}^{\beta_n}$ by $w_-^{\beta}$, where 
$\beta=(\beta_1 ,\ldots , \beta_n)$. Also denote $\sum_{1\leq t\leq n}\beta_t$ by $|\beta|$.
\begin{lm}\label{exterior} There is a decomposition
$$\Lambda^k(W)^{U^{opp}}=(\bigoplus_{0\leq i\leq \min\{k,m-1 \}, p|(k-i)}\Lambda^i(W_0)\otimes S^{\frac{k-i}{p}}(W_1^p))\bigoplus \Lambda^m(W_0)\otimes S^{k-m}(W_1).$$ 
Here $S^{\frac{k-i}{p}}(W_1^p)$ is a subspace of $S^k(W_1)$ spanned by the elements 
$w_-^{p\gamma}$, where $|\gamma|=\frac{k-i}{p}$. The last summand appears only if $k\geq m$.
\end{lm}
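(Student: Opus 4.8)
The plan is to compute the $U^{opp}$-invariants directly from the primitive generators of $Dist(U^{opp})$. Since $U^{opp}$ is infinitesimal, for every $G$-supermodule $N$ one has $N^{U^{opp}}=\{v\in N\mid Dist(U^{opp})^{+}v=0\}$, and because $Dist(U^{opp})$ is generated as an algebra by the primitive matrix units $e_{ij}$ ($1\le i\le m<j\le m+n$), every element of $Dist(U^{opp})^{+}$ lies in $\sum_{i,j}Dist(U^{opp})\,e_{ij}$; hence $N^{U^{opp}}=\bigcap_{i,j}\ker(e_{ij}\colon N\to N)$. Thus I must identify the common kernel of the operators $e_{ij}$ on $\Lambda^{k}(W)$.

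Next I would write down these operators explicitly. By Lemma \ref{distaction}, each $e_{i,m+t}$ acts on the total super-exterior algebra $\bigoplus_{k}\Lambda^{k}(W)=\Lambda^{\bullet}(W_{0})\otimes S^{\bullet}(W_{1})$ as the odd super-derivation extending $w_{l}\mapsto\delta_{l,m+t}w_{i}$, and on a basis vector this yields $e_{i,m+t}(w_{+}^{I}w_{-}^{\beta})=\pm\,\beta_{t}\,(w_{i}\wedge w_{+}^{I})\,w_{-}^{\beta-\epsilon_{t}}$, where the sign is a unit and $w_{i}\wedge w_{+}^{I}=0$ when $i\in I$. In particular $e_{i,m+t}$ raises the exterior $W_{0}$-degree by one, lowers the $S(W_{1})$-degree by one, and on the polynomial algebra $S^{\bullet}(W_{1})=K[w_{m+1},\dots,w_{m+n}]$ it acts (up to sign and up to wedging $w_{i}$) as the ordinary partial derivative $\partial_{m+t}$, with honest coefficient $\beta_{t}$ — this is because in $\Lambda^{\bullet}(W)$ the even generator $w_{i}$ anticommutes with the odd generators $w_{m+s}$, which converts the super-Leibniz sum into the usual one.

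Consequently $e_{i,m+t}$ maps the summand $\Lambda^{i'}(W_{0})\otimes S^{k-i'}(W_{1})$ into $\Lambda^{i'+1}(W_{0})\otimes S^{k-i'-1}(W_{1})$, and as $i'$ varies these target summands are pairwise distinct, so an element of $\Lambda^{k}(W)$ is killed by all $e_{ij}$ if and only if each of its homogeneous components of exterior $W_{0}$-degree $i'$ is. For $i'=m$ the target $\Lambda^{m+1}(W_{0})$ is zero, so the whole summand $\Lambda^{m}(W_{0})\otimes S^{k-m}(W_{1})$ is invariant — this is the last summand, which occurs precisely when $k\ge m$. For $i'<m$, write such a component as $v=\sum_{|I|=i'}w_{+}^{I}\otimes f_{I}$ with $f_{I}\in S^{k-i'}(W_{1})$; fixing $i''\le m$ and $t$, the formula above gives $e_{i'',m+t}(v)=\sum_{|I|=i',\,i''\notin I}\pm\,w_{+}^{I\cup\{i''\}}\otimes\partial_{m+t}(f_{I})$, and since $I\mapsto I\cup\{i''\}$ is injective on $\{\,I: i''\notin I\,\}$, this vanishes if and only if $\partial_{m+t}(f_{I})=0$ for every $I$ omitting $i''$. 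As $i'<m$, every $I$ of size $i'$ omits some index, so ranging over $i''$ and $t$ forces $\partial_{m+t}(f_{I})=0$ for all $I$ and all $t$, i.e. each $f_{I}$ lies in $\bigcap_{t}\ker\partial_{m+t}=K[w_{m+1}^{p},\dots,w_{m+n}^{p}]$. Hence the degree-$i'$ invariants are exactly $\Lambda^{i'}(W_{0})\otimes S^{\frac{k-i'}{p}}(W_{1}^{p})$, which is nonzero only for $p\mid(k-i')$ and $0\le i'\le\min\{k,m-1\}$; one checks easily that each summand is $GL(W_0)\times GL(W_1)$-stable, and summing over $i'$ gives the claimed decomposition.

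The only genuinely delicate point is the sign bookkeeping in the super-Leibniz rule of Lemma \ref{distaction}: one must verify that the net coefficient of $e_{i,m+t}$ on $w_{m+t}^{\beta_{t}}$ equals $\beta_{t}$ rather than something smaller, so that $\bigcap_{t}\ker\partial_{m+t}$ really is $K[w_{m+1}^{p},\dots,w_{m+n}^{p}]$; this is where the sign conventions of the super-exterior algebra are used, and all remaining signs are units and hence irrelevant for kernels. Beyond this I expect no obstacle.
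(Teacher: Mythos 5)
Your proposal is correct and follows essentially the same route as the paper: the paper's proof consists precisely of the formula $e_{ij}\cdot(w_+^I w_-^{\beta})=\beta_{j-m}\,w_+^I w_i\,w_{m+1}^{\beta_1}\cdots w_j^{\beta_{j-m}-1}\cdots w_{m+n}^{\beta_n}$ for the primitive generators of $Dist(U^{opp})$, from which the decomposition is read off, and your argument just spells out the details the paper leaves implicit (invariants as the common kernel of the $e_{ij}$, the grading by exterior $W_0$-degree, and $\bigcap_t\ker\partial_{m+t}=K[w_{m+1}^p,\ldots,w_{m+n}^p]$ in characteristic $p$). Your sign analysis, including the observation that the anticommutation of $w_i$ with the odd generators makes the net coefficient equal to $\beta_t$, matches the paper's formula.
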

\begin{proof}
If $1\leq i\leq m < j\leq m+n$, then
$$e_{ij}\cdot (w_+^I w_-^{\beta})=\beta_{j-m} w_+^I w_i w_{m+1}^{\beta_1}\ldots w_j^{\beta_{j-m}-1}\ldots w_{m+n}^{\beta_n}.
$$
The decomposition in the statement of the lemma follows.
\end{proof}

\begin{cor}\label{whengood}
The $G_{ev}$-supermodule $\Lambda^k(W)^{U^{opp}}$ has a good filtration if and only if $n=1$ or $k=ps + r$, where
$s\geq 0$ and $m\leq r<p$.
\end{cor}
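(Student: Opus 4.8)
The plan is to compute the good-filtration condition for the $G_{ev}$-supermodule $\Lambda^k(W)^{U^{opp}}$ by analyzing each summand in the decomposition from Lemma~\ref{exterior}. Recall that $G_{ev}\simeq GL(m)\times GL(n)$, so a $G_{ev}$-supermodule has a good filtration precisely when it has a good filtration as a module over each factor; and tensor products of modules with good filtrations over $GL(m)$ and $GL(n)$ respectively give a module with a good filtration over the product (Donkin's theorem, since $GL$ is of type $A$; cf.\ II.4.21 of \cite{jan}). So I would first record that each summand $\Lambda^i(W_0)\otimes S^{(k-i)/p}(W_1^p)$ factors as $\Lambda^i(W_0)$, an irreducible (hence good-filtered) $GL(m)$-module, tensored with $S^{(k-i)/p}(W_1^p)$, which as a $GL(n)$-module is the $r$-th Frobenius twist of $S^{(k-i)/p}(W_1)$ only when $p\mid (k-i)$ and we are in the relevant twist --- but in fact over $GL(W_1)=GL(n)$ one must be careful: $S^\ell(W_1^p)$ is the image of $S^\ell$ under Frobenius, which has a good filtration iff $S^\ell(W_1)$ does. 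Since all symmetric powers $S^\ell(W_1)$ are irreducible (again type $A$, $\lambda = \ell\epsilon_1$), every such summand has a good filtration on its own.

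The next, and genuinely decisive, point is that having a good filtration is \emph{not} preserved under arbitrary direct sums in a way that would let us conclude automatically --- actually it is: a finite direct sum has a good filtration iff each summand does. So the real question reduces to: \emph{when does the decomposition in Lemma~\ref{exterior} consist entirely of good-filtered pieces, and is there any piece that fails?} The candidate troublemaker is the last summand $\Lambda^m(W_0)\otimes S^{k-m}(W_1)$, which appears only when $k\ge m$; but $\Lambda^m(W_0)=\det$ is one-dimensional and $S^{k-m}(W_1)$ is irreducible over $GL(n)$, so this piece is fine too. Hence \emph{every} individual summand has a good filtration, and I would initially expect the corollary to always hold --- which contradicts the stated restriction on $k$. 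The resolution must be that the $G_{ev}$-module structure on $\Lambda^k(W)^{U^{opp}}$ does \emph{not} split as the internal direct sum of these pieces as $G_{ev}$-submodules: the spaces $S^{(k-i)/p}(W_1^p)\subseteq S^k(W_1)$ are $GL(n)$-submodules, but the issue is the $GL(m)$-action mixing different $i$'s, or more precisely that $\Lambda^k(W)^{U^{opp}}$ as a whole need not be semisimple over $GL(m)\times GL(n)$ even though the listed pieces are its composition factors.

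So the corrected plan is: identify $\Lambda^k(W)^{U^{opp}}$ as a $G_{ev}$-module by \emph{highest weights of composition factors} and then apply the criterion for good filtrations via $\mathrm{Ext}^1$-vanishing against Weyl modules (Theorem~\ref{goodfiltr}'s companion for $G_{ev}$, or directly II.4.16 of \cite{jan}). First I would list the highest weights occurring: from the $i$-th piece one gets the $GL(m)\times GL(n)$ weight $(\underbrace{1,\dots,1}_{i},0,\dots\mid \text{partition of } k-i \text{ into parts divisible by }p)$, whose $GL(n)$-highest weight is $((k-i)\epsilon_1)$, i.e.\ weight $(1^i\mid (k-i,0,\dots,0))$; and from the last piece the weight $(1^m\mid (k-m,0,\dots,0))$. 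Then I would check for $\mathrm{Ext}^1$ between consecutive factors, which over type $A$ is controlled by linkage; the pairs $(1^i\mid k-i)$ and $(1^{i'}\mid k-i')$ with $i\ne i'$ can have a non-split extension exactly when the $GL(m)$-parts are linked, forcing the module to fail the good-filtration test unless there is essentially one factor (giving $n=1$, since then $S^\bullet(W_1)$ is one-dimensional in each degree and the $i$-sum collapses appropriately) or unless the range of admissible $i$ degenerates to a single value, i.e.\ the only $i$ with $0\le i\le\min\{k,m-1\}$ and $p\mid(k-i)$ together with the possible $i=m$ term collapse --- writing $k=ps+r$ with $0\le r<p$, the divisibility $p\mid(k-i)$ forces $i\equiv r\pmod p$, so admissible $i$ in $[0,m-1]$ are $r, r+p, r+2p,\dots$; there is a unique such $i$ (namely $i=r$) exactly when $r+p\ge m$, i.e.\ $m-p\le r$, combined with $r<p$; and one still has to kill or absorb the $i=m$ term, which forces $m\le r$. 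I would therefore arrive at the condition ``$n=1$, or $m\le r<p$,'' matching the statement, and the main obstacle --- the part I expect to require the most care --- is verifying that in the excluded cases the relevant $\mathrm{Ext}^1$ between two of the listed composition factors is genuinely nonzero (so that no good filtration exists), which I would do by exhibiting the non-split extension explicitly inside $\Lambda^k(W)^{U^{opp}}$ using the residual $GL(m)$-action on the basis vectors $w_+^I w_-^{p\gamma}$, or by a weight/block computation showing the two factors lie in the same $GL(m)$-block and the module is indecomposable over that factor.
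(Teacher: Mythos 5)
Your opening claim---that every summand in the decomposition of Lemma~\ref{exterior} has a good filtration---is false, and this is exactly where your argument breaks. First, $S^\ell(W_1)$ is not irreducible for $GL(n)$ in characteristic $p$ (it is the induced module $H^0((\ell,0,\dots,0))$, reducible as soon as $\ell\geq p$ and $n\geq 2$); but the decisive point is that the summand involves $S^{(k-i)/p}(W_1^p)$, which as a $GL(n)$-module is the Frobenius twist $S^{(k-i)/p}(W_1)^{[1]}$, and your assertion that this ``has a good filtration iff $S^\ell(W_1)$ does'' is wrong: for $n\geq 2$ and $\ell\geq 1$ the twist never has a good filtration. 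Its character $\sum_{|\gamma|=\ell}x^{p\gamma}$ is not a nonnegative integral combination of Schur polynomials (already for $n=2$, $\ell=1$ one has $x_1^p+x_2^p=s_{(p)}-s_{(p-1,1)}$; in general the coefficient of $x_1^{p\ell-1}x_2$ would have to be positive), whereas any module with a good filtration has such a character. These twisted summands are precisely the source of the restriction on $k$ in the corollary.

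Because of this false premise, your proposed ``resolution'' points in the wrong direction. The decomposition in Lemma~\ref{exterior} \emph{is} a decomposition of $G_{ev}\simeq GL(m)\times GL(n)$-modules: each bidegree component $\Lambda^i(W_0)\otimes S^{k-i}(W_1)$ is $G_{ev}$-stable, the odd operators $e_{ij}\in Dist(U^{opp})$ are homogeneous for this grading (so the fixed-point space splits along it), and $S^\ell(W_1^p)$ is a $GL(n)$-submodule of $S^{p\ell}(W_1)$ because $(\sum a_jw_j)^p=\sum a_j^pw_j^p$. Hence there are no hidden extensions between the pieces to exhibit, and since direct summands inherit good filtrations (the criterion $Ext^1(V_{ev}(\mu),-)=0$ passes to summands), the question is decided summand by summand: for $n=1$ every piece is a direct sum of one-dimensional induced modules; for $n\geq 2$ the pieces $\Lambda^i(W_0)\otimes S^{(k-i)/p}(W_1^p)$ with $i\leq m-1$ and $k-i>0$ are bad, while $\Lambda^k(W_0)$ and $\Lambda^m(W_0)\otimes S^{k-m}(W_1)$ are good (external tensor products of induced modules over the two factors). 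Writing $k=ps+r$ with $0\leq r<p$, a bad piece occurs exactly when some $i\equiv r\pmod p$ satisfies $0\leq i\leq\min\{k-p,m-1\}$, i.e.\ exactly when $r\leq m-1$ and $s\geq 1$; this gives the stated condition $m\leq r<p$ (the degenerate case $k<\min\{m,p\}$, where only $\Lambda^k(W_0)$ survives, is the one boundary point the statement glosses over). In particular your plan of producing a non-split extension between the listed factors inside $\Lambda^k(W)^{U^{opp}}$ cannot succeed, since the module genuinely splits, and your heuristic that a single surviving factor is automatically harmless is also incorrect: a single twisted summand with $(k-i)/p\geq 1$ already destroys the good filtration.
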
  
\begin{lm}\label{n=1}
If $n=1$, then $\Lambda^k(W)$ has a good filtration if and only if $k=ps+r$,
where $s\geq 0$ and $m\leq r < p$.
In this case $\Lambda^k(W)\simeq H^0((1^m|k-m)^a)=L((1^m|k-m)^a)=V((1^m|k-m)^a)$, where
$a=k-m \pmod 2$, is a tilting supermodule.
\end{lm}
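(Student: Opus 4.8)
The plan is to combine Corollary~\ref{whengood} with Theorem~\ref{goodfiltr}. Since $n=1$, Corollary~\ref{whengood} tells us that the $G_{ev}$-supermodule $\Lambda^{k}(W)^{U^{opp}}$ always has a good filtration, so Theorem~\ref{goodfiltr} reduces the problem to deciding when $H^{1}(U^{opp},\Lambda^{k}(W))=0$. As $U^{opp}$ is unipotent, Lemma~\ref{injandproj2} turns this into the requirement that $\Lambda^{k}(W)|_{U^{opp}}$ be an injective $U^{opp}$-supermodule. For $n=1$ one has $U^{opp}\simeq\mathbb{A}^{0|m}$, so $\dim K[U^{opp}]=2^{m}$, and by Lemma~\ref{injandproj} the only indecomposable injective $U^{opp}$-supermodules are $K[U^{opp}]$ and $\Pi K[U^{opp}]$, each with one-dimensional space of invariants. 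Consequently every finite-dimensional $U^{opp}$-supermodule $M$ embeds into its injective hull, which has dimension $2^{m}\dim M^{U^{opp}}$, with equality $\dim M=2^{m}\dim M^{U^{opp}}$ exactly when $M$ itself is injective. Hence $\Lambda^{k}(W)$ has a good filtration if and only if $\dim\Lambda^{k}(W)=2^{m}\dim\Lambda^{k}(W)^{U^{opp}}$.

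I would then compute the two dimensions for $n=1$. From the basis one reads off $\dim\Lambda^{k}(W)=\sum_{0\le i\le\min\{m,k\}}\binom{m}{i}$, which equals $2^{m}$ when $k\ge m$ and satisfies $0<\dim\Lambda^{k}(W)<2^{m}$ when $k<m$. Lemma~\ref{exterior} with $n=1$ gives $\dim\Lambda^{k}(W)^{U^{opp}}=\sum_{0\le i\le\min\{m-1,k\},\ p\mid(k-i)}\binom{m}{i}+\varepsilon$, where $\varepsilon=1$ if $k\ge m$ and $\varepsilon=0$ otherwise. When $k<m$ the displayed equality is impossible, since $\dim\Lambda^{k}(W)$ is then neither $0$ nor a multiple of $2^{m}$; when $k\ge m$ it holds if and only if $\dim\Lambda^{k}(W)^{U^{opp}}=1$, i.e. if and only if none of the $m$ consecutive integers $k,k-1,\dots,k-m+1$ is divisible by $p$, which is exactly the condition $k=ps+r$ with $s\ge 0$ and $m\le r<p$. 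This settles the equivalence.

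Assume now $k=ps+r$ with $m\le r<p$. Then $\Lambda^{k}(W)^{U^{opp}}$ is one-dimensional and, by Lemma~\ref{exterior}, equals the summand $\Lambda^{m}(W_0)\otimes S^{k-m}(W_1)$, i.e. the one-dimensional $G_{ev}$-supermodule of weight $(1^{m}\mid k-m)$ and parity $a=k-m\bmod 2$. By Remark~\ref{asinjective} a good filtration of $\Lambda^{k}(W)$ induces one of $\Lambda^{k}(W)^{U^{opp}}$ with the same number of layers, so that number is $1$ and the unique layer is $H^{0}((1^{m}\mid k-m)^{a})$; hence $\Lambda^{k}(W)\simeq H^{0}((1^{m}\mid k-m)^{a})$.

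Finally I would show that this induced supermodule is irreducible. Since $W$ is an irreducible $G$-supermodule and the duality $M\mapsto M^{<t>}$ preserves $T$-characters, $W^{<t>}\simeq W$, hence $\Lambda^{k}(W)^{<t>}\simeq\Lambda^{k}(W)$; together with $H^{0}(\mu^{a})^{<t>}=V(\mu^{a})$ this yields
\[
H^{0}((1^{m}\mid k-m)^{a})\ \simeq\ \Lambda^{k}(W)\ \simeq\ \Lambda^{k}(W)^{<t>}\ \simeq\ V((1^{m}\mid k-m)^{a}).
\]
An isomorphism $V(\nu)\simeq H^{0}(\nu)$ forces $V(\nu)=L(\nu)=H^{0}(\nu)$, for otherwise the common module would be non-simple with $L(\nu)$ occurring both in its socle and in its head while $[V(\nu):L(\nu)]=1$, which is impossible. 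Thus $\Lambda^{k}(W)\simeq H^{0}((1^{m}\mid k-m)^{a})=L((1^{m}\mid k-m)^{a})=V((1^{m}\mid k-m)^{a})$, and, being simultaneously a costandard and a standard object, it is tilting. (One may alternatively deduce irreducibility by applying Marko's criterion from \cite{marko} to the weight $(1^{m}\mid k-m)$, whose value on the odd root $\epsilon_{i}-\epsilon_{m+1}$, after adding $\rho$, is $k+1-i$, so that irreducibility of $H^{0}$ holds under precisely the same numerical condition.) The step I expect to need the most care is the last one — upgrading $\Lambda^{k}(W)\simeq H^{0}((1^{m}\mid k-m)^{a})$ to $H^{0}=L=V$ — together with the correct bookkeeping of parities and of the (two) indecomposable injectives of $U^{opp}$ in the numerical reduction.
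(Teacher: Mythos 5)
Your argument for the equivalence is correct and runs somewhat differently from the paper's. You feed Corollary~\ref{whengood} into Theorem~\ref{goodfiltr}, use Lemma~\ref{injandproj2} to convert the remaining condition into injectivity of $\Lambda^k(W)|_{U^{opp}}$, and decide that by the clean numerical test $\dim\Lambda^k(W)=2^m\dim\Lambda^k(W)^{U^{opp}}$ coming from the injective-hull description of Lemma~\ref{injandproj}; this settles both directions at once. The paper does not invoke Theorem~\ref{goodfiltr} here: for necessity it uses Remark~\ref{asinjective} and the divisibility $2^m\mid\dim\Lambda^k(W)$ to force $k\ge m$ and $\Lambda^k(W)|_{U^{opp}}\simeq\Pi^aK[U^{opp}]$, then gets the congruence condition and $\Lambda^k(W)\simeq H^0((1^m|k-m)^a)$ from Lemma~\ref{exterior}; for sufficiency it argues by hand, locating the unique $Dist(B^{opp})^+$-annihilated vector $w_1\cdots w_mw_{m+1}^{k-m}$, so that $L((1^m|k-m)^a)\subseteq\Lambda^k(W)$, and concluding by a dimension count once Marko's criterion shows this simple equals $H^0$ of dimension $2^m$. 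Your identification of $\Lambda^k(W)$ with $H^0((1^m|k-m)^a)$ via the one-dimensional invariants and Remark~\ref{asinjective} agrees with the paper; your converse, routed through the general criterion of Section~8, is a legitimate (and arguably more systematic) alternative to the paper's explicit computation.

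The one step you must not leave as written is the deduction ``$W^{<t>}\simeq W$, hence $\Lambda^k(W)^{<t>}\simeq\Lambda^k(W)$''. The duality $M\mapsto M^{<t>}$ does not commute with super-exterior or symmetric powers in characteristic $p$: $\Lambda^k(W)$ is a quotient of $W^{\otimes k}$, so its $<t>$-dual is a subsupermodule of $W^{\otimes k}$ of divided-power type (already for $S^p(W_1)$ with $\dim W_1\ge 2$ the dual is $\Gamma^p(W_1)\not\simeq S^p(W_1)$), and equality of characters, which is all that survives for $n=1$, does not give an isomorphism of supermodules. Moreover, once you know $\Lambda^k(W)\simeq H^0((1^m|k-m)^a)$, the assertion $\Lambda^k(W)^{<t>}\simeq\Lambda^k(W)$ is literally equivalent to $V((1^m|k-m)^a)\simeq H^0((1^m|k-m)^a)$, i.e.\ to the very irreducibility you are after, so it needs an independent argument rather than a formal ``hence''. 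Your parenthetical alternative is the correct fix and is exactly the paper's argument: by Theorem~1 of \cite{marko}, $H^0((1^m|k-m))$ is irreducible because $((1^m|k-m)+\rho,\epsilon_i-\epsilon_{m+1})=k+1-i$ for $1\le i\le m$ runs over $k,k-1,\dots,k-m+1$, which are prime to $p$ precisely under your numerical condition; then $H^0=L=V$ and the supermodule is tilting. Your closing observation that $V(\nu)\simeq H^0(\nu)$ forces simplicity (simple socle, simple head, multiplicity one of $L(\nu)$) is fine once that input is supplied.
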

\begin{proof}
Assume that $\Lambda^k(W)$ has a good filtration.
By Lemma \ref{injandproj}, $\dim\Lambda^k(W)$ is a multiple of $\dim K[U^{opp}]=2^m$.
Thus $k\geq m$, $\dim\Lambda^k(W)=\dim\Lambda^m(W)=2^m$ and $\Lambda^k(W)|_{U^{opp}}\simeq \Pi^a K[U^{opp}]$ for $a=0,1$. 

Since $K[U^{opp}]^{U^{opp}}=K$ and $\Lambda^k(V)$ has a unique highest weight $(1^m|k-m)$, Lemma \ref{exterior} and Remark \ref{asinjective} imply that 
$k=ps+r$, where $s\geq 0, m\leq r < p$ and 
$\Lambda^k(W)\simeq H^0((1^m| k-m)^a)$ for $a= k-m \pmod 2$. Moreover, by Theorem 1 of \cite{marko}, $H^0((1^m|k-m))$
is irreducible, hence $H^0((1^m|k-m)^a)=L((1^m|k-m)^a)=V((1^m|k-m)^a)$ is tilting. In fact, $((1^m|k-m)+\rho, \epsilon_i-\epsilon_{m+1})=k+1-i$ is not divided by $p$ for any $i$. 

Conversely, let $L(\pi^b)$ be a simple subsupermodule of $\Lambda^k(W)$, where 
$k=ps+r, s\geq 0$ and $m\leq r < p$. Then its highest weight vector is annihilated 
by $Dist(B^{opp})^+$. Since there is only one such vector  $w_1\ldots w_m w_{m+1}^{k-m}$ of weight $\pi=(1^m|k-m)$, we get
$L((1^m|k-m)^a)\subseteq\Lambda^m(W)$. As it has already been observed, $L((1^m|k-m))=H^0((1^m|k-m))$
and this supermodule has the dimension $2^m$. Thus $\Lambda^k(W)\simeq H^0((1^m|k-m)^a)$.
\end{proof}
Since $U^{opp}$ is abelian, any subsupergroup of $U^{opp}$ is normal. 
Let $N$ be a subsupergroup of $U^{opp}$ that is defined by the equations $c_{i, m+1}=0$ for $1\leq i\leq m$ . In other words,
an element $g\in U^{opp}$ belongs to $N$ if and only if $gw_{m+1}=w_{m+1}$. Let $W'$ denote the subsuperspace $\sum_{1\leq i\leq m+n, i\neq m+1}Kw_i$ of $W$.
Then
$$\Lambda^k(W)|_N\simeq \bigoplus_{0\leq t\leq k }\Lambda^{k-t}(W').$$
If $n>1$ and $\Lambda^k(W)$ is an injective $U^{opp}$-supermodule, 
then every $\Lambda^s(W')$ for $0\leq s\leq k,$ is injective as a $N$-supermodule by Lemma \ref{reduction}. On the other hand,
every (finite-dimensional) injective $N$-supermodule has a dimension that is divided by $2^{m(n-1)}\geq 2$. Thus $\Lambda^0(W')=K$ can not be injective as a $N$-supermodule. This contradiction shows that the only case when  
$\Lambda^k(W)$ is injective as a $U^{opp}$-supermodule is $n=1$.
The following proposition is now evident.
\begin{pr}\label{decreasingof}
$\Lambda^k(W)$ has a good filtration if and only if $n=1$ and $k=ps+r$,
where $s\geq 0$ and $m\leq r < p$. 
\end{pr}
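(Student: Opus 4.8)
The plan is to invoke Theorem~\ref{goodfiltr}, which reduces the existence of a decreasing good filtration on $\Lambda^k(W)$ to the conjunction of two conditions: that the $G_{ev}$-supermodule $\Lambda^k(W)^{U^{opp}}$ has a good filtration, and that $H^1(U^{opp},\Lambda^k(W))=0$. By Lemmas~\ref{injandproj} and~\ref{injandproj2} the second condition is equivalent to $\Lambda^k(W)|_{U^{opp}}$ being an injective (hence free, up to a parity shift) $U^{opp}$-supermodule. Corollary~\ref{whengood} already records that the first condition holds exactly when $n=1$ or $k=ps+r$ with $s\geq 0$ and $m\leq r<p$; so everything comes down to deciding, for these values of $k$, whether $\Lambda^k(W)$ is injective over $U^{opp}$.

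When $n=1$ this is settled by Lemma~\ref{n=1}: there $\Lambda^k(W)$ has a good filtration precisely when $k=ps+r$, and it then equals the irreducible tilting supermodule $H^0((1^m|k-m)^a)$. Thus it remains to show that for $n>1$ the restriction $\Lambda^k(W)|_{U^{opp}}$ is never injective; by Theorem~\ref{goodfiltr} this rules out a good filtration for every such $k$, regardless of whether $\Lambda^k(W)^{U^{opp}}$ has one.

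For the key step I would pass to the subsupergroup $N\leq U^{opp}$ defined by $c_{i,m+1}=0$ for $1\leq i\leq m$, i.e.\ the stabilizer of $w_{m+1}$; it is normal since $U^{opp}$ is abelian. Writing $W'=\sum_{i\neq m+1}Kw_i$ and using that $w_{m+1}$ is odd (so $S^t(Kw_{m+1})$ is one-dimensional for every $t\geq 0$, with $N$ acting trivially), one obtains an isomorphism of $N$-supermodules $\Lambda^k(W)|_N\simeq\bigoplus_{0\leq t\leq k}\Lambda^{k-t}(W')$, in which the summand for $t=k$ is the trivial module $K$. If $\Lambda^k(W)$ were injective over $U^{opp}$, then by Lemma~\ref{reduction} its restriction to $N$ would be injective over $N$, hence so would each direct summand, in particular $K$. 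But $N$ is a finite unipotent supergroup with $\dim K[N]=2^{m(n-1)}\geq 2$ when $n>1$, so by Lemma~\ref{injandproj} every finite-dimensional injective $N$-supermodule has dimension divisible by $2^{m(n-1)}$ — which the one-dimensional module $K$ is not. This contradiction finishes the case $n>1$, and collecting the three cases yields the stated criterion.

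I expect the last paragraph to be the crux. The subtlety is that for $n>1$ the fixed-point condition of Corollary~\ref{whengood} can still be met (whenever $k=ps+r$), so the obstruction must lie entirely in the injectivity/$H^1$ condition; detecting it cleanly requires the auxiliary subgroup $N$, chosen so that restriction exposes a one-dimensional summand, together with the divisibility of the dimension of every injective $N$-supermodule by $\dim K[N]>1$.
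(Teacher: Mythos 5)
Your proposal is correct and follows essentially the same route as the paper: reduce via Theorem~\ref{goodfiltr} (with Lemmas~\ref{injandproj} and~\ref{injandproj2}) to injectivity of $\Lambda^k(W)|_{U^{opp}}$, handle $n=1$ by Lemma~\ref{n=1}, and for $n>1$ use the stabilizer $N$ of $w_{m+1}$, the decomposition $\Lambda^k(W)|_N\simeq\bigoplus_{0\leq t\leq k}\Lambda^{k-t}(W')$, Lemma~\ref{reduction}, and the divisibility of dimensions of injective $N$-supermodules by $2^{m(n-1)}$ to reach the same contradiction. This is exactly the argument preceding Proposition~\ref{decreasingof} in the paper.
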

In the previous notation, $e_{ji}\cdot (w_+^I w_-^{\beta})\neq 0$ if and only if $i\in I$. In this case 
$e_{ji}\cdot (w_+^I w_-^{\beta})=\pm w_+^{I\setminus i} w_-^{\beta'}$, where $\beta'_j=\beta_j +1$ and 
$\beta'_k=\beta_k$ for $k\neq j$. Thus $Dist(U)^+\cdot\Lambda^k(W)$ is spanned by all $w_+^I w_-^{\beta}$ with
$I^{\sharp} < m$ and $|\beta|\geq 1$. Therefore, 
$\Lambda^k(W)/\Lambda^k(W)_U\simeq\Lambda^m(W_0)\otimes S^{k-m}(W_1)$ provided $k > m$, and 
$\Lambda^k(W)/\Lambda^k(W)_U\simeq \Lambda^k(W_0)$ otherwise.
\begin{pr}\label{Weylfiltrforfactor}
The $G$-supermodule $\Lambda^k(W)$ has a Weyl filtration if and only if $n=1$ and $k=ps+r$,
where $s\geq 0$ and $m\leq r < p$.
\end{pr}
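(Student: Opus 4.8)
The plan is to mirror the proof of Proposition \ref{decreasingof}, with Theorem \ref{Weylfiltr} playing the role of Theorem \ref{goodfiltr}. By Theorem \ref{Weylfiltr}, $\Lambda^k(W)$ has a Weyl filtration if and only if the $G_{ev}$-supermodule $\Lambda^k(W)/\Lambda^k(W)_U$ has a Weyl filtration and $H_1(U,\Lambda^k(W))=0$. For the first condition I would use the computation made just before the statement: $\Lambda^k(W)/\Lambda^k(W)_U$ is (up to a parity shift) $\Lambda^m(W_0)\otimes S^{k-m}(W_1)$ when $k>m$ and $\Lambda^k(W_0)$ when $k\le m$. In either case this is a tensor product of an exterior power of the natural $GL(m)$-module with a symmetric power of the natural $GL(n)$-module, and such a module is costandard over $G_{ev}$; hence it has a Weyl filtration exactly when it is irreducible. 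When $n=1$ it is one-dimensional, so this is automatic; when $n\ge 2$ it imposes restrictions on $k$, but these will be moot in view of the second condition. So the real content is to decide when $H_1(U,\Lambda^k(W))=0$.

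By Lemma \ref{injandproj2}, $H_1(U,\Lambda^k(W))=0$ is equivalent to $\Lambda^k(W)|_U$ being $U$-projective, and since $U$ is infinitesimal this is in turn equivalent, by Lemma \ref{projandinjoverinfinitesimal}, to $\Lambda^k(W)|_U$ being $U$-injective. Now I would run the dimension argument of the paragraph preceding the statement, but with $U$ in place of $U^{opp}$: since $U$ is abelian, the subsupergroup $N\le U$ cut out by $c_{m+1,i}=0$ for $1\le i\le m$ is normal, $W'=\sum_{i\ne m+1}Kw_i$, and $\Lambda^k(W)|_N\cong\bigoplus_{0\le t\le k}\Lambda^{k-t}(W')$. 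If $n\ge 2$ and $\Lambda^k(W)$ were $U$-injective, then by Lemma \ref{reduction} each $\Lambda^s(W')$ would be an injective $N$-supermodule, while a nonzero injective $N$-supermodule has dimension divisible by $\dim K[N]=2^{m(n-1)}\ge 2$; this is impossible for $\Lambda^0(W')=K$. Hence $n=1$.

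It then remains to treat $n=1$, and I expect this to be the main obstacle. Here I would transfer the question to the already-settled $U^{opp}$-side: by Proposition \ref{duality} one has $H_1(U,\Lambda^k(W))^{<t>}\simeq H^1(U^{opp},\Lambda^k(W)^{<t>})$, so $\Lambda^k(W)|_U$ is $U$-projective if and only if $\Lambda^k(W)^{<t>}|_{U^{opp}}$ is $U^{opp}$-injective; after identifying $\Lambda^k(W)^{<t>}$ explicitly and repeating the fixed-point count of Lemma \ref{exterior} as in Lemma \ref{n=1} and Proposition \ref{decreasingof}, one is left exactly with the weights $k=ps+r$, $s\ge 0$, $m\le r<p$. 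For such $k$ one moreover gets $\Lambda^k(W)\cong V((1^m\,|\,k-m)^a)=H^0((1^m\,|\,k-m)^a)=L((1^m\,|\,k-m)^a)$ with $a\equiv k-m\pmod 2$, which is tilting and in particular has a Weyl filtration, yielding the converse as well. The delicate point is precisely this last reduction: one must pin down $\Lambda^k(W)|_U$ (equivalently $\Lambda^k(W)^{<t>}|_{U^{opp}}$) closely enough to see that $U$-projectivity forces $k\bmod p\ge m$ — this is the only place where the characteristic $p$ intervenes and where the argument is computational rather than purely formal.
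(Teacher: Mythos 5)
Your first two steps are correct and essentially the paper's: the reduction via Theorem \ref{Weylfiltr} (the paper invokes Remark \ref{proj} directly), and the elimination of $n\geq 2$ by restricting to a subsupergroup $N$ of $U$ and using that nonzero injective $N$-supermodules have dimension divisible by $2^{m(n-1)}$ (the paper cuts $U$ by $c_{m+n,i}=0$ rather than $c_{m+1,i}=0$; the difference is immaterial). The genuine gap is exactly where you flag it, and it is not merely an omitted computation: the proposed transfer to the $U^{opp}$-side cannot produce the congruence on $k$. By Proposition \ref{duality}, what controls $H_1(U,\Lambda^k(W))$ is the module $\Lambda^k(W)^{<t>}$, which is \emph{not} isomorphic to $\Lambda^k(W)$ unless one already knows irreducibility, so ``repeating the fixed-point count of Lemma \ref{exterior}'' is not available; the invariants on that side correspond to the $U$-coinvariants $\Lambda^k(W)/\Lambda^k(W)_U$, and for $n=1$, $k\geq m$ these are one-dimensional for \emph{every} $k$, so no condition modulo $p$ emerges from this count.

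Indeed, carrying your reduction through honestly shows that for $n=1$ and any $k\geq m$ both conditions of Theorem \ref{Weylfiltr} hold: by the formula $e_{ji}\cdot(w_+^I w_-^{\beta})=\pm w_+^{I\setminus i}w_-^{\beta'}$ the operators $e_{m+1,i}$ act with coefficients $\pm 1$, so $\Lambda^k(W)$ is a free $Dist(U)$-module of rank one generated by $w_1\cdots w_m w_{m+1}^{k-m}$ (hence $H_1(U,\Lambda^k(W))=0$), while $\Lambda^k(W)/\Lambda^k(W)_U\simeq \Lambda^m(W_0)\otimes S^{k-m}(W_1)$ is the one-dimensional module $V_{ev}((1^m|k-m))$. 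Equivalently, $w_1\cdots w_m w_{m+1}^{k-m}$ is a $B^{opp}$-primitive vector of weight $(1^m|k-m)$ which generates $\Lambda^k(W)$, and $\dim\Lambda^k(W)=2^m=\dim V((1^m|k-m))$, so $\Lambda^k(W)\simeq V((1^m|k-m)^a)$ for every $k\geq m$; for instance, for $GL(1|1)$ and $k=p$ one gets the nonsplit extension of $K_{(1|p-1)}$ by $K_{(0|p)}$, which \emph{is} the Weyl module even though $k\equiv 0\pmod p$. So $U$-projectivity does not force $k\bmod p\geq m$, and your route cannot close the gap; note that this also collides with the paper's own argument, whose only source of the congruence is the assertion that the primitive vector yields $L((1^m|k-m)^a)\subseteq\Lambda^k(W)$ --- a primitive vector only gives an epimorphic image of $V((1^m|k-m)^a)$, not a simple submodule (when $k\equiv i\pmod p$ for some $i<m$ there are further primitive vectors, e.g. $w_1\cdots w_i w_{m+1}^{k-i}$, and the socle is not $L((1^m|k-m)^a)$). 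The congruence $m\leq k\bmod p<p$ is where $\Lambda^k(W)$ is in addition irreducible, i.e. also has a good filtration (Lemma \ref{n=1}, Proposition \ref{decreasingof}); for the Weyl filtration alone your reduction only delivers $n=1$ and $k\geq m$, so the stated equivalence cannot be proved along the lines you propose.
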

\begin{proof}
Assume that $\Lambda^k(W)$ has a Weyl filtration. Then $\Lambda^k(W)$ is a projective $U$-supermodule, hence injective by Lemma \ref{projandinjoverinfinitesimal}. Let $N$ be a 
subsupergroup of $U$ that is defined by the equations $c_{m+n, i}=0$ for $1\leq i\leq m$. Then
$$\Lambda^k(W)|_N\simeq \bigoplus_{0\leq t\leq k}\Lambda^{k-t}(W'),$$
where $W'=\sum_{1\leq i\leq m+n-1} Kw_i$. Arguing as before, $n=1$ and $k\geq m$ is the only case when $\Lambda^k(W)$ can be an injective, hence projective $U$-supermodule. 
Remark \ref{proj} implies that $\Lambda^k(W)$ has to be isomorphic to
$V((1^m| k-m)^a)$ for $a=k-m\pmod 2$. Furthermore, since the vector $w_1\ldots w_m w_{m+1}^{k-m}$ is annihilated by $Dist(B^{opp})^+$, it implies that
$L((1^m| k-m)^a)\subseteq \Lambda^k(W)\simeq V((1^m| k-m)^a)$. On the other hand, $L((1^m| k-m)^a)$ is the top of $V((1^m| k-m)^a)$.
Since $V((1^m| k-m)^a)$ is indecomposable, $L((1^m| k-m)^a)=V((1^m| k-m)^a)=H^0((1^m| k-m)^a)$. Theorem 1 from \cite{marko}
implies that $k=ps+r$, where $s\geq 0$ and $m\leq r < p$. Lemma \ref{n=1} concludes the proof.
\end{proof}

Any basis element of $S^k(W)$ has the form 
$w_+^{\gamma}w_-^J=w_1^{\gamma_1}\ldots w_m^{\gamma_m}
w_{j_1}\ldots w_{j_s}$, where $\gamma_1, \ldots,\gamma_m\geq 0$ and $m+1\leq j_1 <\ldots < j_s\leq m+n$. 
As we have seen earlier,
$
e_{ij}\cdot (w_+^{\gamma}
w_-^J)\neq 0$ if and only if there is an index $t$ such that $j=j_t$. In this case 
$$e_{ij}\cdot (w_+^{\gamma}
w_-^J)=(-1)^{t-1}
w_1^{\gamma_1}\ldots w_i^{\gamma_i+1}\ldots w_m^{\gamma_m}
w_-^{J\setminus \{j_t\}}.
$$
Analogously, we have
$$
e_{ji}\cdot (w_+^{\gamma}
w_-^J)=\gamma_i w_1^{\gamma_1}\ldots w_i^{\gamma_i-1}\ldots w_m^{\gamma_m}
w_j w_-^J.
$$
In particular, $S^k(W)^{U^{opp}}=S^k(W_0)=H_{ev}^0((k| 0^n))$. 

If $N=Stab_{U^{opp}}(W')$, where
$W'=\sum_{1\leq i\leq m+n, i\neq m} Kw_i$, then
$$S^k(W)|_N\simeq\bigoplus_{0\leq t\leq k} S^{k-t}(W').
$$
Again, if $S^k(W)$ is an injective $U^{opp}$-supermodule, then $m=1$ and $k\geq n$.
Analogously, if $S^k(W)$ is a projective $U$-supermodule, then it is also injective. 
Working with the subsupergroup of $U$ defined by the equations $c_{jm}=0$ for $m+1\leq j\leq m+n$,
we derive that this is possible only if $m=1$ and $k\geq n$.
\begin{pr}\label{symmasofgood}
$S^k(W)$ has a good filtration if and only if $m=1$ and $n\leq k < p$. In this case
$S^k(W)= H^0((k|0^n))=L((k|0^n))=V((k|0^n))$ is tilting.
\end{pr}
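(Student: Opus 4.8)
The plan is to feed the explicit descriptions of the $Dist(U^{opp})$- and $Dist(U)$-actions on $S^k(W)$ obtained just above into Theorem~\ref{goodfiltr}, exactly in the spirit of the proofs of Lemma~\ref{n=1} and Proposition~\ref{Weylfiltrforfactor}. The key preliminary remark is that $S^k(W)^{U^{opp}}=S^k(W_0)=H^0_{ev}((k|0^n))$ is a single costandard $G_{ev}$-module, so it automatically has a good filtration; hence Theorem~\ref{goodfiltr} reduces the problem entirely to deciding when $S^k(W)|_{U^{opp}}$ is injective, equivalently when $H^1(U^{opp},S^k(W))=0$ (Lemma~\ref{injandproj2}).

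For the ``only if'' direction I would invoke the argument already indicated in the paragraph preceding the statement: restricting to the (normal, since $U^{opp}$ is abelian) subsupergroup $N=Stab_{U^{opp}}(W')$ with $W'=\sum_{i\ne m}Kw_i$, one has $S^k(W)|_N\simeq\bigoplus_{0\le t\le k}S^{k-t}(W')$, so by Lemma~\ref{reduction} injectivity of $S^k(W)|_{U^{opp}}$ forces the trivial summand $S^0(W')=K$ to be injective over $N$; since a nonzero injective $N$-supermodule has dimension divisible by $2^{(m-1)n}$, this gives $m=1$, and then comparing $\dim S^k(W)=\sum_{b\le\min\{n,k\}}\binom{n}{b}$ with $\dim K[U^{opp}]=2^n$ forces $k\ge n$. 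The remaining constraint $k<p$ I would extract from the Marko criterion, as explained below.

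For the ``if'' direction, assume $m=1$ and $n\le k<p$. Then $\dim S^k(W)=2^n=\dim K[U^{opp}]$, while $S^k(W)^{U^{opp}}=Kw_1^k$ is one-dimensional; since $Dist(U^{opp})$ is (up to a parity twist) an exterior algebra, its socle on any supermodule equals its fixed points, so $S^k(W)|_{U^{opp}}$ has simple socle and therefore $S^k(W)|_{U^{opp}}\simeq\Pi^aK[U^{opp}]$ with $a\equiv k-n\pmod 2$; in particular it is injective and, by Theorem~\ref{goodfiltr} together with the first remark above, $S^k(W)$ has a good filtration. Moreover $S^k(W)$ has unique highest weight $(k|0^n)$ (occurring with multiplicity one), all of its weights are $\le(k|0^n)$, and — again because the $U^{opp}$-socle is one-dimensional and contains $w_1^k$ — its $G$-socle is the simple supermodule $L((k|0^n))$; combined with $\dim S^k(W)=\dim H^0((k|0^n))$ and Remark~\ref{asinjective} this yields $S^k(W)\simeq H^0((k|0^n))$. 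Finally, applying Theorem~1 of \cite{marko} as in the proof of Lemma~\ref{irreducible} — i.e. computing $((k|0^n)+\rho,\alpha)$ for each positive odd root $\alpha=\epsilon_1-\epsilon_j$, using that for $m=1$ the weight $\rho_0$ vanishes in the first coordinate — one checks that $H^0((k|0^n))$ is irreducible in the range $n\le k<p$, so there $S^k(W)=H^0((k|0^n))=L((k|0^n))=V((k|0^n))$, and an irreducible costandard module is automatically tilting.

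The step I expect to be the main obstacle is this last one: the numerical bookkeeping in the Marko-criterion computation, where one runs $\alpha$ over all $n$ positive odd roots, obtains $n$ consecutive integers as the values $((k|0^n)+\rho,\alpha)$, and must verify that their simultaneous non-divisibility by $p$ is exactly the condition $n\le k<p$. This is the analogue of the hook-length style calculation at the end of the proof of Lemma~\ref{n=1}, and it is where the precise form of the answer is pinned down; everything else is a routine application of Theorem~\ref{goodfiltr}, Lemma~\ref{reduction} and the dimension counts already assembled above.
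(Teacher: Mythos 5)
Your reduction to $m=1$, $k\geq n$ and your ``if'' direction are fine and essentially the paper's own argument: the restriction to $N=Stab_{U^{opp}}(W')$ plus Lemma \ref{reduction} gives $m=1$, divisibility of $\dim S^k(W)$ by $2^n$ gives $k\geq n$, and for $n\leq k<p$ the simple $U^{opp}$-socle, the dimension count $2^n$, Theorem \ref{goodfiltr} and Theorem 1 of \cite{marko} give $S^k(W)\simeq H^0((k|0^n))=L((k|0^n))=V((k|0^n))$, hence tilting. The genuine gap is exactly the step you flagged: the necessity of $k<p$, which you propose to extract from Marko's criterion. That cannot work. Once $m=1$ and $k\geq n$, your own analysis shows, for \emph{every} such $k$, that $S^k(W)|_{U^{opp}}\simeq \Pi^aK[U^{opp}]$ and $S^k(W)^{U^{opp}}=Kw_1^k=H^0_{ev}((k|0^n))$ (each $e_{1j}$ acts with coefficient $\pm1$, so no divisibility by $p$ ever enters), whence Theorem \ref{goodfiltr} and Remark \ref{asinjective} force $S^k(W)\simeq H^0((k|0^n))$ independently of $p$; and a costandard supermodule has a good filtration whether or not it is irreducible, so irreducibility (typicality) is not a consequence of having a good filtration and cannot produce a bound on $k$. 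The numerology is also off: the values $((k|0^n)+\rho,\epsilon_1-\epsilon_{1+j})$ are the $n$ consecutive integers $k,k-1,\ldots,k-n+1$ (compare the computation in Lemma \ref{n=1}), and their joint indivisibility by $p$ is strictly weaker than $n\leq k<p$ (take $k=p+n$ with $n<p$).

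Note that the paper obtains $k<p$ by a different device, not via \cite{marko}: writing $k=ps+r$ with $s>0$, it exhibits a subsuperspace $W'$ of $S^k(W)$ claimed to be a proper subsupermodule not containing $w_1^k$, contradicting the simplicity of the socle of $H^0((k|0^n))$. You should scrutinize that step rather than defer to Marko's theorem: since every $e_{1j}$ acts with coefficient $\pm1$, any nonzero subsupermodule of $S^k(W)$ contains $w_1^k$ (apply the operators $e_{1j}$, $j\in J$, to any monomial $w_1^{k-J^{\sharp}}w_-^J$ it contains), so it is far from clear that such a $W'$ exists; for instance, when $m=n=1$ and $k=p$ one has $e_{21}\cdot w_1^p=p\,w_1^{p-1}w_2=0$ and $e_{12}\cdot w_1^{p-1}w_2=w_1^p$, which gives $S^p(W)\simeq H^0((p|0))$, a module that trivially has a good filtration. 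So the bound $k<p$ is the truly delicate point of the whole statement, and your proposal as written leaves it unproved.
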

\begin{proof}
If $S^k(W)$ has a good filtration, then $m=1$ and $k\geq n$. Remark \ref{asinjective} implies that
$S^k(W)\simeq H^0((k|0^n))$. Let $k=ps +r$, where $s\geq 0$ and $0\leq r < p$. Assume that 
$s > 0$. Then the subsuperspace 
$$W'=\sum_{J^{\sharp}=r-t, 0\leq t\leq r} Kw_1^{ps+t}w_-^{J}$$ is a proper subsupermodule in $S^k(W)$. 
But the highest weight vector $w_1^k$ does not belong to $W'$, which is a contradiction. Thus $s=0$ and 
$H^0((k|0^n))$ is irreducible by Theorem 1 of \cite{marko}. Conversely,  
the vector $w_1^k$ of the unique highest weight $(k|0^n)$ generates a simple subsupermodule of $S^k(W)$
that is isomorphic to $L((k|0^n))$.
If $m=1$ and $n\leq k < p$, then $H^0((k|0^n))=L((k|0^n))$ has the dimension $2^n$, and the proof is concluded.
\end{proof}
\begin{pr}\label{weylofsymm}
$S^k(W)$ has a Weyl filtration if and only if $m=1$ and $n\leq k< p$. 
\end{pr}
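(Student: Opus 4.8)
The plan is to run the same argument as in the proof of Proposition~\ref{Weylfiltrforfactor} (the exterior--power analogue), feeding it the structural facts about $S^k(W)$ that were already assembled in the discussion preceding Proposition~\ref{symmasofgood}. In particular I will not need any new computation with $Dist(U^{opp})$ or $Dist(U)$ beyond what is recorded there; the whole ``only if'' direction reduces to combining injectivity over a unipotent kernel with a dimension count.

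The ``if'' direction is immediate: when $m=1$ and $n\le k<p$, Proposition~\ref{symmasofgood} already gives $S^k(W)=H^0((k|0^n))=L((k|0^n))=V((k|0^n))$, which is a standard object and so certainly admits a Weyl filtration. For the converse, suppose $S^k(W)$ has a Weyl filtration. By Theorem~\ref{Weylfiltr} (equivalently, Remark~\ref{proj}) the restriction $S^k(W)|_U$ is a projective $U$-supermodule, hence by Lemma~\ref{projandinjoverinfinitesimal} it is also injective over $U$. As explained just before Proposition~\ref{symmasofgood} (restrict to the subsupergroup of $U$ defined by $c_{jm}=0$ for $m+1\le j\le m+n$ and compare dimensions), an injective $U$-supermodule can be isomorphic to $S^k(W)$ only if $m=1$ and $k\ge n$.

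Now $m=1$ and $k\ge n$ force $\dim S^k(W)=\sum_{0\le i\le n}\binom{n}{i}=2^n=\dim Dist(U)$, so by Remark~\ref{proj} the Weyl filtration of $S^k(W)$ has a single layer; as the unique highest weight is $(k|0^n)$ with even highest weight vector $w_1^k$, this gives $S^k(W)\simeq V((k|0^n))$. To identify this with $H^0((k|0^n))$ I would argue as in Proposition~\ref{Weylfiltrforfactor}: for $m=1$ we have $S^k(W)^{U^{opp}}=S^k(W_0)=K$ (computed before Proposition~\ref{symmasofgood}), and since $U^{opp}$ is a subsupergroup of $V^{opp}$ the space $S^k(W)^{V^{opp}}$ is nonzero and at most one--dimensional; hence $soc_G S^k(W)$ is simple, necessarily $L((k|0^n))$. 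Thus $S^k(W)\simeq V((k|0^n))$ has simple socle and simple top, both $\cong L((k|0^n))$, which occurs with composition multiplicity one in $V((k|0^n))$, and an elementary length argument forces $S^k(W)=L((k|0^n))=H^0((k|0^n))$. In particular $S^k(W)$ then has a good filtration, so Proposition~\ref{symmasofgood} yields $m=1$ and $n\le k<p$, completing the proof.

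I expect the only load--bearing step to be the reduction to $m=1$ and $k\ge n$ via injectivity of $S^k(W)|_U$, together with the dimension count that collapses the Weyl filtration to a single standard layer; everything after that is bookkeeping borrowed from the proofs of Propositions~\ref{symmasofgood} and~\ref{Weylfiltrforfactor}, and the ``if'' direction costs nothing.
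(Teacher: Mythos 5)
Your reductions are sound as far as they go, and they do follow a genuinely different route from the paper's: the paper never passes through irreducibility of $S^k(W)$, but instead computes $S^k(W)_U$ explicitly (it is spanned by the $w_1^tw_-^J$ with $J^{\sharp}\geq 1$ and $p\nmid (t+1)$), uses projectivity of $S^k(W)|_U$ to force $\dim S^k(W)/S^k(W)_U=1$, and tries to read the bound $k<p$ off that computation; you instead show $S^k(W)\simeq V((k|0^n))$ has simple socle equal to its simple top, conclude $S^k(W)=L((k|0^n))=H^0((k|0^n))$ (the identification with $H^0$ via the duality $M\mapsto M^{<t>}$ is fine), and then quote Proposition \ref{symmasofgood}. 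Up to that last citation your argument is correct, and it is cleaner than the paper's in that it avoids the computation of $S^k(W)_U$ altogether.

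The genuine problem is that the entire numerical bound $n\leq k<p$ is carried by the appeal to Proposition \ref{symmasofgood}, and that is exactly the point which cannot be taken on faith. What your argument (and equally the paper's dimension count $\dim S^k(W)/S^k(W)_U=1$) actually forces is that no integer in $\{k-n+1,\dots,k\}$ is divisible by $p$, i.e. $k=ps+r$ with $n\leq r<p$; this is strictly weaker than $k<p$, and the difference is real. Take $m=1$, $n=2$, $p=5$, $k=7$: then $S^7(W)$ has basis $w_1^7,\ w_1^6w_2,\ w_1^6w_3,\ w_1^5w_2w_3$, the odd lowering operators act on $w_1^7$ with coefficients $7\equiv 2$ and $7\cdot 6\equiv 2 \pmod 5$, so $Dist(U)w_1^7=S^7(W)$ and $S^7(W)$ is a quotient of $V((7|0,0))$ of full dimension $4$; hence $S^7(W)\simeq V((7|0,0))=L((7|0,0))=H^0((7|0,0))$ has a Weyl (and good) filtration although $k\geq p$. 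So the ``only if'' direction as printed fails, and delegating it to Proposition \ref{symmasofgood} does not repair anything, since the same example contradicts that proposition as well (in its proof the subspace $W'$ does contain $w_1^k$, namely as the term with $t=r$, so the intended contradiction is not obtained). The statement your line of argument can actually establish is: $S^k(W)$ has a Weyl filtration if and only if $m=1$ and $k=ps+r$ with $s\geq 0$ and $n\leq r<p$, in exact analogy with Proposition \ref{Weylfiltrforfactor}; if you want to keep your structure, you should replace the citation of Proposition \ref{symmasofgood} by the observation that $S^k(W)\simeq V((k|0^n))$ forces $Dist(U)w_1^k=S^k(W)$, which yields precisely this congruence condition.
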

\begin{proof}
The conditions $m=1$ and $k\geq n$ are necessary for $S^k(W)$ to have a Weyl filtration.
Furthermore, we have 
$$S^k(W)_U=\sum_{p\not|(t+1), 0\leq t\leq k-1, k=t+J^{\sharp}} Kw_1^t w_-^J .$$
This implies
$$
S^k(W)/S^k(W)_U\simeq S^k(W_0)\bigoplus (\sum_{1\leq l\leq [\frac{k+1}{p}], pl-1+J^{\sharp}=k, J^{\sharp}\geq 1} Kw_1^{pl-1}w_-^J).
$$
As above, $S^k(W)|_U\simeq \Pi^b Dist(U)$, where $b=0,1$. Therefore $\dim S^k(W)/S^k(W)_U=1$, hence $k < p$ and $S^k(W)\simeq V((k|0^n))=L((k|0^n))=H^0((k|0^n))$ is tilting. The converse statement follows by Proposition \ref{symmasofgood}. 
\end{proof}

\end{document}